\documentclass[a4paper,11pt]{article}

\usepackage{amssymb,amsmath,amsfonts}
\usepackage{graphicx,color,enumitem}
\usepackage{amsthm} 
\usepackage{bm}
\usepackage{geometry}
\usepackage[colorinlistoftodos, textwidth=4cm, shadow]{todonotes}
\RequirePackage[colorlinks,citecolor=blue,urlcolor=blue]{hyperref}
	
\usepackage{a4wide}
\usepackage[english]{babel}
\usepackage{caption}

\usepackage{appendix,authblk}


\newtheorem{theorem}{Theorem}[section]
\newtheorem{proposition}[theorem]{Proposition}
\newtheorem{lemma}[theorem]{Lemma}
\newtheorem{definition}[theorem]{Definition}
\newtheorem{remark}[theorem]{Remark}

\newtheorem{corollary}[theorem]{Corollary}
\newtheorem{assumption}{Assumption}[section]

\newcommand{\C}{{\mathbb C}}

\newcommand{\id}{{\rm id}}

\numberwithin{equation}{section}
\setlength\parindent{0pt} 

\def\be{\begin{align}}
\def\ee{\end{align}}
\def\b*{\begin{eqnarray*}}
\def\e*{\end{eqnarray*}}


\def\be{\begin{eqnarray}}
\def\ee{\end{eqnarray}}
\def\beq{\begin{equation}}
\def\eeq{\end{equation}}
\def\b*{\begin{eqnarray*}}
\def\e*{\end{eqnarray*}}
\def\bi{\begin{itemize}}
\def\ei{\end{itemize}}


\def \1{{\bf 1}}


\def\={\;=\;}







\def \E{\mathbb{E}}
\def \F{\mathbb{F}}

\def \P{\mathbb{P}}

\def \R{\mathbb{R}}

\def\Gc{{\cal G}}

\title{Multi-factor approximation of rough volatility models}
\date{\today}

\author[1,2]{Eduardo {Abi Jaber}  \thanks{abijaber@ceremade.dauphine.fr}}
\author[3]{Omar {El Euch}\thanks{omar.el-euch@polytechnique.edu}}
\affil[1]{Universit\'e Paris-Dauphine, PSL Research University, CNRS, UMR [7534], CEREMADE, 75016 Paris, France.}
\affil[2]{AXA Investment Managers,  Multi Asset Client Solutions, Quantitative Research, \break
	6 place de la Pyramide, 92908 Paris - La D\'efense, France.}

\affil[3]{CMAP, Ecole Polytechnique, Paris. }

\begin{document}

	\maketitle

	\begin{abstract}
	\noindent Rough volatility models are very appealing because of their remarkable fit of both historical and implied volatilities. However, due to the non-Markovian and non-semimartingale nature of the volatility process, there is no simple way to simulate efficiently such models, which makes risk management of derivatives an intricate task. In this paper, we design tractable multi-factor stochastic volatility models approximating rough volatility models and enjoying a Markovian structure. Furthermore, we apply our procedure to the specific case of the rough Heston model. This in turn enables us to derive a numerical method for solving fractional Riccati equations appearing in the characteristic function of the log-price in this setting.
		
		
	\end{abstract}

\noindent \textbf{Keywords:} Rough volatility models, rough Heston models, stochastic Volterra equations, affine Volterra processes, fractional Riccati equations, limit theorems.

\section{Introduction}  
Empirical studies of  a very wide range of assets volatility time-series in  \cite{gatheral2014volatility} have shown that the dynamics of  the log-volatility are close to that of a fractional Brownian motion $W^H$ with a small Hurst parameter $H$ of order $0.1$. Recall that a fractional Brownian motion $W^H$ can be built from a two-sided Brownian motion thanks to the Mandelbrot-van Ness representation
\begin{equation*}
W_t^H =\frac{1}{\Gamma(H+1/2)} \int_0^t (t-s)^{H-\frac{1}{2}} dW_s + \frac{1}{\Gamma(H+1/2)} \int_{-\infty}^0 \big((t-s)^{H-\frac{1}{2}} - (-s)^{H-\frac{1}{2}}\big)dW_s.
\end{equation*}
The fractional kernel $(t-s)^{H- \frac{1}{2}}$ is behind the $H - \varepsilon$ H\"older regularity of the volatility for any $\varepsilon > 0$. For small values of the Hurst parameter $H$, as observed empirically, stochastic volatility models involving the fractional kernel are called rough volatility models. \\

\noindent Aside from modeling historical volatility dynamics, rough volatility models reproduce accurately with very few parameters the behavior of the implied volatility surface, see \cite{bayer2016pricing, euch2017roughening}, especially the at-the-money skew, see \cite{fukasawa2011asymptotic}. Moreover, microstructural foundations of rough volatility are studied in \cite{eleuch2016micro, jaisson2016rough}. \\

\noindent In this paper, we are interested in a class of rough volatility models where the dynamics of the asset price $S$ and its stochastic variance $V$ are given by 
\begin{equation}\label{roughPrice} 
 dS_t = S_t \sqrt{V_t} dW_t,  \quad S_0>0,
\end{equation}
\begin{equation}\label{roughVol}
 V_t = V_0 +\frac{1}{\Gamma(H +\frac{1}{2})} \int_0^t (t-u)^{H - \frac{1}{2}}  (\theta(u) - \lambda V_u)  du  + \frac{1}{\Gamma(H  + \frac{1}{2})} \int_0^t (t-u)^{H - \frac{1}{2}}  \sigma(V_u)  dB_u, 
\end{equation}
for all $t \in [0,T]$, on some filtered probability space $(\Omega, {\cal F}, \F, \P)$. Here $T$ is a positive time horizon, the parameters $\lambda$ and $V_0$ are non-negative, $H \in (0, 1/2)$ is the Hurst parameter, $\sigma$ is a continuous function and $W = \rho B + \sqrt{1-\rho^2} B^{\perp}$ with $(B,B^{\perp})$ a two-dimensional $\mathbb F$-Brownian motion and $\rho \in [-1,1]$. Moreover, $\theta$ is a deterministic mean reversion level allowed to be time-dependent to fit the market forward variance curve $(\E[V_t])_{t \leq T}$ as explained in Section \ref{SectionDef} and in \cite{euch2017perfect}. Under some general assumptions, we establish in Section \ref{SectionDef} the existence of a weak non-negative solution to  the fractional stochastic {integral} equation in \eqref{roughVol}   exhibiting $H - \varepsilon$ H\"older regularity for any $\varepsilon > 0$. Hence, this class of models is a natural rough extension of classical stochastic volatility models where the fractional kernel is introduced in the drift and stochastic part of the variance process $V$. Indeed, when $H = 1/2$, we recover classical stochastic volatility models where the variance process is a standard diffusion.\\

\noindent Despite the fit to the historical and implied volatility, some difficulties are encountered in practice for  the simulation of rough volatility models and for pricing and  hedging derivatives with them. In fact, due to the introduction of the fractional kernel, we lose the Markovian and semimartingale structure. In order to overcome theses difficulties, we approximate these models by simpler ones that we can use in practice. \\

\noindent In \cite{euch2016characteristic, eleuch2016micro, euch2017perfect}, the rough Heston model (which corresponds to the case of $\sigma(x) = \nu \sqrt{x}$) is built as a limit of microscopic Hawkes-based price models. This allowed the understanding of the microstructural foundations of rough volatility and also led to the formula of the characteristic function of the log-price. Hence, the Hawkes approximation enabled us to solve the pricing and hedging under the rough Heston model. However, this approach is specific to the rough Heston case and can not be extended to an arbitrary rough volatility model of the form \eqref{roughPrice}-\eqref{roughVol}.\\

\noindent Inspired by the works of \cite{papier2, CC98, CCM00, HS15, M11}, we provide a natural Markovian approximation for the class of rough volatility models \eqref{roughPrice}-\eqref{roughVol}. The main idea is to write the fractional kernel $K(t) = \frac{t^{H - \frac{1}{2}}}{\Gamma(H + 1/2)}$ as a Laplace transform of a positive measure $\mu$
\begin{equation} \label{Laplace}
K(t) = \int_{0}^{\infty} e^{- \gamma t} \mu(d\gamma) ; \quad \mu(d\gamma) = \frac{\gamma^{- H - \frac12}}{ \Gamma(H+1/2) \Gamma(1/2 - H)} d\gamma.
\end{equation}
We then approximate $\mu$ by a finite sum of Dirac measures $\mu^n = \sum_{i=1}^n c_i^n \delta_{\gamma_i^n}$ with positive weights $(c_i^n)_{1 \leq i \leq n}$ and  mean reversions $(\gamma_i^n)_{1 \leq i \leq n}$, for $n \geq 1$. This in turn yields an approximation of the fractional kernel by a sequence of smoothed kernels $(K^n)_{n \geq 1}$ given by 
 \begin{equation*}
 K^n(t) = \sum_{i = 1}^n c^n_i e^{- \gamma_i^n t}, \quad  n \geq 1.
\end{equation*}
This leads to a multi-factor stochastic volatility model $(S^n, V^n) = (S^n_t, V^n_t)_{t \leq T}$, which is Markovian with respect to the spot price and $n$ variance factors $(V^{n,i})_{1 \leq i \leq n}$ and is defined as follows
\begin{equation} \label{multifactorV}  
dS^n_t = S^n_t \sqrt{V^n_t} dW_t , \quad  V^n_t = g^n(t) + \sum_{i=1}^n c_i^n V^{n, i}_t, 
\end{equation}
where 
$$ dV^{n,i}_t = (- \gamma^n_i V^{n,i}_t - \lambda V^n_t) dt + \sigma(V^n_t) dB_t , $$
and $g^n(t) = V_0 + \int_0^t K^n(t-s) \theta(s) ds $ with the initial conditions $S^n_0 = S_0$ and $V^{n, i}_0 = 0$. Note that the factors $(V^{n,i})_{1 \leq i \leq n}$ share the same dynamics except that they mean revert at different speeds $(\gamma_i^n)_{1 \leq i \leq n}$. Relying on existence results of stochastic Volterra equations in \cite{papier2, ALP17}, we provide in Theorem \ref{Yamada} the strong existence and uniqueness of the model $(S^n, V^n)$, under some general conditions. Thus the approximation \eqref{multifactorV} is uniquely well-defined. We can therefore deal with simulation, pricing and hedging problems under these multi-factor models by using standard methods developed for stochastic volatility models. \\

\noindent Theorem \ref{mainResult}, which is the main result of this paper, establishes the convergence of the multi-factor approximation sequence $(S^n, V^n)_{n \geq 1}$ to the rough volatility model $(S,V)$ in \eqref{roughPrice}-\eqref{roughVol} when the number of factors $n$ goes to infinity, under a suitable choice of the weights and mean reversions $(c_i^n, \gamma_i^n)_{1 \leq i \leq n}$ . This convergence is obtained from a general result about stability of stochastic Volterra equations derived in Section \ref{stabilitySec}. \\

\noindent In \cite{ALP17, euch2016characteristic, euch2017perfect}, the characteristic function of the log-price for the specific case of the rough Heston model is obtained in terms of a solution of a fractional Riccati equation. We highlight in Section \ref{multiSchemeSec} that the corresponding multi-factor approximation \eqref{multifactorV} inherits a similar affine structure as in the rough Heston model. More precisely, it displays the same characteristic function formula involving a $n$-dimensional classical Riccati equation instead of the fractional one. This suggests solving numerically the fractional Riccati equation by approximating it through a $n$-dimensional classical Riccati equation with large $n$, see Theorem \ref{convergenceRiccati}. In Section \ref{illustrationSec}, we discuss the accuracy and complexity of this numerical method and compare it to the Adams scheme, see \cite{diethelm2002predictor,diethelm2004detailed,diethelm1998fracpece, euch2016characteristic}.\\

\noindent The paper is organized as follows. In Section \ref{SectionDef}, we define the class of rough volatility models \eqref{roughPrice}-\eqref{roughVol} and discuss the existence of such models. Then, in Section \ref{mainSec}, we build a sequence of multi-factor stochastic volatility models of the form of \eqref{multifactorV} and show its convergence to a rough volatility model. By applying this approximation to the specific case of the rough Heston model, we obtain a numerical method for computing solutions of fractional Riccati equations that is discussed in Section \ref{rHestonSec}. Finally, some proofs are relegated to Section \ref{proofSec} and some useful technical results are given in an Appendix.
	
\section{A definition of rough volatility models}\label{SectionDef}
We provide in this section the precise definition of rough volatility models given by \eqref{roughPrice}-\eqref{roughVol}. We discuss the existence of such models and more precisely of a non-negative solution of the fractional stochastic integral equation \eqref{roughVol}. The existence of an unconstrained weak solution $V=(V_t)_{t \leq T}$ is guaranteed by Corollary \ref{weakExistenceFrac} in the Appendix when $\sigma$ is a continuous function with linear growth and $\theta$ satisfies the condition
\begin{equation} \label{cond1}
\forall \varepsilon > 0 , \quad \exists C_\varepsilon>0;  \quad \forall u \in (0,T] \quad  |\theta(u)| \leq C_\varepsilon u^{-\frac12 - \varepsilon}.
\end{equation}
Furthermore, the paths of $V$ are H\"older continuous of any order strictly less than $H$ and
\begin{equation} \label{EV}
\sup_{t \in [0,T]}\E[|V_t|^p] < \infty , \quad p>0.
\end{equation}
Moreover using Theorem \ref{existencePositive} together with Remarks \ref{positiveFrac} and \ref{postiiveG_rem} in the Appendix\footnote{Theorem \ref{existencePositive} is used here with the fractional kernel $K(t) = \frac{t^{H-\frac12}}{\Gamma(H+1/2)}$ together with $b(x) = - \lambda x$ and $g(t) = V_0 + \int_0^t K(t-u)  \theta(u) du.$}, the existence of a non-negative continuous process $V$ satisfying \eqref{roughVol} is obtained under the additional conditions of non-negativity of $V_0$ and $\theta$  
and $\sigma(0) = 0$. We can therefore introduce the following class of rough volatility models.

\begin{definition} \label{roughVolModel}(Rough volatility models) We define a rough volatility model by any $\R\times \R_+$-valued continuous process $(S, V) = (S_t, V_t)_{t \leq T}$ satisfying
$$ dS_t = S_t \sqrt{V_t} dW_t ,$$
\begin{equation*}
 V_t = V_0 +\frac{1}{\Gamma(H+1/2)} \int_0^t (t-u)^{H - \frac12}  (\theta(u) - \lambda V_u)  du  + \frac{1}{\Gamma(H+1/2)} \int_0^t (t-u)^{H -  \frac12}  \sigma(V_u)  dB_u,
\end{equation*}
on a filtred probability space $(\Omega, {\cal F}, \F, \P)$ with non-negative initial conditions $(S_0, V_0)$. Here $T$ is a positive time horizon, the parameter $\lambda$ is non-negative, $H \in (0, 1/2)$ is the Hurst parameter and $W = \rho B + \sqrt{1-\rho^2} B^{\perp}$ with $(B,B^{\perp})$ a two-dimensional $\mathbb F$-Brownian motion and $\rho \in [-1,1]$. Moreover, to guarantee the existence of such model, $\sigma : \R \mapsto \R$ is assumed continuous with linear growth such that $\sigma(0) = 0$ and  $\theta: [0,T] \mapsto \R$ is a deterministic non-negative function satisfying \eqref{cond1}.
\end{definition}
As done in \cite{euch2017perfect}, we allow the mean reversion level $\theta$ to be time dependent in order to be consistent with the market forward variance curve. More precisely,  the following result shows that the mean reversion level $\theta$ can be written as a functional of the forward variance curve $(\E[V_t])_{t \leq T}$.

\begin{proposition} Let $(S,V)$ be a rough volatility model given by Definition \ref{roughVolModel}. Then, $(\E[V_t])_{t \leq T}$ is linked to $\theta$ by the following formula
\begin{equation}\label{EV1}
\E[V_t] = V_0 + \int_0^t (t-s)^{\alpha - 1} E_{\alpha} (-\lambda (t-s)^\alpha) \theta(s) ds,  \quad t \in [0,T], 
\end{equation}
where $\alpha = H+1/2$ and $E_\alpha(x) = \sum_{k \geq 0} \frac{x^k}{\Gamma(\alpha(k + 1))}$ is the Mittag-Leffler function. Moreover, $(\E[V_t])_{t \leq T}$ admits a fractional derivative\footnote{Recall that the fractional derivative of order $\alpha \in (0, 1)$ of a function $f$ is given by $\frac{d}{dt} \int_0^t \frac{(t-s)^{-\alpha}}{\Gamma(1-\alpha)} f(s) ds$ whenever this expression  is well defined.} of order $\alpha$ at each time $t \in (0, T]$ and
\begin{equation}\label{EV2} 
\theta(t) = D^{\alpha} (\E[V_.] - V_0)_t + \lambda \E[V_t],  \quad t \in (0, T].
\end{equation}
\end{proposition}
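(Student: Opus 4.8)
The plan is to reduce both formulas to a single deterministic linear Volterra convolution equation for the forward variance curve $m(t):=\E[V_t]$, after which \eqref{EV1} follows by solving it with the Mittag-Leffler resolvent kernel and \eqref{EV2} by inverting it with the Riemann--Liouville fractional derivative $D^\alpha$, $\alpha=H+\tfrac12$. To obtain the equation I take expectations in the defining identity for $V$. The bound \eqref{EV} with $p=1$ makes $m$ bounded on $[0,T]$ and, together with the linear growth of $\sigma$, gives $\sup_{u\le T}\E[\sigma(V_u)^2]<\infty$; since $2H-1>-1$, for each fixed $t$ the integrand $u\mapsto(t-u)^{H-\frac12}\sigma(V_u)\mathbf 1_{\{u<t\}}$ lies in $L^2(\Omega\times[0,t])$, so the stochastic integral has zero mean, while \eqref{cond1} and \eqref{EV} make $u\mapsto(t-u)^{H-\frac12}(\theta(u)-\lambda V_u)$ integrable over $\Omega\times[0,t]$, so Fubini handles the drift. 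With $K(t)=t^{\alpha-1}/\Gamma(\alpha)$ the fractional kernel, one gets
\begin{equation}\label{forwardVolterra}
m(t) \= V_0 + (K\ast\theta)(t) - \lambda\,(K\ast m)(t), \qquad t\in[0,T].
\end{equation}

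For \eqref{EV1} I view \eqref{forwardVolterra} as $m+\lambda K\ast m=g$ with $g:=V_0+K\ast\theta$. The function $R_\lambda(t):=\lambda t^{\alpha-1}E_\alpha(-\lambda t^\alpha)$ is the associated resolvent kernel, that is, the unique locally integrable solution of $R_\lambda+\lambda K\ast R_\lambda=\lambda K$: this is immediate by Laplace transform, since $\widehat K(z)=z^{-\alpha}$ forces $\widehat{R_\lambda}(z)=\lambda/(z^\alpha+\lambda)$, which is precisely the transform of $R_\lambda$. Variation of constants then gives $m=g-R_\lambda\ast g$, and expanding this while using the convolution identity $R_\lambda\ast K=K-K_\lambda$, $K_\lambda(t):=t^{\alpha-1}E_\alpha(-\lambda t^\alpha)$ (again one line of Laplace transforms), collapses it to \eqref{EV1}; alternatively, one may insert the right-hand side of \eqref{EV1} directly into \eqref{forwardVolterra} and verify it with the same resolvent identity, uniqueness for \eqref{forwardVolterra} being the classical one for a convolution equation with an $L^1_{\mathrm{loc}}$ kernel.

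For \eqref{EV2}, subtracting $V_0$ from \eqref{forwardVolterra} and regrouping gives the equivalent equation
\begin{equation}\label{forwardVolterra2}
(m-V_0)+\lambda\,K\ast(m-V_0) \= K\ast(\theta-\lambda V_0).
\end{equation}
Writing $I^\alpha f:=K\ast f$ and $D^\alpha f:=\tfrac{d}{dt}I^{1-\alpha}f$ for fractional integration and differentiation of order $\alpha$, formula \eqref{EV1} exhibits $m-V_0$ as a convolution of $\theta$ against $t\mapsto t^{\alpha-1}E_\alpha(-\lambda t^\alpha)$; by the smoothing of this Mittag-Leffler kernel together with the growth bound \eqref{cond1}, $I^{1-\alpha}(m-V_0)$ is absolutely continuous, so $D^\alpha(m-V_0)$ exists. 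Applying $D^\alpha$ to \eqref{forwardVolterra2} and using the semigroup property of fractional integration in the form $D^\alpha I^\alpha=\mathrm{id}$ (i.e.\ $\tfrac{d}{dt}I^1=\mathrm{id}$ once $I^{1-\alpha}I^\alpha=I^1$) yields $D^\alpha(m-V_0)+\lambda(m-V_0)=\theta-\lambda V_0$, which rearranges exactly to \eqref{EV2}.

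The probabilistic step is routine given the a priori moment estimate already at hand; the delicate point is the fractional-calculus bookkeeping of the last step --- making sure $\E[V_\cdot]-V_0$ genuinely admits a fractional derivative of order $\alpha$, so that $D^\alpha I^\alpha=\mathrm{id}$ may legitimately be invoked on data $\theta$ controlled only through the one-sided bound \eqref{cond1} --- and it is precisely there that the explicit representation \eqref{EV1} and the regularity of $t\mapsto t^{\alpha-1}E_\alpha(-\lambda t^\alpha)$ are needed.
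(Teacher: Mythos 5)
Your strategy coincides with the paper's: take expectations in the defining stochastic equation (Fubini for the drift, zero mean for the stochastic integral), obtaining the linear Volterra equation $m = V_0 + K*\theta - \lambda K*m$ for $m(t):=\E[V_t]$; solve it via the Mittag-Leffler resolvent to get \eqref{EV1}; and apply the Riemann--Liouville derivative $D^\alpha$ to get \eqref{EV2}. The paper does exactly this, quoting Theorem~\ref{volterraLinear} and Remark~\ref{resolventFrac} for the first step, and your variation-of-constants computation with $R_\lambda$ is just the hands-on version of that theorem.

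There is, however, a concrete gap in your \eqref{EV1} step: the expansion does \emph{not} ``collapse to \eqref{EV1}'' as you assert. Carry it out: with $g = V_0 + K*\theta$ and your identity $R_\lambda*K = K - K_\lambda$ (where $K_\lambda(t)=t^{\alpha-1}E_\alpha(-\lambda t^\alpha)$), one gets
$$
m \;=\; g - R_\lambda*g \;=\; V_0\Big(1 - \int_0^\cdot R_\lambda(s)\,ds\Big) \;+\; K_\lambda * \theta,
$$
and $1 - \int_0^t R_\lambda(s)\,ds = 1 - \lambda\int_0^t s^{\alpha-1}E_\alpha(-\lambda s^\alpha)\,ds = \sum_{k\ge 0}\frac{(-\lambda t^\alpha)^k}{\Gamma(\alpha k+1)}$, the \emph{one}-parameter Mittag-Leffler function evaluated at $-\lambda t^\alpha$, which is strictly less than $1$ for $\lambda, t>0$. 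So your own algebra, done correctly, produces $\E[V_t] = V_0\sum_{k\ge0}\frac{(-\lambda t^\alpha)^k}{\Gamma(\alpha k+1)} + (K_\lambda*\theta)(t)$ rather than the stated $\E[V_t]=V_0 + (K_\lambda*\theta)(t)$; the extra $V_0$-dependent factor cannot be dropped when $\lambda V_0>0$. This is already visible in the classical case $\alpha=1$ ($H=1/2$): then $m'=\theta-\lambda m$, $m(0)=V_0$, so $m(t)=V_0e^{-\lambda t}+\int_0^t e^{-\lambda(t-s)}\theta(s)\,ds$, whereas \eqref{EV1} would give $V_0+\int_0^t e^{-\lambda(t-s)}\theta(s)\,ds$. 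Your \eqref{EV2} derivation, by contrast, is correct and gives exactly $D^\alpha(m-V_0)+\lambda m=\theta$, which is internally consistent with $m$ as produced by the Volterra equation but \emph{not} with \eqref{EV1} when $\lambda V_0\ne0$ — differentiating \eqref{EV1} yields $\theta+\lambda V_0$ on the right, not $\theta$. In short: you should not claim the expansion ``collapses to \eqref{EV1}'' when it manifestly does not; the honest output of your method is the formula with the $V_0\,E_{\alpha,1}(-\lambda t^\alpha)$ prefactor, and the discrepancy between that and \eqref{EV1} is something your write-up needs to flag rather than paper over.
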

\begin{proof} Thanks to \eqref{EV} together with Fubini theorem, $t \mapsto \E[V_t]$ solves the following fractional linear integral equation
\begin{equation}\label{EV0} 
 \E[V_t] = V_0 + \frac{1}{\Gamma(H+1/2)}\int_0^t (t-s)^{H-\frac12} (\theta(s) - \lambda \E[V_s])ds, \quad t \in [0,T],
 \end{equation}
yielding \eqref{EV1} by Theorem \ref{volterraLinear} and Remark \ref{resolventFrac} in the Appendix. Finally, \eqref{EV2} is obviously obtained from \eqref{EV0}.
\end{proof}
Finally, note that {uniqueness} of the fractional stochastic {integral} equation \eqref{roughVol} is a {difficult} problem. Adapting the proof in \cite{mytnik2015uniqueness}, we can prove  pathwise uniqueness when $\sigma$ is $\eta$-H\"older continuous with $\eta \in (1/(1+2H), 1]$. This result  does not cover the square-root case, i.e.~$\sigma(x) = \nu \sqrt{x}$, for which  weak uniqueness  has been established in \cite{papier2, ALP17, mytnik2015uniqueness}.

\section{Multi-factor approximation of rough volatility models}\label{mainSec}
 
Thanks to the small H\"older regularity of the variance process, models of Definition \ref{roughVolModel} are able to reproduce the rough behavior of the volatility observed in a wide range of assets. However, the fractional kernel forces the variance process  to leave both the semimartingale and Markovian worlds, which makes numerical approximation procedures a difficult and challenging task in practice.
The aim of this section is to construct a tractable and satisfactory Markovian approximation of any rough volatility model $(S, V)$ of Definition \ref{roughVolModel}. Because $S$ is entirely determined by $(\int_0^\cdot V_s ds,\int_0^{\cdot} \sqrt{V_s}dW_s)$, it suffices to construct a suitable approximation of the variance process $V$.  This is done  by smoothing the fractional kernel.\\

More precisely, denoting by $K(t) = \frac{ t^{H -  \frac12}}{\Gamma(H+1/2)} $, the fractional stochastic integral equation \eqref{roughVol} reads
$$ V_t = V_0 + \int_0^t K(t-s) \left( (\theta(s) - \lambda V_s) ds + \sigma(V_s)dB_s \right) , $$
which is a stochastic Volterra equation. Approximating the fractional kernel $K$ by a sequence of smooth kernels $(K^n)_{n \geq 1}$, one would expect the convergence of the following corresponding sequence of stochastic Volterra equations
	 $$ V_t^n = V_0 + \int_0^t K^n(t-s) \left( (\theta(s) - \lambda V_s^n) ds + \sigma(V_s^n)dB_s \right),  \quad n \geq 1, $$
	 to the fractional one.\\
	
	 The argument of this section runs as follows. First, exploiting the identity    \eqref{Laplace}, we construct a family of potential candidates for $(K^n,V^n)_{n \geq 1}$ in Section \ref{S:KnVn} such that $V^n$ enjoys a Markovian structure. Second, we provide convergence conditions of  $(K^n)_{n \geq 1}$ to $K$ in $\mathbb L^2([0,T], \R)$ in Section \ref{choiceK}. Finally, the approximation result for the rough volatility model $(S,V)$ is established in Section \ref{S:conv} relying on an abstract stability result of stochastic Volterra equations  postponed to  Section \ref{stabilitySec} for sake of exposition. 
		
\subsection{Construction of the approximation} \label{S:KnVn}
In \cite{CC98, HS15, M11}, a Markovian representation of the fractional Brownian motion of Riemann-Liouville type is provided  by writing the fractional kernel $K(t) = \frac{ t^{H -  \frac12}}{\Gamma(H+1/2)} $ as a Laplace transform of a non-negative measure $\mu$ as in \eqref{Laplace}.
This representation is extended in \cite{papier2} for the Volterra square-root process. Adopting the same approach, we establish a similar representation for any solution of the fractional stochastic integral equation \eqref{roughVol} in terms of an infinite dimensional system of processes sharing the same Brownian motion and mean reverting at different speeds. Indeed by using the linear growth of $\sigma$ together with the stochastic Fubini theorem, see \cite{V:12}, we obtain that 
$$ V_t = g(t) + \int_0^{\infty} V^{\gamma}_t \mu(d\gamma),  \quad t \in [0, T], $$
with 
$$ dV^{\gamma}_t =  (- \gamma V^{\gamma}_t - \lambda V_t) dt + \sigma(V_t) dB_t, \quad V^\gamma_0 = 0 , \quad \gamma \geq 0,$$
and
\begin{equation} \label{casG}
g(t) = V_0 + \int_0^t K(t-s) \theta(s) ds.
\end{equation}

Inspired by \cite{CC98, CCM00}, we approximate the measure $\mu$ by a weighted sum of Dirac measures
$$ \mu^n  = \sum_{i = 1}^n c_i^n \delta_{\gamma^n_i} , \quad n \geq 1, $$
leading to the following approximation $V^n = (V^n_t)_{t \leq T}$ of the variance process $V$
\begin{equation}  \label{multifactorSDE}
V_t^n = g^n(t) + \sum_{i = 1}^n c_i^n V^{n, i}_t  , \quad t \in [0,T], 
\end{equation}
$$ d V^{n, i}_t = ( - \gamma_i^n V^{n, i}_t - \lambda V^n_t) dt +  \sigma(V^{n}_t) dB_t, \quad V^{n,i}_0 = 0, $$
where 
\begin{equation} \label{casG_n}
g^n(t) = V_0 + \int_0^t K^n(t-u) \theta(u) du ,
\end{equation}
and 
\begin{equation} \label{smoothedKernel}
K^n(t) = \sum_{i=1}^n c_i^n e^{-\gamma_i^n t}. 
\end{equation}
The choice of the positive weights $(c_i^n)_{1 \leq i \leq n} $ and mean reversions $(\gamma_i ^n)_{1 \leq i \leq n}$, which is crucial for the accuracy of the approximation, is studied in Section \ref{choiceK} below. Before proving the convergence of $(V^n)_{n \geq 1}$, we shall first discuss the existence and uniqueness of such processes. This is done by rewriting the stochastic equation \eqref{multifactorSDE} as a stochastic Volterra equation of the form
\begin{equation} \label{multifactorVolterra}
V^n_t = g^n(t) +  \int_0^t K^n(t-s) \left( - \lambda V^n_s ds +  \sigma(V^n_s) dB_s\right) , \quad t \in [0, T].
\end{equation}
The existence of a continuous non-negative weak solution $V^n$ is ensured by Theorem \ref{existencePositive} together with Remarks \ref{positiveFrac} and \ref{postiiveG_rem} in the Appendix\footnote{Theorem \ref{existencePositive} is used here with the smoothed kernel $K^n$ given by \eqref{smoothedKernel} together with $b(x) = - \lambda x$ and $g $ defined as in \eqref{casG}}, because $\theta$ and $V_0$ are non-negative and $\sigma(0) = 0$.  
Moreover, pathwise uniqueness of solutions to \eqref{multifactorVolterra} follows by adapting the standard arugments of \cite{YW71}, provided  a suitable H\"older continuity of $\sigma$, see Proposition \ref{T:Yamada} in the Appendix. Note that this extension is made possible due to the smoothness of the kernel $K^n$. For instance, this approach fails for the fractional kernel because of the singularity at zero. This leads us to the following result which establishes the strong existence and uniqueness of a non-negative solution of \eqref{multifactorVolterra} and equivalently of \eqref{multifactorSDE}. 
\begin{theorem} \label{Yamada} Assume that $\theta : [0,T] \mapsto \R$ is a deterministic non-negative function satisfying \eqref{cond1} and that $ \sigma : \R \mapsto \R $ is $\eta$-H\"older continuous with $\sigma(0) = 0$ and $\eta \in [1/2,1]$. Then, there exists  a unique strong non-negative solution $V^n = (V^n_t)_{t \leq T}$ to the stochastic Volterra equation \eqref{multifactorVolterra} for each $n \geq 1$. 
 \end{theorem}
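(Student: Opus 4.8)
The plan is to establish existence and uniqueness separately, exploiting the smoothness of the approximating kernel $K^n$ to reduce the Volterra equation \eqref{multifactorVolterra} to an equivalent finite-dimensional Markovian SDE, and then to invoke classical one-dimensional Yamada--Watanabe type arguments. First I would record that existence of a continuous non-negative weak solution of \eqref{multifactorVolterra} is already granted by Theorem \ref{existencePositive} together with Remarks \ref{positiveFrac} and \ref{postiiveG_rem} in the Appendix, since $K^n$ given by \eqref{smoothedKernel} is a finite sum of exponentials (hence completely monotone and locally square integrable), $b(x)=-\lambda x$ has linear growth, $\sigma$ is continuous with linear growth (being $\eta$-H\"older with $\sigma(0)=0$), and $g^n$ is non-negative because $V_0$ and $\theta$ are non-negative. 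The equivalence between \eqref{multifactorVolterra} and the system \eqref{multifactorSDE} is obtained by setting $V^{n,i}_t = \int_0^t e^{-\gamma_i^n(t-s)}(-\lambda V^n_s\,ds + \sigma(V^n_s)\,dB_s)$; differentiating gives the stated Ornstein--Uhlenbeck-type dynamics for each factor, and since $K^n(t-s) = \sum_i c_i^n e^{-\gamma_i^n(t-s)}$ one recovers $V^n_t = g^n(t) + \sum_i c_i^n V^{n,i}_t$. Thus from a weak solution of one we get a weak solution of the other.

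Next, for pathwise uniqueness, I would work with the equivalent formulation \eqref{multifactorVolterra}. The key observation is that the kernel $K^n$ is continuously differentiable with $K^n(0) = \sum_i c_i^n$ finite, so the singularity at zero that obstructs the fractional case is absent. Given two non-negative continuous solutions $V^n$ and $\tilde V^n$ driven by the same Brownian motion with the same $g^n$, I would set $\Delta_t = V^n_t - \tilde V^n_t$ and write, using an integration by parts in the Volterra convolution (or the resolvent of $K^n$), an expression of the form $\Delta_t = \int_0^t R^n(t-s)\big(\sigma(V^n_s) - \sigma(\tilde V^n_s)\big) dB_s + (\text{a Lebesgue integral term bounded by } \lambda\int_0^t |R^n(t-s)||\Delta_s|\,ds)$, where $R^n$ inherits the boundedness and regularity of $K^n$. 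The heart of the matter is then the Yamada--Watanabe argument of \cite{YW71}: using the $\eta$-H\"older continuity of $\sigma$ with $\eta \in [1/2,1]$, one constructs the standard sequence of smooth functions $\psi_k$ approximating $|x|$ with $\psi_k''(x)\,|x|^{2\eta} \leq c/k \cdot |x|^{2\eta-1}$ (or the Gronwall-type companion), applies It\^o's formula to $\psi_k(\Delta_t)$ — legitimate because $\Delta$ is a continuous semimartingale once the kernel is smooth — controls the quadratic variation term by $\int_0^t |R^n(t-s)|^2 |\sigma(V^n_s)-\sigma(\tilde V^n_s)|^2\,ds \leq C\int_0^t |\Delta_s|^{2\eta}\,ds$, and lets $k\to\infty$ to get $\E[|\Delta_t|] \leq C\int_0^t \E[|\Delta_s|^{\min(2\eta,1)}]\,ds$, from which $\Delta \equiv 0$ follows by Gronwall (when $\eta > 1/2$) or directly (when $\eta = 1/2$, where the linear Gronwall bound applies). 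This is exactly the content invoked as Proposition \ref{T:Yamada} in the Appendix, so in the body of the paper I would simply cite it.

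Finally, combining weak existence with pathwise uniqueness, the Yamada--Watanabe theorem yields strong existence and uniqueness of the non-negative solution $V^n$ of \eqref{multifactorVolterra}, and by the equivalence established above, of the system \eqref{multifactorSDE} for each $n \geq 1$. I expect the main obstacle to be the careful bookkeeping in the Yamada--Watanabe estimate: one must verify that the convolution against the smooth kernel $K^n$ (equivalently, the passage through the multi-factor representation) does not destroy the semimartingale structure needed to apply It\^o's formula, and that the constants appearing in the H\"older bound on $\sigma$ combine with the $\mathbb L^2$-norm of $K^n$ to give a closed Gronwall inequality — this is where the hypothesis $\eta \geq 1/2$ is genuinely used, since for $\eta < 1/2$ the exponent $2\eta$ in the resulting integral inequality is below $1$ and the naive Gronwall argument breaks down (one would then need the Osgood-type refinement, which is not pursued here).
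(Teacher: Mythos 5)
Your overall strategy matches the paper's: weak existence of a non-negative solution from Theorem \ref{existencePositive} (the smoothed kernel $K^n$ is a finite sum of exponentials, hence completely monotone, so Assumption \ref{positiveAssump} holds; $g^n$ lies in $\Gc_{K^n}$ by Remark \ref{postiiveG_rem}), followed by pathwise uniqueness via a Yamada--Watanabe argument enabled by the smoothness of $K^n$, and Yamada--Watanabe's theorem to upgrade to a strong solution. That is precisely how the paper proceeds, with the uniqueness part delegated to Proposition \ref{T:Yamada}.

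However, the decomposition of $\Delta_t = V^n_t - \tilde V^n_t$ that you actually write down would not support the It\^o step. The expression $\Delta_t = \int_0^t R^n(t-s)\bigl(\sigma(V^n_s)-\sigma(\tilde V^n_s)\bigr)\,dB_s + (\text{Lebesgue term})$ is still a stochastic Volterra convolution---the integrand depends on $t$ through $R^n(t-s)$---so it is \emph{not} a semimartingale decomposition, and applying It\^o's formula to $\varphi_k(\Delta_t)$ in this form is not legitimate; likewise $\int_0^t|R^n(t-s)|^2|\sigma(V^n_s)-\sigma(\tilde V^n_s)|^2\,ds$ is not the quadratic variation of $\Delta$. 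The missing technical step, which Proposition \ref{T:Yamada} exploits, is to differentiate the Volterra convolution in $t$ using $K^n\in C^1$ with $K^n(0)<\infty$: setting $Y^n_t=\int_0^t\bigl(-\lambda V^n_s\,ds+\sigma(V^n_s)\,dB_s\bigr)$, one obtains
\begin{equation*}
d\bigl(V^n_t-g^n(t)\bigr) \;=\; K^n(0)\,dY^n_t \;+\; \bigl((K^n)'*dY^n\bigr)(t)\,dt,
\end{equation*}
a genuine semimartingale decomposition whose martingale part is $K^n(0)\sigma(V^n_t)\,dB_t$ and whose finite-variation part collects both the Lipschitz drift and the convolution against the bounded derivative $(K^n)'$. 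It\^o applied to $\varphi_k$ of the difference then yields a quadratic-variation term $\tfrac{K^n(0)^2}{2}\int_0^t\varphi_k''(\Delta_s)\bigl(\sigma(V^n_s)-\sigma(\tilde V^n_s)\bigr)^2\,ds$ with the \emph{constant} $K^n(0)^2$ outside the integrand; combining $\varphi_k''(x)\le\tfrac{2}{k}x^{-2\eta}$ with $|\sigma(x)-\sigma(y)|\le C|x-y|^{\eta}$ (your stated bound $\psi_k''(x)|x|^{2\eta}\le \tfrac{c}{k}|x|^{2\eta-1}$ carries an extraneous factor) makes this term $O(1/k)$ uniformly, and the linear Gr\"onwall inequality coming from the Lipschitz drift term and the bounded $(K^n)'$-convolution closes the argument. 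Without this derivative trick the argument, as written, does not go through.
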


Due to the uniqueness of  \eqref{multifactorSDE}, we obtain that $V^n$ is a Markovian process according to $n$ state variables $(V^{n,i})_{1 \leq i \leq n}$ that we call the factors of $V^n$. Moreover, $V^n$ being non-negative, it can model  a variance process. This leads to the following definition of multi-factor stochastic volatility models. 

\begin{definition} (Multi-factor stochastic volatility models). \label{multifactorModel} We define the following sequence of multi-factor  stochastic volatility models $(S^n,V^n)=(S^n_t, V^n_t)_{t \leq T}$ as the unique $\R \times \R_+$-valued strong solution of 
$$ dS^n_t = S^n_t \sqrt{V^n_t} dW_t, \quad  V^n_t = g^n(t) + \sum_{i = 1}^n c_i^n V^{n,i}_t,$$
with
$$ dV^{n,i}_t = (-\gamma_i^n V^{n,i}_t - \lambda V_t^n) dt + \sigma(V_t^n) dB_t, \quad V^{n,i}_0 = 0, \quad S^n_0 = S_0 > 0 , $$
on a filtered probability space $(\Omega, \cal F, \P, \mathbb F)$, where $\mathbb F$ is the canonical filtration a two-dimensional Brownian motion $(W, W^\perp)$ and $B = \rho W + \sqrt{1-\rho^2} W^\perp$ with $\rho \in [-1, 1]$. Here, the weights $(c_i^n)_{1 \leq i \leq n}$ and mean reversions $(\gamma_i^n)_{1 \leq i \leq n}$ are positive,  $\sigma : \R \mapsto \R$ is $\eta$-H\"older continuous such that $\sigma(0) = 0$, $\eta \in [1/2,1]$ and $g^n$ is given by \eqref{casG_n}, that is
$$g^n(t) = V_0 + \int_0^t K^n(t-s) \theta(s) ds, $$
with a non-negative initial variance $V_0$, a kernel $K^n$ defined as in \eqref{smoothedKernel} and a non-negative deterministic function $\theta : [0,T] \mapsto \R$ satisfying \eqref{cond1}. 
\end{definition}
Note that the strong existence and uniqueness of $(S^n, V^n)$ follows from Theorem \ref{Yamada}. This model is Markovian with $n + 1$ state variables which are  the spot price $S^n$ and the factors of the variance process $V^{n,i}$ for $ i \in \{1,\dots, n\}$.

\subsection{An approximation of the fractional kernel} \label{choiceK}
Relying on \eqref{multifactorVolterra}, we can see the process $V^n$ as an approximation of $V$, solution of \eqref{roughVol}, obtained by smoothing the fractional kernel $K(t) = \frac{t^{H - \frac12}}{\Gamma(H+1/2)}$ into  $K^n(t) = \sum_{i = 1}^n c_i^n e^{-\gamma_i^n t}$. Intuitively, we need to choose $K^n$ close to $K$ when $n$ goes to infinity, so that $(V^n)_{n \geq 1}$ converges to $V$. Inspired by \cite{CCM00}, we give in this section a condition on the weights $(c^n_i)_{1 \leq i \leq n}$ and mean reversion terms  $0 < \gamma_1^n <  ... < \gamma_n^n$ so that the following convergence  
$$ \left\| K^n - K\right\|_{2,T} \rightarrow 0 ,$$
holds as $n$ goes to infinity,
 where $\| \cdot \|_{2, T}$ is the usual $\mathbb L^2([0,T], \R)$ norm. Let $(\eta_i^n)_{0 \leq i \leq n}$ be auxiliary mean reversion terms such that $\eta_0^n=0$ and  $\eta_{i-1}^n \leq \gamma_i^n \leq \eta_i^n$ for $ i \in \{1,\dots, n\}$. Writing $K$ as the Laplace transform of $\mu$ as in \eqref{Laplace}, we obtain that
	$$\left\| K^n - K \right\|_{2,T}  \leq \int_{\eta_n^n}^\infty \| e^{-\gamma (\cdot)} \|_{2,T}\mu(d\gamma) + \sum_{i = 1}^n J_i^n, $$
	with $J_i^n = \| c_i^n e^{- \gamma_i^n (\cdot)} - \int_{\eta_{i-1}^n}^{\eta_i^n} e^{-\gamma (\cdot)} \mu(d\gamma) \|_{2, T} $. We start by dealing with the first term, 
\begin{equation*}  
  \int_{\eta_n^n}^\infty \| e^{-\gamma (\cdot)} \|_{2,T} \mu(d\gamma) =  \int_{\eta_n^n}^\infty \sqrt{\frac{1-e^{-2\gamma T}}{2\gamma}} \mu(d\gamma) \leq \frac{1}{ H  \Gamma(H+1/2) \Gamma(1/2-H) \sqrt{2}} (\eta^n_n)^{-H}.  
  \end{equation*}
	
	Moreover by choosing
	\begin{equation}\label{E:cgamma} 
	c_i^n = \int_{\eta_{i-1}^n}^{\eta_i^n} \mu(d\gamma), \quad \gamma_i^n = \frac{1}{c^n_i} \int_{\eta_{i-1}^n}^{\eta_i^n} \gamma \mu(d\gamma), \quad i \in \{1,\dots,n\},
	\end{equation}
	and using the Taylor-Lagrange inequality up to the second order, we obtain 
	\begin{equation}  \label{tayllor}
	\left| c_i^n e^{- \gamma_i^n t} - \int_{\eta_{i-1}^n}^{\eta_i^n} e^{-\gamma t} \mu(d\gamma) \right| \leq \frac{t^2}{2} \int_{\eta_{i-1}^n}^{\eta_i^n}(\gamma - \gamma_i^n)^2 \mu(d\gamma), \quad t \in [0,T]. 
	\end{equation}
	
	Therefore,
	$$ \sum_{i = 1}^n J_i^n \leq \frac{T^{5/2}}{2\sqrt5} \sum_{i=1}^n \int_{\eta_{i-1}^n}^{\eta_i^n} (\gamma_i^n-\gamma)^2 \mu(d\gamma)  .
$$
This leads to the following inequality
	\begin{equation*}
	\| K^n - K \|_{2,T}  \leq f_n^{(2)} \big((\eta_i)_{0 \leq i \leq n}\big)	,
	\end{equation*}
	where $f_n^{(2)}$ is a function of the auxiliary mean reversions defined by
	\begin{equation} \label{f_n_2}
	f_n^{(2)}((\eta_i^n)_{1 \leq i \leq n}) = \frac{T^{\frac52}}{2 \sqrt{5}}  \sum_{i = 1}^n \int_{\eta_{i-1}^n}^{\eta_i^n}(\gamma - \gamma_i^n)^2 \mu(d\gamma) + \frac{1}{ H  \Gamma(H+1/2) \Gamma(1/2-H) \sqrt{2}} (\eta^n_n)^{-H}.
	\end{equation}

	Hence, we obtain the convergence of $K^n$ to the fractional kernel under the following choice of weights and mean reversions.
	\begin{assumption} \label{factorsAssump} We assume  that the weights and mean reversions are given by \eqref{E:cgamma} such that $ \eta_0^n=0 < \eta_1^n < \ldots < \eta_n^n$ and 
	\begin{equation} \label{condFacteurs}
	\eta_n^n \rightarrow \infty , \quad  \sum_{i=1}^n \int_{\eta_{i-1}^n}^{\eta_i^n} (\gamma_i^n-\gamma)^2 \mu(d\gamma) \rightarrow 0,  
	\end{equation}
	as $n$ goes to infinity.
	\end{assumption}

	\begin{proposition} \label{convergenceK}  Fix $(c^n_i)_{1 \leq i\leq n}$ and $(\gamma_i^n)_{1 \leq i\leq n}$  as in Assumption \ref{factorsAssump} and   $K^n$ given by \eqref{smoothedKernel}, for all $n \geq 1$. Then, $(K^n)_{n \geq 1}$ converges in $\mathbb L^2[0,T]$ to the fractional kernel $K(t) = \frac{t^{H - 1/2}}{\Gamma(H+\frac12)} $ as $n$ goes to infinity.
	\end{proposition}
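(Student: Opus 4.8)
The plan is to establish the explicit bound $\| K^n - K \|_{2,T} \le f_n^{(2)}\big((\eta_i^n)_{0 \le i \le n}\big)$, with $f_n^{(2)}$ as in \eqref{f_n_2}, and then to let the right-hand side vanish using Assumption \ref{factorsAssump}. Concretely, I would start from the Laplace representation \eqref{Laplace}, split the mass of $\mu$ along the auxiliary grid $0 = \eta_0^n < \eta_1^n < \cdots < \eta_n^n$, and write
$$ K(t) = \int_{\eta_n^n}^\infty e^{-\gamma t}\,\mu(d\gamma) + \sum_{i=1}^n \int_{\eta_{i-1}^n}^{\eta_i^n} e^{-\gamma t}\,\mu(d\gamma), \qquad K^n(t) = \sum_{i=1}^n c_i^n e^{-\gamma_i^n t}. $$
Subtracting and applying the triangle inequality in $\mathbb L^2([0,T])$ together with Minkowski's integral inequality to pull the norm inside the integral/sum gives
$$ \| K^n - K \|_{2,T} \le \int_{\eta_n^n}^\infty \| e^{-\gamma(\cdot)} \|_{2,T}\,\mu(d\gamma) + \sum_{i=1}^n J_i^n, \qquad J_i^n = \Big\| c_i^n e^{-\gamma_i^n(\cdot)} - \int_{\eta_{i-1}^n}^{\eta_i^n} e^{-\gamma(\cdot)}\,\mu(d\gamma) \Big\|_{2,T}. $$

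For the tail term I would use $\| e^{-\gamma(\cdot)} \|_{2,T}^2 = \frac{1-e^{-2\gamma T}}{2\gamma} \le \frac1{2\gamma}$ and the explicit density $\mu(d\gamma) = \frac{\gamma^{-H-1/2}}{\Gamma(H+1/2)\Gamma(1/2-H)}\,d\gamma$, so that the tail is bounded by $\frac1{\sqrt2\,\Gamma(H+1/2)\Gamma(1/2-H)}\int_{\eta_n^n}^\infty \gamma^{-H-1}\,d\gamma = \frac{(\eta_n^n)^{-H}}{\sqrt2\,H\,\Gamma(H+1/2)\Gamma(1/2-H)}$, which is exactly the second term of $f_n^{(2)}$. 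For each local term $J_i^n$, the key point is that the choice \eqref{E:cgamma} makes $c_i^n$ the $\mu$-mass of $[\eta_{i-1}^n,\eta_i^n]$ and $\gamma_i^n$ its barycenter; hence, for fixed $t$, the first-order term in the Taylor expansion of $\gamma \mapsto e^{-\gamma t}$ around $\gamma_i^n$ integrates to zero against $\mu$ on that interval, and the Taylor-Lagrange remainder (applied up to second order) yields the pointwise bound \eqref{tayllor}. Taking the $\mathbb L^2([0,T])$ norm in $t$ (using $\| t^2 \|_{2,T} = T^{5/2}/\sqrt5$) and summing over $i$ produces $\sum_{i=1}^n J_i^n \le \frac{T^{5/2}}{2\sqrt5}\sum_{i=1}^n \int_{\eta_{i-1}^n}^{\eta_i^n}(\gamma-\gamma_i^n)^2\,\mu(d\gamma)$, the first term of $f_n^{(2)}$.

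Combining the two estimates gives $\| K^n - K \|_{2,T} \le f_n^{(2)}\big((\eta_i^n)_{0 \le i \le n}\big)$, and under \eqref{condFacteurs} the first summand of $f_n^{(2)}$ tends to $0$ by hypothesis while the second tends to $0$ because $\eta_n^n \to \infty$; hence $\| K^n - K \|_{2,T} \to 0$, which is the claim. The only mildly delicate steps are the justification that the first-order Taylor contribution vanishes — which is precisely the barycenter identity $\int_{\eta_{i-1}^n}^{\eta_i^n}(\gamma - \gamma_i^n)\,\mu(d\gamma) = 0$ built into \eqref{E:cgamma} — and checking that the integrals $\int_0^{\cdot}\gamma^{k}\mu(d\gamma)$ with $k \in \{0,1,2\}$ appearing here are finite near $0$, which holds since $H \in (0,1/2)$ makes $\gamma^{-H-1/2}$ and $\gamma^{2-H-1/2}$ locally integrable at the origin; everything else is a routine assembly of the estimates already displayed in this section. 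I do not expect any genuine obstacle, as the argument is a direct quantitative comparison of two kernels via their Laplace representations.
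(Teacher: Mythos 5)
Your proposal is correct and follows exactly the same route the paper takes in Section \ref{choiceK}: split the error along the auxiliary grid via the Laplace representation \eqref{Laplace}, bound the tail integral using the explicit density of $\mu$, exploit the barycenter property of \eqref{E:cgamma} through the second-order Taylor--Lagrange bound \eqref{tayllor}, and assemble the estimate $\|K^n - K\|_{2,T} \le f_n^{(2)}$ before invoking Assumption \ref{factorsAssump}. Your spelling out of the cancellation $\int_{\eta_{i-1}^n}^{\eta_i^n}(\gamma-\gamma_i^n)\,\mu(d\gamma)=0$ and of the local integrability of $\gamma^k\mu(d\gamma)$ near the origin is a helpful clarification of steps the paper leaves implicit, but the argument is substantively identical.
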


	There exists several choices of auxiliary factors such that condition \eqref{condFacteurs} is met. For instance, assume that $\eta_i^n = i \pi_n$ for each $i \in \{0,\dots, n\}$ such that $\pi_n >0$.  It follows from
	$$ \sum_{i = 1}^n \int_{\eta_{i-1}^n}^{\eta_i^n} (\gamma - \gamma_i)^2 \mu(d\gamma) \leq \pi_n^2 \int_0^{\eta_n^n} \mu(d\gamma) = \frac{1}{  (1/2-H) \Gamma(H+1/2) \Gamma(1/2-H)} \pi_n^{\frac52-H} n^{\frac12-H}, $$ 
	that \eqref{condFacteurs} is satisfied for 
	$$ \eta_n^n = n \pi_n \rightarrow \infty , \quad  \pi_n^{\frac52-H} n^{\frac12-H} \rightarrow 0 ,$$
	as $n$ tends to infinity. In this case,
	$$\|K^n - K\|_{2, T} \leq\frac{1}{H \Gamma(H+1/2) \Gamma(1/2-H) \sqrt{2}} \left((\eta_n^n)^{-H} + \frac{H T^{\frac52}}{\sqrt{10} (1/2-H)} \pi_n^{2} (\eta_n^n)^{\frac12-H} \right). $$
	This upper bound is minimal for 
	\begin{equation}\label{pasOptimal}
	 \pi_n = \frac{n^{-\frac15}}{T} \big(\frac{\sqrt{10} (1-2H)}{5 - 2H}\big)^{\frac25} ,
	\end{equation}
	and
	$$ \| K^n - K \|_{2,T} \leq C_H n^{-\frac{4H}5}, $$
	where  $C_H$ is a positive  constant that can be computed explicitly and that depends only on the Hurst parameter $H \in (0, 1/2)$.

	\begin{remark} Note that the kernel approximation in Proposition \ref{convergenceK} can be easily extended to any kernel of the form
	$$ K(t) = \int_{0}^{\infty} e^{- \gamma t} \mu(d\gamma), $$
	where $\mu$ is a non-negative measure such  that 
	\begin{equation*} 
	\int_0^{\infty} (1 \wedge \gamma^{-1/2}) \mu(d\gamma) < \infty.
	\end{equation*}
		\end{remark}

\subsection{Convergence result}	\label{S:conv}
We assume now that the weights and mean reversions of the multi-factor stochastic volatility model $(S^n, V^n)$ satisfy Assumption \ref{factorsAssump}. Thanks to Proposition \ref{convergenceK}, the smoothed kernel $K^n$ is close to the fractional one for large $n$. Because $V^n$ satisfies the stochastic Volterra equation \eqref{multifactorVolterra}, $V^n$ has to be close to $V$ and thus by passing to the limit, $(S^n, V^n)_{n\geq 1}$ should  converge to the rough volatility model $(S,V)$ of Definition \ref{roughVolModel} as $n$ goes large. This is the object of the next theorem, which is the main result of this paper.
\begin{theorem}\label{mainResult} Let $(S^n, V^n)_{n \geq 1}$ be a sequence of multi-factor stochastic volatility models given by Definition \ref{multifactorModel}. Then, under Assumption \ref{factorsAssump}, the family $(S^n, V^n)_{n \geq 1}$ is tight for the uniform topology and any point limit $(S, V)$ is a rough volatility model given by Definition \ref{roughVolModel}. 
\end{theorem}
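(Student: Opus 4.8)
The plan is to regard each $V^n$ as the solution of the stochastic Volterra equation \eqref{multifactorVolterra} driven by the fixed Brownian motion $B$ with the smoothed kernel $K^n$, and to combine the convergence $\|K^n-K\|_{2,T}\to0$ from Proposition \ref{convergenceK} with a priori bounds that are \emph{uniform in $n$}. Concretely I would proceed in three steps: (i) prove uniform moment and regularity estimates for $(V^n)_{n\ge1}$ and deduce tightness of $(S^n,V^n)_{n\ge1}$ for the uniform topology; (ii) extract a subsequence along which $(S^n,V^n)$ converges in law and switch to an almost-sure representation via Skorokhod's theorem; (iii) pass to the limit in \eqref{multifactorVolterra} and in $dS^n_t=S^n_t\sqrt{V^n_t}\,dW_t$, and check that every limit point satisfies all the requirements of Definition \ref{roughVolModel}. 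This is exactly the set-up of the abstract stability result for stochastic Volterra equations announced for Section \ref{stabilitySec}, specialized to $(K^n,g^n)\to(K,g)$; the steps below both indicate why its hypotheses hold and how its conclusion is used.

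\textbf{Uniform estimates and tightness.} A convenient preliminary observation is the domination $0\le K^n\le K$ on $[0,T]$: it follows from Jensen's inequality applied on each interval $(\eta_{i-1}^n,\eta_i^n]$ to the convex map $\gamma\mapsto e^{-\gamma t}$ together with the barycentric choice \eqref{E:cgamma} of $\gamma_i^n$, which gives $c_i^n e^{-\gamma_i^n t}\le\int_{\eta_{i-1}^n}^{\eta_i^n}e^{-\gamma t}\mu(d\gamma)$, and summation. Hence $\sup_n\|K^n\|_{2,T}<\infty$ and $\sup_n\|K^n\|_{1,T}\le\sqrt T\sup_n\|K^n\|_{2,T}<\infty$. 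Plugging this into \eqref{multifactorVolterra}, and using the linear growth of $\sigma$, the Burkholder--Davis--Gundy inequality and a Volterra--Gronwall argument (as behind \eqref{EV}), I would obtain $\sup_{n\ge1}\sup_{t\le T}\E[|V^n_t|^p]<\infty$ for every $p>0$; then, decomposing $V^n_t-V^n_s$ as $(g^n(t)-g^n(s))+\int_s^t K^n(t-u)(\cdots)+\int_0^s(K^n(t-u)-K^n(s-u))(\cdots)$ and using the shift estimates on $K$ (inherited by $K^n$ via $K^n\le K$), I would get $\E[|V^n_t-V^n_s|^p]\le C_p|t-s|^{p\beta}$ for every $\beta<H$, uniformly in $n$. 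By Kolmogorov--Chentsov, $(V^n)_{n\ge1}$ is then tight in $C([0,T];\R_+)$ with limit points H\"older continuous of any order $<H$. The same moment bounds give tightness of $\int_0^{\cdot}V^n_s\,ds$, of the martingale $M^n:=\int_0^{\cdot}\sqrt{V^n_s}\,dW_s$ (whose bracket is $\int_0^{\cdot}V^n_s\,ds$), and hence of $S^n=\mathcal E(M^n)$; so the whole family $(S^n,V^n,B,W,W^\perp)_{n\ge1}$ is tight.

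\textbf{Passing to the limit.} Fixing a subsequence along which $(S^n,V^n,B,W,W^\perp)$ converges in law, I would use Skorokhod's representation to realize it on a common space with almost-sure uniform convergence, the drivers $B^n\to B$, $W^n\to W$ staying Brownian and still satisfying $W^n=\rho B^n+\sqrt{1-\rho^2}(W^\perp)^n$. In the limit of \eqref{multifactorVolterra}: $g^n\to g$ pointwise on $(0,T]$ (from $0\le K^n\le K$, $\|K^n-K\|_{2,T}\to0$, condition \eqref{cond1}, and dominated convergence with dominating function $K(t-\cdot)\theta(\cdot)$); $\int_0^t K^n(t-s)(-\lambda V^n_s)\,ds\to\int_0^t K(t-s)(-\lambda V_s)\,ds$ by splitting off $(K^n-K)$ (Cauchy--Schwarz, $\|K^n-K\|_{2,T}\to0$, uniform $L^2$ bound on $V^n$) and dominated convergence for the rest ($K\in L^1$); and the stochastic term $\int_0^t K^n(t-s)\sigma(V^n_s)\,dB^n_s\to\int_0^t K(t-s)\sigma(V_s)\,dB_s$ handled in two stages --- the $(K^n-K)$ part vanishes in $L^2$ by It\^o's isometry and $\|K^n-K\|_{2,T}^2\to0$ with $\sup_n\E[\sigma(V^n_s)^2]<\infty$, while for the fixed kernel $K$ one invokes a Kurtz--Protter-type stability theorem for stochastic integrals, the integrands $\sigma(V^n_\cdot)\to\sigma(V_\cdot)$ converging uniformly by continuity of $\sigma$ and the uniform moments supplying uniform integrability. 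This identifies $V$ as a solution of \eqref{roughVol} driven by $B$; passing to the limit in $dS^n_t=S^n_t\sqrt{V^n_t}\,dW_t$ yields $dS_t=S_t\sqrt{V_t}\,dW_t$. It then remains to verify the remaining items of Definition \ref{roughVolModel}: $V\ge0$ (a.s.\ limit of the non-negative $V^n$); continuity and H\"older-$<H$ regularity (from the uniform increment bounds and Kolmogorov); the correlation $W=\rho B+\sqrt{1-\rho^2}W^\perp$, preserved in the limit; and adaptedness of $(S,V)$ to the filtration generated by $(W,W^\perp)$.

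\textbf{Main obstacle.} The hard part is the limit passage inside the stochastic Volterra integral of \eqref{multifactorVolterra}: the kernels converge only in $L^2([0,T])$, the limit kernel $K$ is singular at the origin, and after the Skorokhod reduction both the integrand $\sigma(V^n_\cdot)$ and the driver $B^n$ depend on $n$. This forces the two-stage scheme above --- an $L^2$-kernel estimate removing $K^n-K$, then a stochastic-integral limit theorem for the fixed singular kernel $K$ --- held together by the uniform-in-$n$ moment bounds; making the second stage rigorous with the deterministic Volterra kernel present is where the real effort goes, and it is precisely this that the abstract stability result of Section \ref{stabilitySec} is meant to package. A secondary, structural subtlety is that weak uniqueness for \eqref{roughVol} is unavailable in general (only in the H\"older regimes noted after Definition \ref{roughVolModel}), so one cannot upgrade to convergence of the whole sequence and must verify the full list of properties of Definition \ref{roughVolModel} for an \emph{arbitrary} limit point --- in particular the path regularity and the precise correlation and adaptedness structure.
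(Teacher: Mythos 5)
Your high-level plan (uniform moments $\Rightarrow$ Kolmogorov tightness; Skorokhod representation; identification of the limit as a solution of \eqref{roughVol}) matches the paper's, and, like the paper, you correctly recognize that everything hinges on an abstract stability statement for stochastic Volterra equations. The Jensen observation $0\le K^n\le K$ obtained from the barycentric choice \eqref{E:cgamma} is a nice shortcut the paper does not use, and it does yield $\sup_n\|K^n\|_{2,T}<\infty$, but be careful: domination of $K^n$ by $K$ does \emph{not} transfer the shift estimate $\int_0^{T-h}|K^n(h+s)-K^n(s)|^2\,ds\le Ch^{2H}$ from $K$ to $K^n$ (a smaller function can still oscillate more). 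The paper proves this uniform increment bound separately in Lemma~\ref{uniform_reg}, via the Laplace representation and a second-order Taylor--Lagrange estimate on $\gamma\mapsto e^{-\gamma t}-e^{-\gamma(t+h)}$; without that step, your appeal to ``shift estimates on $K$ inherited by $K^n$'' leaves a real gap in the Kolmogorov criterion for $(V^n)_n$.

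The more substantive divergence is in the identification step. You propose, after subtracting the $(K^n-K)$ contribution in $L^2$, to pass to the limit in $\int_0^t K(t-s)\sigma(V^n_s)\,dB^n_s$ via a Kurtz--Protter-type stability theorem. But for a fixed singular kernel $K$ the map $t\mapsto\int_0^t K(t-s)\sigma(V^n_s)\,dB^n_s$ is not a semimartingale (the kernel sits \emph{inside} the integral and depends on $t$), so the standard UCP/good-integrator framework of Kurtz--Protter does not apply off the shelf; one would have to rebuild a stability theory for stochastic convolutions. The paper sidesteps this entirely: in the proof of Theorem~\ref{stabilityTheorem} it convolves the Volterra equation with $L$, the resolvent of the first kind of $K$, obtaining
\begin{equation*}
(L*X^n)_t=(L*g^n)(t)+\bigl(L*((K^n-K)*dY^n)\bigr)_t+Y^n_t,
\end{equation*}
where $Y^n_t=\int_0^t b(X^n_s)\,ds+M^n_t$ and $M^n_t=\int_0^t\sigma(X^n_s)\,dW^n_s$ is a genuine martingale. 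The middle term vanishes by an $L^2$ estimate using $\|K^n-K\|_{2,T}\to0$, and for the remaining term one needs only classical limit theorems for continuous local martingales (\cite[Theorem~VI-6.26]{jacod2013limit}) plus the martingale representation theorem, not a Volterra-specific integral-convergence result. After the limit, one convolves back with $K$. This resolvent trick is the key idea your outline is missing and is what makes the stochastic-term passage elementary. A minor further difference: the paper identifies the limiting Brownian motion \emph{a posteriori} from the bracket of the limit martingale, rather than tracking the convergence of the drivers $B^n,W^n$ through the Skorokhod representation as you do; this avoids having to argue that the Brownian structure and correlation are preserved under Skorokhod coupling.
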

Theorem \ref{mainResult}  states the convergence in law of $(S^n, V^n)_{n \geq 1}$ whenever the fractional stochastic integral equation \eqref{roughVol} admits a unique weak solution. 
In order to prove Theorem \ref{mainResult}, whose proof is in Section \ref{mainResultProof} below, a more general stability result for $d$-dimensional stochastic Volterra equations is established in the next subsection.

\subsection{Stability of stochastic Volterra equations} \label{stabilitySec}
	As mentioned above, Theorem \ref{mainResult} relies on the study of the stability of more general $d$-dimensional stochastic Volterra equations of the form 	
	\begin{equation} \label{generalDiffusion}
	X_t = g(t) + \int_0^t K(t-s) b(X_s) ds + \int_0^t K(t-s) \sigma(X_s) dW_s, \quad t \in [0,T],
	\end{equation}
	where $b: \mathbb R^d \rightarrow \mathbb R^d$, $\sigma : \mathbb R^d \rightarrow {\mathbb R}^{d \times m}$ are continuous and satisfy the linear growth condition,  $K \in \mathbb L^2([0,T], \R^{d \times d})$ admits a resolvent of the first kind $L$, see Appendix~\ref{resolvent1},  and $W$  is a $m$-dimensional Brownian motion on some filtered probability space $(\Omega, {\cal F}, \mathbb F ,\mathbb P )$. From Proposition \ref{weakExistence} in the Appendix,  $g: [0,T] \mapsto \R^d$ and $K \in \mathbb L^2([0,T], \R^{d \times d})$ should satisfy Assumption \ref{regularity}, that is
	\begin{equation} \label{regularity_eq} 
	|g(t+h) - g(t)|^2 + \int_0^h |K(s)|^2 ds + \int_0^{T-h} |K(h+s) - K(s)|^2 ds  \leq C h^{2\gamma}, 
	\end{equation}
	for any $t, h \geq 0$ with $t + h \leq T$ and for some positive constants $C$ and $\gamma$, to guarantee the weak existence of a continuous solution $X$ of \eqref{generalDiffusion}.  	\\
	
	More precisely, we consider a sequence $X^n=(X^n_t)_{t \leq T}$ of  continuous weak solutions to the stochastic Volterra equation \eqref{generalDiffusion} with a kernel $K^n \in \mathbb L^2([0,T], \R^{d \times d})$ admitting a resolvent of the first kind, on some filtered probability space $(\Omega^n, {\cal F}^n, \mathbb F^n ,\mathbb P^n )$,  
	\begin{equation*}
	X_t^n = g^n(t) + \int_0^t K^n(t-s) b(X_s^n) ds + \int_0^t K^n(t-s) \sigma(X_s^n) dW^n_s, \quad t \in [0,T],
	\end{equation*}
	 with $g^n: [0,T] \mapsto \R^d$ and $K^n$ satisfying \eqref{regularity_eq} for every $n \geq 1$. The stability of \eqref{generalDiffusion} means the convergence in law of the family of solutions $(X^n)_{n \geq 1}$ to a limiting process $X$ which is a solution to \eqref{generalDiffusion}, when $(K^n, g^n)$ is close to $(K,g)$ as $n$ goes large.\\
	 
	 This convergence is established by verifying first the Kolmogorov tightness criterion for the sequence $(X^n)_{n \geq 1}$. It is  obtained when $g^n$ and $K^n$ satisfy \eqref{regularity_eq} uniformly in $n$ in the following sense.
	\begin{assumption} \label{regularity_n} There exists positive constants $\gamma$ and $C$ such that
	$$ \sup_{n \geq 1} \left( |g^n(t+h) - g^n(t)|^2 + \int_0^h |K^n(s)|^2 ds + \int_0^{T - h} |K^n(h+s) - K^n(s)|^2 ds  \right) \leq C h^{2\gamma}, $$
	 for any $t, h \geq 0$ with $t + h \leq T$,
	\end{assumption}
	The following result, whose proof is postponed to Section \ref{stability_sec_proof} below, states the convergence of $(X^n)_{n \geq 1}$ to a solution of \eqref{generalDiffusion}.
	
	\begin{theorem} \label{stabilityTheorem} Assume that
	$$ \int_0^T | K(s) - K^n(s) |^2 ds  \longrightarrow 0 , \quad  g_n(t) \longrightarrow g(t), $$
	for any $t \in [0,T]$ as $n$ goes to infinity. Then, under Assumption~\ref{regularity_n}, the sequence $(X^n)_{n \geq 1}$ is tight for the uniform topology and any point limit $X$ is a solution of the stochastic Volterra equation \eqref{generalDiffusion}.
	\end{theorem}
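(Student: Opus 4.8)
The proof is a tightness-plus-identification argument, in which the Volterra structure dictates how the regularity Assumption~\ref{regularity_n} enters. \emph{Step 1 (uniform moments):} first I would show $\sup_{n\geq1}\mathbb{E}^n[\sup_{t\leq T}|X^n_t|^p]<\infty$ for every $p\geq2$. Applying Assumption~\ref{regularity_n} with $t=0$ together with the pointwise convergence $g^n(t)\to g(t)$ gives $\sup_n\sup_{t\leq T}|g^n(t)|<\infty$ and $\sup_n\int_0^T|K^n(s)|^2\,ds<\infty$, while the uniform equicontinuity of $(g^n)$ upgrades $g^n\to g$ to uniform convergence on $[0,T]$; then the Burkholder--Davis--Gundy inequality for the stochastic term, Cauchy--Schwarz for the drift, the linear growth of $b,\sigma$, a convolution (Volterra) Gronwall estimate, and Kolmogorov's continuity theorem deliver the bound (the pointwise moments $\sup_n\sup_t\mathbb{E}^n|X^n_t|^p<\infty$ come first, the $\sup$-inside version after Step~2). \emph{Step 2 (Kolmogorov tightness):} taking $p$ with $p\gamma>1$, I would prove
\[
\sup_{n\geq1}\mathbb{E}^n\big[|X^n_{t+h}-X^n_t|^p\big]\leq C\,h^{p\gamma},\qquad 0\leq t\leq t+h\leq T,
\]
by decomposing $X^n_{t+h}-X^n_t=\big(g^n(t+h)-g^n(t)\big)+\int_t^{t+h}K^n(t+h-s)b(X^n_s)\,ds+\int_0^t\big(K^n(t+h-s)-K^n(t-s)\big)b(X^n_s)\,ds$, and likewise for the stochastic integral; by Cauchy--Schwarz, Burkholder--Davis--Gundy and Step~1 the three contributions are controlled by $|g^n(t+h)-g^n(t)|^2$, $\int_0^h|K^n(s)|^2\,ds$ and $\int_0^{T-h}|K^n(h+s)-K^n(s)|^2\,ds$ (note $t\leq T-h$), i.e. precisely the quantities bounded by $Ch^{2\gamma}$ in Assumption~\ref{regularity_n}. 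The Kolmogorov--Chentsov criterion then yields tightness of $(X^n)_{n\geq1}$ in $C([0,T],\mathbb{R}^d)$, and since $(W^n)_{n\geq1}$ is trivially tight, so is the family of laws of the pairs $(X^n,W^n)$.

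\emph{Step 3 (Skorokhod):} by Prokhorov's theorem I would extract a subsequence along which $(X^n,W^n)$ converges in law and, by the Skorokhod representation theorem, realise on one probability space copies $(\tilde X^n,\tilde W^n)$ with $(\tilde X^n,\tilde W^n)\to(\tilde X,\tilde W)$ almost surely in $C([0,T],\mathbb{R}^{d+m})$. Since equality in law preserves the property of being a Brownian motion relative to the generated filtration and the conditional-expectation characterisation of "solving \eqref{generalDiffusion}", standard arguments show that $\tilde W$ is a Brownian motion for the filtration generated by $(\tilde X,\tilde W)$, that each $\tilde X^n$ still solves \eqref{generalDiffusion} with data $(K^n,g^n)$ and driving noise $\tilde W^n$, and the moment bounds of Step~1 transfer. \emph{Step 4a (drift):} in $\tilde X^n_t=g^n(t)+\int_0^t K^n(t-s)b(\tilde X^n_s)\,ds+\int_0^t K^n(t-s)\sigma(\tilde X^n_s)\,d\tilde W^n_s$ the left side tends to $\tilde X_t$ and $g^n(t)\to g(t)$; for the drift I would split off $\int_0^t(K^n-K)(t-s)b(\tilde X^n_s)\,ds$, whose $\mathbb{L}^2$-norm is $\lesssim\|K^n-K\|_{2,T}$ times a uniform $\mathbb{L}^2$-bound on $b(\tilde X^n)$ (Step~1, linear growth), hence vanishing, while $\int_0^t K(t-s)b(\tilde X^n_s)\,ds\to\int_0^t K(t-s)b(\tilde X_s)\,ds$ by dominated convergence, using the a.s. uniform convergence, continuity of $b$, and the uniform integrability furnished by the higher moments.

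\emph{Step 4b (stochastic term):} I would first isolate $\int_0^t(K^n-K)(t-s)\sigma(\tilde X^n_s)\,d\tilde W^n_s$: by It\^o's isometry its second moment is at most $\int_0^t|(K^n-K)(t-s)|^2\,\mathbb{E}[|\sigma(\tilde X^n_s)|^2]\,ds$ — the kernel is deterministic and comes out of the expectation — which is $\leq C\|K^n-K\|_{2,T}^2\to0$ by Step~1 and linear growth. Then, for the fixed kernel $K$, I would approximate $K(t-\cdot)$ in $\mathbb{L}^2([0,t])$ by a bounded continuous kernel, bound the resulting error uniformly in $n$ again by It\^o's isometry and Step~1, and pass to the limit in the approximating stochastic integral, whose integrand converges uniformly a.s. jointly with the Brownian integrator $\tilde W^n\to\tilde W$, using the stability of stochastic integrals against Brownian ("good") integrators — a Riemann-sum argument uniform in $n$, or a Kurtz--Protter type theorem. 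Collecting the pieces, $\tilde X$ satisfies \eqref{generalDiffusion} with data $(K,g)$ and Brownian motion $\tilde W$, hence is a weak solution; since $\tilde X$ has the law of the chosen subsequential limit of $X^n$, and the subsequence was arbitrary, every limit point of $(X^n)_{n\geq1}$ is a solution of \eqref{generalDiffusion}.

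\emph{Main obstacle.} The crux is precisely the convergence of the stochastic convolutions $\int_0^\cdot K^n(\cdot-s)\sigma(\tilde X^n_s)\,d\tilde W^n_s$, in which the kernel, the integrand and the Brownian integrator all vary at once and the kernels converge only in $\mathbb{L}^2$ (the fractional limit being unbounded at the origin). The two-stage device above — using It\^o's isometry to neutralise the kernel difference because the kernel is deterministic, then reducing to a bounded continuous kernel and invoking stability of stochastic integrals against Brownian integrators — is designed to circumvent exactly this difficulty, and the recurring technical point is to keep every estimate uniform in $n$, which is where Step~1 and Assumption~\ref{regularity_n} do the work.
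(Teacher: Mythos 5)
Your tightness argument (Steps 1--2) tracks the paper essentially line by line: the same decomposition of increments, the same use of BDG, linear growth and a convolutional Gr\"onwall estimate via the canonical resolvent, the same role for Assumption~\ref{regularity_n} and for the convergence of $g^n(0)$ and $\int_0^T|K^n|^2$. The identification step (Steps 3--4) is where you genuinely diverge. You carry the driving Brownian motion $W^n$ through the Skorokhod coupling and try to pass to the limit directly in the stochastic Volterra convolution $\int_0^t K(t-s)\sigma(\tilde X^n_s)\,d\tilde W^n_s$, handling the singular kernel by first isolating the $(K^n-K)$ contribution via It\^o's isometry (valid and uniform in $n$ because the kernel is deterministic), then mollifying $K$ by a bounded continuous kernel with an $\mathbb L^2$-error that is again uniform in $n$, and finally invoking a Kurtz--Protter/Riemann-sum stability theorem for the stochastic integrals in which both the integrand and the Brownian integrator vary. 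The paper sidesteps the entire issue: it tracks the plain martingale $M^n=\int_0^\cdot\sigma(X^n_s)\,dW^n_s$ rather than $W^n$, and convolves the Volterra equation with the resolvent of the first kind $L$ of $K$. This de-convolves the equation to $(L*X^n)_t=(L*g^n)(t)+\bigl(L*((K^n-K)*dY^n)\bigr)_t+Y^n_t$ with $Y^n=\int_0^\cdot b(X^n_s)\,ds+M^n$, so that after Skorokhod no stochastic integral against a moving integrator need be controlled --- only the $\mathbb L^2$-smallness of the $(K^n-K)$ term. The limit $M$ is identified as a local martingale with bracket $\int_0^\cdot\sigma\sigma^*(X_s)\,ds$ via Jacod--Shiryaev (Theorems VI-4.13, VI-6.26), a Brownian motion $W$ with $M=\int_0^\cdot\sigma(X_s)\,dW_s$ is manufactured by martingale representation, and convolving back with $K$ and differentiating recovers \eqref{generalDiffusion}. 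Both routes reach the conclusion, but the paper's avoids the delicate filtration and good-integrator questions you flag as your ``main obstacle'' (you would have to verify that $\tilde W^n$ and $\tilde W$ remain Brownian with respect to a common filtration to which all the integrands are adapted, and keep the Riemann-sum error uniform across those changing filtrations). In exchange, your argument never uses the existence of a first-kind resolvent for $K$, so in principle it applies to a wider class of kernels; within this paper's standing hypotheses, where $L$ is assumed to exist, that extra generality is unused.
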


\section{The particular case of the  rough Heston model}\label{rHestonSec}
The rough Heston model introduced in \cite{euch2016characteristic,euch2017perfect} is a particular case of the class of rough volatility models of Definition \ref{roughVolModel}, with $\sigma(x) = \nu \sqrt{x}$ for some positive parameter $\nu$, that is
	\begin{align*} 
	&\quad \quad\quad\quad\quad dS_t = S_t \sqrt{V_t} dW_t, \quad S_0 >0,\\
	&V_t = g(t) + \int_0^t K(t-s) \left(- \lambda V_s ds + \nu \sqrt{V_s}dB_s\right),
	\end{align*}
	 where $K(t) = \frac{t^{H - \frac12}}{\Gamma(H+1/2)}$ denotes the fractional kernel and $g$ is given by \eqref{casG}. Aside from reproducing accurately the historical and implied volatility, the rough Heston model displays a closed formula for the characteristic function of the log-price in terms of a solution  to a fractional Riccati equation allowing to fast pricing and calibration, see \cite{euch2017roughening}. More precisely, it is shown in \cite{papier2, euch2016characteristic, euch2017perfect} that
$$ L( t,z) = \E\big[ \exp \big( z \log(S_t / S_{0})\big) \big]$$
is given by 
\begin{equation} \label{formula1}
\exp\left( \int_0^t F(z, \psi(t-s,z)) g(s) ds\right), 
\end{equation}
where $\psi(\cdot,z)$ is the unique continuous solution of the fractional Riccati equation
\begin{equation} \label{fracRiccati}
\psi( t,z) = \int_0^t K(t-s) F(z, \psi(s,z))ds, \quad t \in [0,T],
\end{equation}
with $F(z,x) = \frac{1}{2}(z^2 - z) + (\rho \nu z - \lambda) x + \frac{\nu^2}{2} x^2 $ and  $z \in \C$ such that $\Re(z) \in [0,1]$. 
 Unlike the classical case $H = 1/2$, \eqref{fracRiccati} does not exhibit an explicit solution. However, it can be solved numerically through the Adam scheme developed in \cite{diethelm2002predictor,diethelm2004detailed,diethelm1998fracpece, euch2016characteristic} for instance. In this section, we show that the multi-factor approximation applied to the rough Heston model gives rise to another natural numerical scheme for solving the fractional Riccati equation. Furthermore, we will establish the convergence of this scheme with explicit errors. 

\subsection{Multi-factor scheme for the fractional Riccati equation}\label{multiSchemeSec}
We consider the multi-factor approximation $(S^n, V^n)$ of Definition \ref{multifactorModel} with $\sigma(x) = \nu \sqrt{x}$, where the number of factors $n$ is large, that is
$$ dS^n_t = S^n_t \sqrt{V^n_t} dW_t, \quad  V^n_t = g^n(t) + \sum_{i = 1}^n c_i^n V^{n,i}_t,$$
with
$$ dV^{n,i}_t = (-\gamma_i^n V^{n,i}_t - \lambda V_t^n) dt + \nu \sqrt{V_t^n} dB_t, \quad V^{n,i}_0 = 0, \quad S^n_0 = S_0 .$$
Recall that $g^n$ is given by \eqref{casG_n} and it converges pointwise to $g$ as $n$ goes large, see Lemma \ref{lemma1}.\\

We write the dynamics of $(S^n, V^n)$ in terms of a Volterra Heston model with the smoothed kernel $K^n$ given by \eqref{smoothedKernel} as follows
$$ dS^n_t = S^n_t \sqrt{V^n_t} dW_t, $$
$$ V^n_t = g^n(t) - \int_0^t K^n(t-s) \lambda V^n_s ds +  \int_0^t K^n(t-s) \nu \sqrt{V_s^n}  dB_s . $$
In \cite{papier2, ALP17}, the characteristic function formula of the log-price \eqref{formula1} is extended to the general class of Volterra Heston models. In particular, 
$$ L^n(t,z) = \E\big[ \exp \big( z \log(S_t^n / S_0)\big) \big]$$
is given by 
\begin{equation} \label{formula2}
\exp\left( \int_0^t F(z, \psi^n(t-s,z)) g^n(s) ds\right), 
\end{equation}
where $\psi^n( \cdot,z)$ is the unique continuous solution of the Riccati Volterra equation
\begin{equation} \label{volterraRiccati}
\psi^n(t,z) = \int_0^t K^n(t-s) F(z, \psi^n(s,z))ds, \quad t \in [0,T],
\end{equation}
for each  $z \in \C$ with $\Re(z) \in [0,1]$. \\

Thanks to the weak uniqueness of the rough Heston model, established in several works \cite{papier2, ALP17, mytnik2015uniqueness}, and to Theorem \ref{mainResult}, $(S^n, V^n)_{n \geq 1}$ converges in law for the uniform topology to $(S, V)$ when $n$ tends to infinity. In particular, $L^n(t,z)$ converges pointwise to $L(t,z)$. Therefore, we expect $\psi^n(\cdot,z)$ to be close to the solution of the fractional Riccati equation \eqref{fracRiccati}. This is the object of the next theorem, whose proof is reported to Section \ref{convergenceRicattiProof} below.
\begin{theorem} \label{convergenceRiccati} There exists a positive constant $C$ such that, for any $a \in [0,1]$, $b \in \R$ and $n \geq 1$,
$$ \sup_{t \in [0,T]} |\psi^n( t,a+ib) - \psi(t,a+ib)| \leq C (1 + b^4) \int_0^T |K^n(s) - K(s) |ds, $$
where $\psi(\cdot,a+ib)$  (resp.~$\psi^n(\cdot,a+ib)$) denotes the unique continuous solution of the Riccati Volterra equation \eqref{fracRiccati} (resp.~\eqref{volterraRiccati}).
\end{theorem}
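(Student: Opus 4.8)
The plan is to estimate the difference $\psi^n(\cdot,z) - \psi(\cdot,z)$ directly from the Volterra--Riccati equations \eqref{fracRiccati} and \eqref{volterraRiccati}, using a Gr\"onwall-type argument adapted to the convolution structure. Write $z = a+ib$ with $a \in [0,1]$, $b \in \R$, and set $\Delta(t) = \psi^n(t,z) - \psi(t,z)$. Subtracting the two equations,
\begin{equation*}
\Delta(t) = \int_0^t \big(K^n(t-s) - K(t-s)\big) F(z,\psi(s,z))\, ds + \int_0^t K^n(t-s)\big(F(z,\psi^n(s,z)) - F(z,\psi(s,z))\big)\, ds.
\end{equation*}
The first step is an a priori bound: I would show that $\sup_{t\le T}|\psi(t,z)|$ and $\sup_{t\le T}|\psi^n(t,z)|$ are bounded by a constant depending polynomially on $b$ (in fact of order $1+b^2$, since $F(z,0) = \tfrac12(z^2-z)$ has modulus $\lesssim 1+b^2$ and $\Re F$ has the right sign to keep the solution controlled — this is the standard argument giving a uniform a priori estimate for the (fractional) Riccati equation, and it must be made uniform in $n$, using only $\|K^n\|_{1,T} \le C$ which follows from $K^n \to K$ in $\mathbb L^2$ hence the relevant $\mathbb L^1$ bound). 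This is essentially available from \cite{papier2, ALP17} or can be reproved; I would state it as a lemma.

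The second step is to linearize $F$ in its second argument. Since $F(z,\cdot)$ is quadratic, $F(z,x) - F(z,y) = (\rho\nu z - \lambda)(x-y) + \tfrac{\nu^2}{2}(x+y)(x-y)$, so on the region where $|\psi|, |\psi^n| \le C(1+b^2)$ we get a Lipschitz bound $|F(z,\psi^n(s,z)) - F(z,\psi(s,z))| \le C(1+|b|)(1+b^2)\,|\Delta(s)| \le C(1+b^3)|\Delta(s)|$. For the inhomogeneous term, $|F(z,\psi(s,z))| \le C(1+b^4)$ using the a priori bound again (the quadratic term $\tfrac{\nu^2}{2}\psi^2$ contributes $(1+b^2)^2$). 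Hence
\begin{equation*}
|\Delta(t)| \le C(1+b^4)\int_0^t |K^n(t-s) - K(t-s)|\, ds + C(1+b^3)\int_0^t |K^n(t-s)|\,|\Delta(s)|\, ds.
\end{equation*}

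The third step is to close the estimate via the generalized Gr\"onwall inequality for convolutions with an $\mathbb L^1$ kernel (e.g. the resolvent/Gr\"onwall lemma in the Appendix, applied to the kernel $C(1+b^3)K^n$, whose resolvent is controlled uniformly in $n$ because $\sup_n\|K^n\|_{1,T} < \infty$). This yields $\sup_{t\le T}|\Delta(t)| \le C(1+b^4)\int_0^T |K^n(s) - K(s)|\,ds$, after absorbing the $b$-dependence: note the Gr\"onwall constant itself depends on $b$ through $(1+b^3)$, so I must check that this does not worsen the final power of $b$ beyond $b^4$ — it does not, because the exponential Gr\"onwall factor $e^{C(1+b^3)\|K^n\|_{1,T}\cdot T^{\cdots}}$ is not polynomial, which is a problem; so instead I would use the \emph{linear} convolution Gr\"onwall bound in the form $\sup|\Delta| \le \|{\rm(source)}\|_\infty \cdot (1 + \|R\|_{1,T})$ where $R$ is the resolvent of $C(1+b^3)|K^n|$, and bound $\|R\|_{1,T}$ — here one must be careful, since $\|R\|_{1,T}$ for the fractional-type kernel does grow with the coefficient. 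The honest route: iterate the inequality on short subintervals $[0,T/N]$ with $N \sim (1+b^3)$ fixed depending on $b$, on each of which the convolution operator has norm $\le 1/2$, so $\sup_{[0,T/N]}|\Delta| \le 2C(1+b^4)\int_0^T|K^n - K|$, and then propagate across the $N$ subintervals; the number of subintervals multiplies the constant by something polynomial in $b$, but one checks the total stays $O(1+b^4)$ after rechoosing constants — \textbf{this bookkeeping of the $b$-dependence through the Gr\"onwall iteration is the main obstacle}, everything else being routine.

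Finally, I would remark that $\int_0^T|K^n(s)-K(s)|\,ds \to 0$ (which follows from $\mathbb L^2$ convergence on a bounded interval, or directly from Assumption \ref{factorsAssump} as in Section \ref{choiceK}) so the theorem indeed gives a quantitative convergence rate, and combined with \eqref{pasOptimal} one gets an explicit $O(n^{-4H/5})$ bound; but that last observation is not needed for the statement itself.
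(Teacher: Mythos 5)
Your estimate up to the Gr\"onwall step matches the paper's computations (the a priori bound $|\psi|,|\psi^n|=O(1+b^2)$, the exact factorization of the quadratic, and the $O(1+b^4)$ bound on the inhomogeneous term). The decomposition you use --- convolving the linearized difference against $K^n$ and the kernel difference against $F(z,\psi)$ --- is a harmless variant of the paper's, which convolves against $K$ and uses $F(z,\psi^n)$; either works. The genuine gap is the one you flag, and it is not a bookkeeping issue that ``rechoosing constants'' can fix: once you reduce to
$$|\Delta(t)| \leq (\text{source}) + C(1+b^3)\int_0^t |K^n(t-s)|\,|\Delta(s)|\,ds,$$
the Gr\"onwall resolvent of the kernel $C(1+b^3)|K^n|$ behaves like $E_\alpha\bigl(C(1+b^3)t^\alpha\bigr)$, i.e.\ grows exponentially in a power of $b$, and your subdivision over $N\sim 1+b^3$ windows does not rescue this: the contribution $\int_0^{kT/N}K^n(t-s)|\Delta(s)|\,ds$ feeds into window $k+1$ with a non-small coefficient (precisely because $C(1+b^3)\|K^n\|_{1,T}\sim N$), so propagating across the $N$ windows multiplies the bound by a factor of order $c^N$, exponential rather than polynomial in $b$. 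The crude Lipschitz bound $|F(z,\psi^n)-F(z,\psi)|\leq C(1+b^3)|\Delta|$ throws away exactly the structure that prevents this blow-up.

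The paper closes the argument by keeping that structure. It writes $F(z,\psi)-F(z,\psi^n)=\bigl(\rho\nu z-\lambda+\tfrac{\nu^2}{2}(\psi+\psi^n)\bigr)(\psi-\psi^n)$ exactly (as you do) and then observes, via the sign-preservation result Theorem~\ref{T:positive_3}, that $\Re\,\psi(\cdot,z)\leq 0$ and $\Re\,\psi^n(\cdot,z)\leq 0$ --- the very fact you invoke for the a priori bound but then drop when estimating $\Delta$. Hence, although the \emph{modulus} of the coefficient $\rho\nu z-\lambda+\tfrac{\nu^2}{2}(\psi+\psi^n)$ is of order $1+b^2$, its \emph{real part} is bounded above by $\nu-\lambda$, a constant independent of $b$ and $n$. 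The correct tool is then not the scalar Gr\"onwall lemma but the complex comparison principle for linear Volterra equations (Theorem~\ref{positiveC}, packaged as Corollary~\ref{corolC}), which bounds $|h|$ for $h=h_0+K*(zh+w)$ by the solution of a real equation whose coefficient is $\Re(z)$ --- not $|z|$. Applied to $\chi=\psi-\psi^n-h^n$ with $h^n=(K-K^n)*F(z,\psi^n)$, this gives a growth factor $\int_0^T E_{\nu-\lambda}(s)\,ds$ depending only on the kernel and the constant $\nu-\lambda$, never on $b$, so the polynomial dependence on $b$ in the final estimate is inherited entirely from the source term. Without this real-part/comparison observation the Gr\"onwall step cannot deliver a polynomial-in-$b$ bound; it is the essential ingredient missing from your proposal.
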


Relying on the $L^1$-convergence of $(K^n)_{n \geq 1}$  to $K$ under Assumption \ref{factorsAssump}, see Proposition \ref{convergenceK}, we have the uniform convergence of $(\psi^n(\cdot,z))_{n \geq 1}$ to $\psi(\cdot,z)$ on $[0,T]$. Hence, Theorem \ref{convergenceRiccati} suggests a new numerical method for the computation of the fractional Riccati solution \eqref{fracRiccati} where an explicit error is given. Indeed, set 
$$ \psi^{n,i}(t,z) = \int_0^t e^{-\gamma_i^n (t-s)}  F(z, \psi^n(s,z)) ds, \quad  i \in \{1, \dots, n\} . $$
Then,
$$ \psi^n(t,z) = \sum_{i = 1}^n c_i^n \psi^{n,i}(t,z), $$
and $(\psi^{n,i}(\cdot,z))_{1 \leq i \leq n}$ solves the following $n$-dimensional system of ordinary Riccati equations
\begin{equation} \label{dimRiccati}
\partial_t \psi^{n,i}(t,z) = - \gamma_i^n \psi^{n,i}(t,z) + F(z,\psi^n(t,z)), \quad \psi^{n,i}(0,z)=0, \quad   i \in \{1, \dots, n\}.
\end{equation}
Hence, \eqref{dimRiccati} can be solved numerically by usual finite difference methods leading to $\psi^n(\cdot,z)$ as an approximation of the fractional Riccati solution. 

\subsection{Numerical illustrations}\label{illustrationSec}
In this section, we consider a rough Heston model with the following parameters
\begin{equation*}
\lambda = 0.3, \quad \rho = -0.7, \quad \nu = 0.3, \quad H = 0.1, \quad V_0 = 0.02 , \quad \theta \equiv 0.02. 
\end{equation*}
We discuss the accuracy of the multi-factor approximation sequence $(S^n, V^n)_{n \geq 1}$ as well as the corresponding Riccati Volterra solution $(\psi^n(\cdot,z))_{n \geq 1}$, for different choices of the weights $(c_i^n)_{1 \leq i \leq n}$ and mean reversions $(\gamma_i^n)_{1 \leq i \leq n}$. This is achieved by first computing, for different number of factors $n$, the implied volatility $\sigma^n(k, T)$ of maturity $T$ and log-moneyness $k$ by a Fourier inversion of the   characteristic function formula \eqref{formula2}, see \cite{carr1999option, lewis2001simple} for instance. In a second step, we compare $\sigma^n(k,T)$  to the implied volatility $\sigma(k, T)$ of the rough Heston model. We also compare the Riccati Volterra solution $\psi^n(T, z)$ to the fractional one $\psi(T,z)$. \\

\noindent Note that the Riccati Volterra solution $\psi^n(\cdot,z)$ is computed by solving numerically the $n$-dimensional Riccati equation \eqref{dimRiccati} with a classical finite difference scheme. The complexity of such scheme is $O(n \times n_{\Delta t})$, where $n_{\Delta t}$ is the number of time steps applied for the scheme, while the complexity of the Adam scheme used for the computation of $\psi(\cdot,z)$ is $O(n_{\Delta t}^2)$. In the following numerical illustrations, we fix $n_{\Delta t} = 200$. \\

In order to guarantee the convergence, the weights and mean reversions have to satisfy Assumption \ref{factorsAssump} and in particular they should be of the form \eqref{E:cgamma} in terms of auxiliary mean reversions $(\eta_i^n)_{0\leq i\leq n}$ satisfying \eqref{condFacteurs}. For instance, one can fix
\begin{equation} \label{factorChoice1} 
\eta_i^n = i \pi_n  , \quad i \in \{0, \dots, n\} ,
\end{equation}
where $\pi_n$ is defined by \eqref{pasOptimal}, as previously done in Section \ref{choiceK}. For this particular choice, Figure \ref{figL2_ric} shows a decrease of the relative error $\left| \frac{\psi^n(T, ib) - \psi(T,ib)}{\psi(T,ib)}\right|$ towards zero for different values of $b$. 
\begin{center}
\includegraphics[scale=0.55]{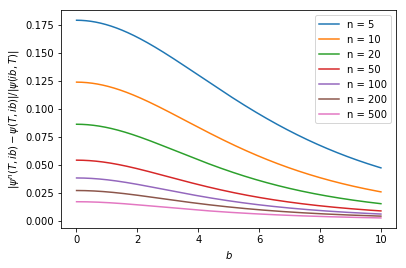}
\captionof{figure}{The relative error $\left| \frac{\psi^n(T, ib) - \psi(T,ib)}{\psi(T,ib)}\right|$ as a function of $b$ under \eqref{factorChoice1} and for different numbers of factors $n$ with $T = 1$.}
\label{figL2_ric}
\end{center}
We also observe in the Figure \ref{figL2} below that the implied volatility $\sigma^n(k,T)$ of the multi-factor approximation is close to $\sigma(k,T)$ for a number of factors $n \geq 20$. Notice that the approximation is more accurate around the money.
\begin{center}
\includegraphics[scale=0.55]{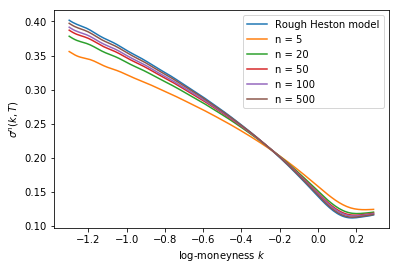}
\captionof{figure}{Implied volatility $\sigma^n(k, T)$ as a function of the log-moneyness $k$ under \eqref{factorChoice1} and for different numbers of factors $n$ with $T = 1$.}
\label{figL2}
\end{center}

In order to obtain a more accurate convergence, we can minimize the upper bound $f_n^{(2)}((\eta_i^n)_{0 \leq i \leq n})$ of $ \|K^n - K\|_{2, T} $ defined in \eqref{f_n_2}. Hence, we choose $(\eta_i^n)_{0 \leq i \leq n}$ to be a solution of the constrained minimization problem
\begin{equation} \label{factorChoice2}
 \inf_{(\eta_i^n)_i \in {\cal E}_n} f_n^{(2)}((\eta_i^n)_{0 \leq i \leq n}),
\end{equation}
where  ${\cal E}_n = \{(\eta_i^n)_{0 \leq i \leq n} ; \quad 0 = \eta_0^n < \eta_1^n < ... < \eta_n^n \}$.

\begin{center}
\includegraphics[scale=0.55]{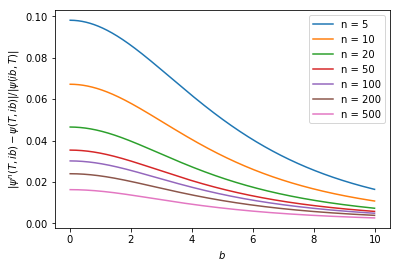}
\captionof{figure}{The relative error $\left| \frac{\psi^n(T,ib) - \psi(T,ib)}{\psi(T,ib)}\right|$ as a function of $b$ under \eqref{factorChoice2} and for different numbers of factors $n$ with $T = 1$.}
\label{figL2_ric_min}
\end{center}
We notice from Figure \ref{figL2_ric_min}, that the relative error $| \frac{\psi^n(T,ib) - \psi(T,ib)}{\psi(T,ib)}|$ is smaller under the choice of factors \eqref{factorChoice2}. Indeed the Volterra approximation $\psi^n(T,ib)$ is now closer to the fractional Riccati solution $\psi(T, ib)$  especially for small number of factors. However, when $n$ is large, the accuracy of the approximation seems to be close to the one under \eqref{factorChoice1}. For instance when $n = 500$, the relative error is around $1\%$ under both \eqref{factorChoice1} and \eqref{factorChoice2}.

\begin{center}
\includegraphics[scale=0.55]{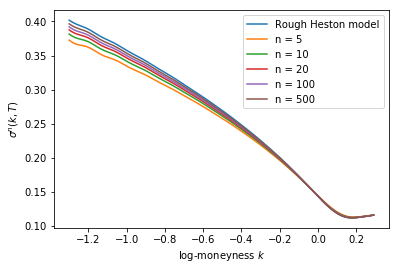}
\captionof{figure}{Implied volatility $\sigma^n(k, T)$ as a function of the log-moneyness $k$ under \eqref{factorChoice2} and for different numbers of factors $n$ with $T = 1$.}
\label{figL2_min}
\end{center}
In the same way, we observe in Figure \ref{figL2_min} that the accuracy of the implied volatility approximation $\sigma^n(k,T)$ is more satisfactory under \eqref{factorChoice2} especially for a small number of factors.\\

\noindent Theorem \ref{convergenceRiccati} states that the convergence of $\psi^n(\cdot,z)$ depends actually on the $\mathbb L^1([0,T], \R)$-error between $K^n$ and $K$. Similarly to the computations of Section \ref{choiceK}, we may show that,
$$ \int_0^T |K^n(s) - K(s)| ds \leq f_n^{(1)}((\eta_i^n)_{0 \leq i \leq n}),$$
where
$$f_n^{(1)}((\eta_i^n)_{0 \leq i \leq n}) = \frac{T^{3}}{6} \sum_{i = 1}^n  \int_{\eta_{i-1}^n}^{\eta_i^n}(\gamma - \gamma_i^n)^2 \mu(d\gamma) + \frac{1}{\Gamma(H+3/2) \Gamma(1/2-H) } (\eta^n_n)^{-H - \frac12}. $$
This leads to choosing $(\eta_i^n)_{0 \leq i \leq n}$ as a solution of the constrained minimization problem
\begin{equation} \label{factorChoice3}
 \inf_{(\eta_i^n)_i \in {\cal E}_n} f_n^{(1)}((\eta_i^n)_{0 \leq i \leq n}).
\end{equation}
It is easy to show that such auxiliary mean-reversions $(\eta_i^n)_{0 \leq i \leq n}$ satisfy \eqref{condFacteurs} and thus Assumption \ref{factorsAssump} is met. 
\begin{center}
\includegraphics[scale=0.55]{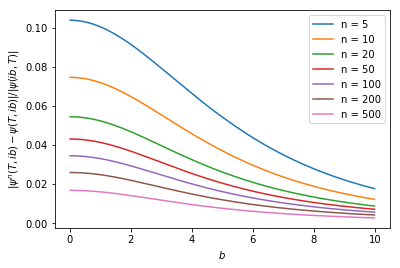}
\captionof{figure}{The relative error $\left| \frac{\psi^n(T,ib) - \psi(T,ib)}{\psi(T,ib)}\right|$ as a function of $b$ under \eqref{factorChoice3} and for different numbers of factors $n$ with $T = 1$.}
\label{figL1_ric_min}
\end{center}
\begin{center}
\includegraphics[scale=0.55]{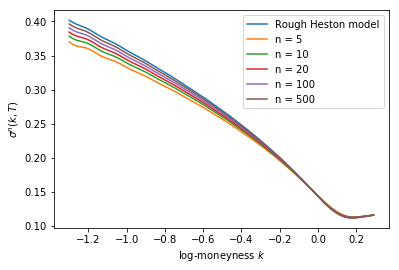}
\captionof{figure}{Implied volatility $\sigma^n(k, T)$ as a function of the log-moneyness $k$ under \eqref{factorChoice3} and for different numbers of factors $n$ with $T = 1$.}
\label{figL1_min}
\end{center}

Figures \ref{figL1_ric_min} and \ref{figL1_min} exhibit similar results as the ones in Figures \ref{figL2_ric_min} and \ref{figL2_min} corresponding to the choice of factors \eqref{factorChoice2}. In fact, we notice in practice that the solution of the minimization problem \eqref{factorChoice2} is close to the one in \eqref{factorChoice3}.

\subsection{Upper bound for call prices error}
Using a Fourier transform method, we can also provide an error between the price of the call $C^n(k,T) = \E[(S^n_T - S_0 e^k)_+]$ in the multi-factor model and the price of the same call $C(k, T) = \E[(S_T-S_0 e^k)_+]$ in the rough Heston model. However, for technical reasons, this bound is obtained for a modification of the multi-factor approximation $(S^n, V^n)_{n \geq 1}$ of Definition \ref{multifactorModel} where the function $g^n$ given initially by \eqref{casG_n} is updated into
\begin{equation} \label{casGn_2}
g^n(t) = \int_0^t K^n(t-s) \big(V_0 \frac{s^{-H-\frac12}}{\Gamma(1/2-H)} + \theta(s)\big) ds,
\end{equation}
where $K^n$ is the smoothed approximation \eqref{smoothedKernel} of the fractional kernel. Note that the strong existence and uniqueness of $V^n$ is still directly obtained from Proposition \ref{T:Yamada} and its non-negativity from Theorem \ref{existencePositive} together with Remarks \ref{positiveFrac} and \ref{postiiveG_rem} in the Appendix\footnote{Note that Theorem \ref{existencePositive} is used here for the smoothed kernel $K^n$, $b(x) = - \lambda x$ and $g^n$ defined by \eqref{casGn_2}.}. Although for $g^n$ satisfying \eqref{casGn_2}, $(V^n)_{n \geq 1}$ can not be tight\footnote{In fact, $V^n_0 = 0$ while $V_0$ may be positive.}, the corresponding spot price $(S^n)_{n \geq 1}$ converges as shown in the following proposition.
\begin{proposition} \label{ConvergenceHeston} Let $(S^n, V^n)_{n \geq 1}$ be a sequence of multi-factor Heston models as in Definition \ref{multifactorModel} with $\sigma(x) = \nu \sqrt{x}$ and $g^n$ given by \eqref{casGn_2}. Then, under Assumption \ref{factorsAssump}, $(S^n, \int_0^\cdot V^n_s ds)_{n \geq 1}$ converges in law for the uniform topology to $(S, \int_0^\cdot V_s ds)$, where $(S, V)$ is a rough Heston model as in Definition \ref{roughVolModel} with $\sigma(x) = \nu \sqrt{x}$. 
\end{proposition}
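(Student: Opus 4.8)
The only difference with the setting of Theorem~\ref{mainResult} is the input function $g^n$ of~\eqref{casGn_2}, which satisfies $g^n(0)=0$ whereas a rough Heston model starts from $V_0>0$; consequently $(V^n)_{n\geq1}$ cannot be tight for the uniform topology and neither Theorem~\ref{mainResult} nor the stability result Theorem~\ref{stabilityTheorem} applies directly. The plan is therefore to work with the integrated pair $(S^n,Z^n)$, $Z^n:=\int_0^\cdot V^n_s\,ds$, for which the $t=0$ defect is harmless, and to carry out three steps: (i) show that the modified $g^n$ still converges pointwise to the genuine rough Heston input $g$ of~\eqref{casG} and is uniformly bounded; (ii) deduce uniform moment bounds on $V^n$ and hence tightness of $(S^n,Z^n)_{n\geq1}$; (iii) identify every limit point by passing to the limit in the affine transform formula~\eqref{formula2} instead of in the Volterra equation. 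By the weak uniqueness of the rough Heston model recalled in Section~\ref{rHestonSec} (\cite{papier2,ALP17,mytnik2015uniqueness}), step (iii) amounts to showing that the finite-dimensional distributions of $(S^n,Z^n)$ converge to those of $(S,\int_0^\cdot V_s\,ds)$ for a rough Heston $(S,V)$.

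For step (i), the Beta-integral identity $\int_0^t K(t-s)\,\frac{s^{-H-1/2}}{\Gamma(1/2-H)}\,ds=1$ shows that, with the fractional kernel, the right-hand side of~\eqref{casGn_2} equals $V_0+\int_0^t K(t-s)\theta(s)\,ds=g(t)$; thus $g^n(t)=\int_0^t K^n(t-s)\bar h(s)\,ds$ and $g(t)=\int_0^t K(t-s)\bar h(s)\,ds$ with $\bar h(s):=V_0\frac{s^{-H-1/2}}{\Gamma(1/2-H)}+\theta(s)\in\mathbb L^1([0,T])$. A Jensen argument on each cell $[\eta_{i-1}^n,\eta_i^n]$ (convexity of $\gamma\mapsto e^{-\gamma t}$, together with the choice~\eqref{E:cgamma} of $\gamma_i^n$ as the $\mu$-barycenter of the cell) yields the pointwise bound $0\leq K^n\leq K$; hence $0\leq g^n\leq g$ with $g$ bounded on $[0,T]$, and, combined with $K^n\to K$ in $\mathbb L^1$ from Proposition~\ref{convergenceK}, dominated convergence gives both $g^n(t)\to g(t)$ for every $t\in[0,T]$ and $\sup_{n}\sup_{t\leq T}g^n(t)<\infty$. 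The uniform boundedness of the sources — which survives even though $\bar h\notin\mathbb L^2$ — is precisely the gain of the modification~\eqref{casGn_2}.

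For step (ii), inserting $\sup_n\|g^n\|_\infty<\infty$ and $\sup_n(\|K^n\|_{\mathbb L^1}+\|K^n\|_{\mathbb L^2})<\infty$ (again via $0\leq K^n\leq K$, resp.\ Proposition~\ref{convergenceK}) into the standard Volterra moment estimate for~\eqref{multifactorVolterra} with $\sigma(x)=\nu\sqrt x$ gives $\sup_{n\geq1}\sup_{t\leq T}\mathbb E[(V^n_t)^p]<\infty$ for every $p\geq1$. Writing $M^n_t:=\int_0^t\sqrt{V^n_s}\,dW_s$, so that $\langle M^n\rangle=Z^n$ and $\log(S^n_t/S_0)=M^n_t-\tfrac12 Z^n_t$, the Burkholder--Davis--Gundy inequality and these bounds give $\mathbb E[|Z^n_t-Z^n_u|^p]+\mathbb E[|M^n_t-M^n_u|^{2p}]\leq C|t-u|^p$ uniformly in $n$, so by Kolmogorov's criterion $(M^n,Z^n)_{n\geq1}$, hence $(\log(S^n/S_0),Z^n)_{n\geq1}$ and, by continuity of $\exp$, $(S^n,Z^n)_{n\geq1}$, are tight for the uniform topology. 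For step (iii), the finite-dimensional distributions of $(S^n,Z^n)$ are determined, through the Markov property of the $(n+1)$-dimensional affine model of Definition~\ref{multifactorModel} and the affine transform formula of~\cite{papier2,ALP17} (valid for~\eqref{multifactorVolterra} with any input $g^n$), by the solution $\psi^n(\cdot,z)$ of the Riccati--Volterra equation~\eqref{volterraRiccati} and by integrals $\int_0^t F(z,\psi^n(t-s,z))\,g^n(s)\,ds$, with $F$ replaced by $F+w$ for the transform variable $w$ conjugate to $Z^n$. Theorem~\ref{convergenceRiccati} and the $\mathbb L^1$-convergence of $K^n$ give $\psi^n(\cdot,z)\to\psi(\cdot,z)$ uniformly on $[0,T]$; since moreover $0\leq g^n\leq g\in\mathbb L^1([0,T])$ and $F(z,\psi^n(\cdot,z))$ is bounded uniformly in $n$, dominated convergence lets us pass to the limit in all these integrals, and the limits are exactly the corresponding affine transforms of the rough Heston model with input $g$ and the fractional Riccati solution $\psi(\cdot,z)$ of~\eqref{fracRiccati}. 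Together with the tightness from step (ii), this yields the claimed convergence in law.

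\textbf{Main obstacle.} The delicate point is step (iii): since $(V^n)$ is not tight one cannot identify the limit by the martingale-problem argument behind Theorem~\ref{stabilityTheorem}, and must instead route the identification through the affine structure, which requires the \emph{joint} transform of $(S^n,Z^n)$ (and its multi-time analogue), the uniform control of $\psi^n$ and $g^n$, and the matching with the rough Heston transform. A conceivable alternative is to couple $(S^n,V^n)$ with the model $(\widehat S^n,\widehat V^n)$ of Definition~\ref{multifactorModel} (to which Theorem~\ref{mainResult} applies) on the same Brownian motion and to prove $\sup_{t\leq T}\big|\int_0^t(V^n_s-\widehat V^n_s)\,ds\big|\to0$ in probability; there the obstruction becomes the non-Lipschitz square root in the diffusion coefficient, which rules out a direct Gronwall estimate and requires a Volterra argument again exploiting that $0\leq K^n\leq K$ localises the relevant kernels near the origin.
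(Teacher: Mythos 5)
Your steps (i) and (ii) are correct, and the pointwise domination $0\leq K^n\leq K$ obtained from Jensen's inequality and the barycentric choice \eqref{E:cgamma} is a genuinely useful observation that the paper does not exploit: it gives at once the uniform bound $0\leq g^n\leq g$ and the integrable domination needed for pointwise convergence of $g^n$, and from there the uniform moment bounds $\sup_n\sup_{t\le T}\E[(V^n_t)^p]<\infty$ are obtained exactly as in the proof of Theorem~\ref{stabilityTheorem}. Your tightness argument via BDG and Kolmogorov's criterion then gives $C$-tightness of $(M^n,Z^n)$ directly, which is in fact cleaner than the paper's route: the paper only proves $\sup_n\E[\int_0^T V^n_s\,ds]<\infty$ and then invokes \cite[Theorem~VI-4.13]{jacod2013limit}.

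Step (iii), however, is where your route genuinely diverges from the paper's, and it contains the gap you yourself flag but do not close. The paper does \emph{not} go through the characteristic function; it writes, via the stochastic Fubini theorem, the decomposition
\[
\int_0^t V^n_s\,ds=\int_0^t\Big(\int_0^{t-s}(K^n(u)-K(u))\,du\Big)\,dY^n_s+\int_0^t K(t-s)\,Y^n_s\,ds ,
\]
shows that the first (error) term vanishes and the second passes to the limit along any weakly convergent subsequence of $(Z^n,M^{n,1},M^{n,2})$, then identifies the limiting martingales as stochastic integrals against Brownian motions via \cite[Theorem~VI-6.26]{jacod2013limit} and \cite[Theorem~V-3.9]{revuz2013continuous}, and finally invokes weak uniqueness of the rough Heston equation. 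This avoids any need for finite-dimensional identification through transforms. Your plan, by contrast, requires convergence of the \emph{multi-time joint} Fourier–Laplace transform of $(\log S^n,Z^n)$. The formula \eqref{formula2} as stated, and Theorem~\ref{convergenceRiccati}, only give the single-time marginal of $\log S^n_t$; to handle the pair $(\log S^n_t,Z^n_t)$ you would need the Riccati equation with $F$ replaced by $F+w$ together with an analogue of Theorem~\ref{convergenceRiccati} for it (plausible but unwritten), and to handle several times you cannot simply iterate via the Markov property of the $(n+1)$-dimensional state, since that state changes dimension with $n$ and its components do not converge individually. What would actually be needed is the conditional affine transform of \cite{papier2,ALP17} together with a convergence statement for the associated ``forward variance curve''; none of this appears in the proposal. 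So, while the Fourier route is in principle viable, it is materially heavier than the proof you are trying to replace, and as written it does not establish finite-dimensional convergence. The paper's direct passage to the limit in the Volterra equation is the more economical strategy here.
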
 

Note that the characteristic function \eqref{formula2} still holds. Using Theorem \ref{convergenceRiccati} together with a Fourier transform method, we obtain an explicit error for the call prices. We refer to Section \ref{callErrorProof} below for the proof.

\begin{proposition} \label{callError} Let $C(k,T)$ be the  price of the call in the rough Heston model with maturity $T>0$ and log-moneyness $k \in \R$. We denote by $C^n(k, T)$ the price of the call in the multi-factor Heston model of Definition \ref{multifactorModel} such that $g^n$ is given by \eqref{casGn_2}. If $|\rho| < 1$, then there exists a positive constant $c > 0$ such that
$$ |C(k,T) - C^n(k,T) | \leq c \int_0^T |K(s) - K^n(s)| ds, \quad n \geq 1. $$
\end{proposition}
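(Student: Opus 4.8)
\textbf{Proof plan for Proposition \ref{callError}.}
The plan is to express both call prices via a Fourier inversion formula in terms of the characteristic functions $L(t,z)$ and $L^n(t,z)$, and then to control the difference $|L(T,z) - L^n(T,z)|$ uniformly in the Fourier variable using Theorem \ref{convergenceRiccati} and the representations \eqref{formula1}, \eqref{formula2}. Concretely, for a fixed damping parameter $\alpha$ (chosen so that the $(1+\alpha)$-th moment of $S_T$ and $S^n_T$ is finite — this is where $|\rho|<1$ enters, ensuring finiteness of exponential moments of the log-price uniformly in $n$, via an argument on the Riccati solutions of the type used in \cite{euch2017roughening, ALP17}), one has the Carr–Madan type formula
$$
C(k,T) = \frac{e^{-\alpha k}}{\pi} \int_0^\infty \Re\!\left( e^{-iuk}\, \frac{L(T, 1+\alpha+iu)}{(\alpha+iu)(1+\alpha+iu)} \right) du,
$$
and the identical formula for $C^n(k,T)$ with $L$ replaced by $L^n$. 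Subtracting gives
$$
|C(k,T) - C^n(k,T)| \le \frac{e^{-\alpha k}}{\pi} \int_0^\infty \frac{|L(T,1+\alpha+iu) - L^n(T,1+\alpha+iu)|}{|\alpha+iu|\,|1+\alpha+iu|}\, du.
$$

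The core estimate is then a pointwise bound on $|L(T,z) - L^n(T,z)|$ along the line $z = 1+\alpha+iu$ that decays fast enough in $u$ to be integrated against $|\alpha+iu|^{-1}|1+\alpha+iu|^{-1}$. Writing $z = a+ib$ with $a = 1+\alpha$ fixed and $b = u$, I would use \eqref{formula1}--\eqref{formula2} to write
$$
L(T,z) - L^n(T,z) = e^{A(T,z)} - e^{A^n(T,z)}, \quad A(T,z) = \int_0^T F(z,\psi(T-s,z)) g(s)\,ds,
$$
with $A^n$ defined analogously using $\psi^n$ and $g^n$. Since $|e^A - e^{A^n}| \le |A - A^n|\, e^{\max(\Re A, \Re A^n)}$ and $\Re A, \Re A^n \le 0$ (these are moduli of characteristic-function-type quantities bounded by $1$, because $\Re(1+\alpha) = a > 1$ forces $\E[|S_T|^a]$-type control; more carefully one bounds $\Re A$ by a constant independent of $n$ using the uniform moment bound), it suffices to estimate $|A(T,z) - A^n(T,z)|$. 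Splitting this as
$$
|A - A^n| \le \int_0^T |F(z,\psi(T-s,z)) - F(z,\psi^n(T-s,z))|\, |g(s)|\, ds + \int_0^T |F(z,\psi^n(T-s,z))|\, |g(s) - g^n(s)|\, ds,
$$
the first term is handled by Theorem \ref{convergenceRiccati} together with the local Lipschitz property of $x \mapsto F(z,x)$ and the a priori bounds $\sup_{t\le T}|\psi(t,z)|, \sup_{t\le T}|\psi^n(t,z)| \le C(1+|b|^2)$ (again from \cite{euch2017roughening, ALP17} or a direct Grönwall argument on \eqref{fracRiccati}, \eqref{volterraRiccati}), yielding a bound of order $(1+b^{\kappa}) \int_0^T |K^n - K|$; the second term requires controlling $\int_0^T |g^n(s) - g(s)|\,ds$, which with the modified $g^n$ of \eqref{casGn_2} equals $\|(K^n - K) * (V_0 \tfrac{(\cdot)^{-H-1/2}}{\Gamma(1/2-H)} + \theta)\|_{L^1}$ and is bounded by $c\int_0^T |K^n - K|$ using Young's convolution inequality and \eqref{cond1} — this is precisely why the modification \eqref{casGn_2} is introduced, since it puts $g^n - g$ in exact convolution form with a fixed integrable function, unlike the original \eqref{casG_n}.

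The main obstacle, and the step requiring the most care, is making the polynomial-in-$b$ growth of the integrand compatible with the $u^{-2}$ decay coming from the Carr–Madan kernel: one needs the pointwise bound $|L(T,z) - L^n(T,z)| \le C\,p(b)\int_0^T|K^n-K|$ with $p$ a polynomial, \emph{and} an independent bound showing $|L(T,z)|, |L^n(T,z)|$ themselves decay (or at least stay bounded) fast enough in $b$ so that the product against $|\alpha+iu|^{-1}|1+\alpha+iu|^{-1}$ is integrable. Taking the minimum of the two bounds $\min(|L - L^n|,\ |L| + |L^n|)$ and splitting the $u$-integral at $u \sim (\int_0^T|K^n-K|)^{-1/m}$ for a suitable power $m$ gives the clean final estimate $|C(k,T) - C^n(k,T)| \le c \int_0^T |K(s) - K^n(s)|\,ds$, with $c$ absorbing the $k$-dependence through $e^{-\alpha k}$ (which is bounded on the relevant range, or the statement is understood for fixed $k$). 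I would also need to confirm, via Proposition \ref{ConvergenceHeston} and the continuity of $F$, that all these manipulations are justified and that the characteristic function formula \eqref{formula2} indeed persists for the modified $g^n$, as asserted in the paragraph preceding the proposition.
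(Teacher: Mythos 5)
Your outline correctly identifies the skeleton of the argument — a Fourier inversion formula, the representations \eqref{formula1}–\eqref{formula2}, the local Lipschitz bound $|e^{A}-e^{A^n}|\le e^{\max(\Re A,\Re A^n)}|A-A^n|$, and Theorem~\ref{convergenceRiccati} to control $|A-A^n|$. The choice of Carr–Madan with a damping $\alpha>0$ versus the paper's Lewis inversion at $a=1/2$ is immaterial. But there are two substantive issues, the second of which is a genuine gap.

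First, a secondary point on the role of \eqref{casGn_2}: your stated reason (``it puts $g^n-g$ in exact convolution form'') does not distinguish it from \eqref{casG_n}, since with \eqref{casG_n} one already has $g^n-g=(K^n-K)*\theta$. The actual reason the paper modifies $g^n$ is different and more structural: with \eqref{casGn_2}, both $g$ and $g^n$ are pure convolutions $K*h$ and $K^n*h$ with the \emph{same} density $h(s)=V_0 s^{-H-1/2}/\Gamma(1/2-H)+\theta(s)$, so Fubini together with the Riccati equations $\psi=K*F(z,\psi)$ and $\psi^n=K^n*F(z,\psi^n)$ collapses \eqref{formula1}–\eqref{formula2} to \eqref{formula1bis}–\eqref{formula2bis}, in which $K,g,K^n,g^n$ disappear and only $\psi$, $\psi^n$ and $h$ remain. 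Thus $|L-L^n|$ is controlled by $\sup_t|\psi-\psi^n|$ alone, and the two-term decomposition you set up (with a separate $\int|g-g^n|$ piece) is unnecessary.

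Second, and critically: your proposal bounds $e^{\max(\Re A,\Re A^n)}$ by a constant and then tries to repair integrability in the Fourier variable by interpolating between the estimate $|L-L^n|\lesssim p(b)\,\eps$ (with $\eps=\int_0^T|K^n-K|$ and $p$ a polynomial) and a decay bound on $|L|+|L^n|$, splitting the $b$-integral at a threshold depending on $\eps$. That procedure yields a bound of order $\eps^{\theta}$ for some $\theta<1$, \emph{not} the linear bound $c\,\eps$ claimed in the proposition; it would degrade the rate. The missing idea is that, under $|\rho|<1$, the real part $\Re(\psi^n(t,a+ib))$ is bounded \emph{uniformly in $n$} by a quantity that tends to $-\infty$ linearly in $|b|$: this is the content of Proposition~\ref{ineqAuxRic} and Corollary~\ref{corolBoundRic}, which compare $\Re\psi^n$ to an auxiliary real Riccati solution $\phi_\eta^n$ and derive the explicit bound $\Re(\psi^n(t,a+ib))\le\bigl(1-\sqrt{1+(a-a^2+(1-\rho^2)b^2)\nu^2 m(t)^2}\bigr)/(\nu^2 m(t))$ with $m(t)=\inf_{n}\int_0^t E^n_{\rho\nu a-\lambda}(s)\,ds>0$. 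Integrating this against the nonnegative $h$ in \eqref{formula2bis} yields $\Re A^n\le C(1-|b|)$ uniformly in $n$, hence $e^{\max(\Re A,\Re A^n)}\le e^{C(1-|b|)}$ decays exponentially in $|b|$. Combined with the polynomial bound $(1+b^4)\eps$ from Theorem~\ref{convergenceRiccati}, the Fourier integrand is $\frac{1+b^4}{b^2+1/4}\,e^{C(1-|b|)}$, which is integrable in $b$ independently of $n$, and the linear bound $c\,\eps$ follows directly with no splitting. Without this uniform exponential decay of the Riccati solutions (which is where $|\rho|<1$ truly enters), the estimate does not close at the claimed rate.
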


\section{Proofs}\label{proofSec} In this section, we use the convolution notations together with the resolvent definitions of Appendix \ref{ConvolComputations}. We denote by  $c$  any positive constant independent of the variables $t, h$ and $n$ and that may vary from line to line. For any $h \in \R$, we will use the notation $\Delta_h$  to denote the semigroup operator of right shifts defined by $\Delta_h f : t \mapsto f(h+t)$ for any function $f$.\\

We first prove Theorem \ref{stabilityTheorem}, which is the building block of Theorem \ref{mainResult}. Then, we turn to the proofs of the results contained in Section \ref{rHestonSec}, which concern the particular case of the rough Heston model. 
	 
\subsection{Proof of Theorem \ref{stabilityTheorem}} \label{stability_sec_proof}

	\paragraph{Tightness of $(X^n)_{n \geq 1}$ :} We first show that, for any $ p \geq 2$,
	\begin{equation} \label{uniformBoundedness}
	 \sup_{n \geq 1}~ \sup_{t \leq T}~  \E[|X^n_t|^p]  < \infty.
	\end{equation}
	Thanks to Proposition \ref{weakExistence}, we already have
	 \begin{equation} \label{EX_n}
	 \sup_{t \leq T}~\E[|X^n_t|^p] < \infty.
	\end{equation}
         Using the linear growth of $(b,\sigma)$ and \eqref{EX_n} together with Jensen and BDG inequalities, we get 
	$$ \E[|X^n_t|^p] \leq  c \left(\sup_{t\leq T} |g^n(t)|^p + \left( \int_0^{T} |K^n(s)|^2 ds \right)^{\frac{p}2 - 1}  \int_0^t |K^n(t-s)|^2 (1+ \E[|X_s^n|^p]) ds )\right).$$
	Relying on Assumption \ref{regularity_n} and the convergence of $(g^n(0), \int_0^T |K^n(s)|^2 ds)_{n \geq 1}$, $\sup_{t\leq T} |g^n(t)|^p$ and $\int_0^T |K^n(s)|^2 ds$ are uniformly bounded in $n$. This leads to
	$$ \E[|X^n_t|^p] \leq  c \left(1 + \int_0^{t} |K^n(t-s)|^2 \E[|{X_s^n}|^p] ds )\right).$$
	By the Gr\"onwall type inequality in Lemma \ref{gronwall} in the Appendix, we deduce that
	$$ \E[|X^n_t|^p] \leq  c \left(1 +  \int_0^{t} E_c^n(s) ds )\right) \leq c {\left(1 +  \int_0^{T} E_c^n(s) ds )\right)} ,$$
	where $E_c^n \in \mathbb L^1([0,T], \R) $ is the canonical resolvent of $|K^n|^2$  with parameter $c$, defined in Appendix \ref{resolvent2}, and the last inequality follows from the fact that $\int_0^\cdot E_c^n(s)ds $ is non-decreasing by Corollary \ref{resolvent_positive}. The convergence of $|K^n|^2$ to $|K|^2$ in $\mathbb L^1([0,T], \R)$ implies the convergence of $E^n_c$ to the canonical resolvent of $|K|^2$ with parameter $c$  in $\mathbb L^1([0,T], \R)$, see \cite[Theorem 2.3.1]{GLS90}. Thus, $ \int_0^T E_c^n(s) ds $ is uniformly bounded in $n$, yielding \eqref{uniformBoundedness}. \\ 
	
	We now show that $(X^n)_{n \geq 1}$ exhibits the Kolmogorov tightness criterion. In fact, using again the linear growth of ($b$, $\sigma$) and \eqref{uniformBoundedness} together with Jensen and BDG inequalities, we obtain, for any $p\geq2$ and $t, h \geq 0$ such that $t + h \leq T$, 
	$$ \E[|X_{t+h}^n - X_t^n|^p] \leq c \Big(|g^n(t+h) -g^n(t)|^p + \big(\int_0^{T-h}   \! \!\! \! \!|K^n(h+s) - K^n(s)|^2 ds \big)^{p/2} + \big(  \int_0^h  \! \! |K^n(s)|^2 ds\big)^{p/2} \Big). $$
	Hence, Assumption \ref{regularity_n} leads to
	$$ \E[|X_{t+h}^n - X_t^n|^p] \leq c h^{p\gamma}, $$
	and therefore to the tightness of $(X^n)_{n \geq 1}$ for the uniform topology. \\
	
	\paragraph{Convergence of $(X^n)_{n \geq 1}$ :} Let $M^n_t = \int_0^t \sigma(X^n_s) dW_s^n$. As $\langle M^n \rangle_t =  \int_0^t \sigma\sigma^*(X^n_s) ds$, $(\langle M^n \rangle)_{n \geq 1}$ is tight and consequently we get the tightness of $(M^n)_{n \geq 1}$ from \cite[Theorem VI-4.13]{jacod2013limit}. Let $(X, M) = (X_t, M_t)_{t \leq T}$ be a possible limit point of  $(X^n, M^n)_{n \geq 1}$. Thanks to \cite[Theorem VI-6.26]{jacod2013limit}, $M$ is a local martingale and necessarily
	$$ \langle M \rangle_t = \int_0^t \sigma \sigma^*(X_s) ds, \quad t \in [0,T]. $$	
	Moreover, setting $Y^n_t = \int_0^t b(X^n_s) ds + M^n_t$, the assoicativity property \eqref{stochasticFubini} in the Appendix yields
	\begin{equation} \label{equality_n} 
(L*X^n)_t = (L*g^n)(t) + \left(L * \big((K^n - K) * dY^n\big)\right)_t + Y^n_t ,
	\end{equation}
	where $L$ is the resolvent of the first kind of $K$ defined in Appendix \ref{resolvent1}.
	By the Skorokhod representation theorem, we construct a probability space supporting a sequence of copies of $(X^n, M^n)_{ n \geq 1}$ that converges uniformly on $[0,T]$, along a subsequence,  to a copy of $(X, M)$ almost surely, as $n$ goes to infinity. We maintain the same notations for these copies. Hence, we have
	$$ \sup_{t \in [0,T]} |X^n_t - X_t| \rightarrow 0, \quad \sup_{t \in [0,T]} |M^n_t - M_t| \rightarrow 0,  $$
	almost surely, as $n$ goes to infinity. Relying on the continuity and linear growth of $b$ together with the dominated convergence theorem, it is easy to obtain for any $t \in [0, T]$
	$$ (L*X^n)_t \rightarrow (L*X)_t, \quad \int_0^t b(X^n_s) ds \rightarrow \int_0^t b(X_s) ds ,$$
	almost surely as $n$ goes to infinity. Moreover for each $t \in [0,T]$
	$$ {(L*g^n)(t) \rightarrow (L*g)(t)}, $$
	by the uniform boundedness of $g^n$ in $n$ and $t$ and the dominated convergence theorem.  Finally thanks to the Jensen inequality,
	$$  {\E [| \left(L *( (K^n - K) * dY^n)\right)_t|^2] \leq c \sup_{t \leq T} \E[|\left((K^n-K)*dY^n\right)_t|^2]} .$$
	From \eqref{uniformBoundedness} and the linear growth of $(b ,\sigma)$, we deduce
	$$ { \sup_{t \leq T} \E[|\left((K^n-K)*dY^n\right)_t|^2]} \leq c \int_0^T |K^n(s)-K(s)|^2 ds, $$	
	which goes to zero when $n$ is large.
	Consequently, we send $n$ to infinity in \eqref{equality_n} and obtain the following almost surely equality, for each $t \in [0,T]$,
	\begin{equation} \label{equality}  
	{(L*X)_t = (L*g)(t)} + \int_0^t b(X_s)ds + M_t .
	\end{equation}
	Recall also that $ \langle M \rangle = \int_0^\cdot \sigma \sigma^*(X_s) ds $. Hence, by \cite[Theorem V-3.9]{revuz2013continuous}, there exists a $m$-dimensional Brownian motion $W$ such that
	$$ M_t = \int_0^t \sigma(X_s) dW_s , \quad t \in [0,T] .$$ 
	The processes in \eqref{equality} being continuous, we deduce that, almost surely,
	 $$ {(L*X)_t = (L*g)(t)} + \int_0^t b(X_s)ds + \int_0^t \sigma(X_s) dW_s, \quad t \in [0,T].$$
	 We convolve by $K$ and use the associativity property \eqref{stochasticFubini} in the Appendix to get that, almost surely,
	 $$  \int_0^t X_s ds  = \int_0^t g(s) ds + \int_0^t \left(\int_0^s K(s-u) (b(X_u)du + \sigma(X_u) dW_u) \right)ds, \quad t \in [0,T]. $$
	 Finally it is easy to see that the processes above are differentiable and we conclude that $X$ is  solution of the stochastic Volterra equation \eqref{generalDiffusion},  by taking the derivative.
	 
\subsection{Proof of Theorem \ref{mainResult}}\label{mainResultProof}
	 Theorem \ref{mainResult} is easily obtained once we prove the tightness of $(V^n)_{n \geq 1}$ for the uniform topology and that any limit point $V$ is solution of the fractional stochastic integral equation \eqref{roughVol}. This is a direct consequence of Theorem \ref{stabilityTheorem}, by setting $d=m=1$, $g$ and $g^n$ respectively as in \eqref{casG} and \eqref{casG_n}, $b(x) = -\lambda x$, $K$ being the fractional kernel and $K^n(t) = \sum_{i=1}^n c_i^n e^{-\gamma_i^n t}$ its smoothed approximation. Under Assumption \ref{factorsAssump}, $(K^n)_{n \geq 1}$ converges in $\mathbb L^2([0,T], \R)$ to the fractional kernel, see Proposition \ref{convergenceK}. Hence, it is left to show the pointwise convergence of $(g^n)_{n \geq 1}$ to $g$ on $[0,T]$ and that $(K^n, g^n)_{n \geq 1}$ satisfies Assumption \ref{regularity_n}. 
	 \begin{lemma}[Convergence of $g^n$] \label{lemma1} Define $g^n : [0,T] \mapsto \R$ and $g : [0,T] \mapsto \R$  respectively by \eqref{casG} and \eqref{casG_n} such that $\theta : [0,T] \mapsto \R$ satisfies \eqref{cond1}. Under assumption \eqref{factorsAssump}, we have for any $t \in [0,T]$
	 $$ g^n(t) \rightarrow g(t) , $$
	 as $n$ tends to infinity.
	 \end{lemma}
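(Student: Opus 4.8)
The plan is to show that the difference $g^n(t) - g(t)$ is controlled by the $\mathbb L^2$-distance between $K^n$ and $K$, which vanishes by Proposition \ref{convergenceK}. Recall that by \eqref{casG} and \eqref{casG_n},
\begin{equation*}
g^n(t) - g(t) = \int_0^t \bigl(K^n(t-s) - K(t-s)\bigr)\theta(s)\, ds.
\end{equation*}
First I would fix $t \in [0,T]$ and split the integral at a small parameter $\delta \in (0,t)$: write the integral over $[t-\delta, t]$ and over $[0, t-\delta]$ separately. On the near-singularity piece $[t-\delta, t]$ (equivalently, the variable $t-s \in [0,\delta]$), I would bound $|\theta(s)|$ using the growth condition \eqref{cond1}: given $\varepsilon>0$, $|\theta(s)| \leq C_\varepsilon s^{-1/2-\varepsilon}$, and estimate
\begin{equation*}
\left| \int_{t-\delta}^t \bigl(K^n(t-s) - K(t-s)\bigr)\theta(s)\, ds \right|
\leq C_\varepsilon \Bigl( \int_{t-\delta}^{t} |K^n(t-s)-K(t-s)|^2\, ds\Bigr)^{1/2} \Bigl( \int_{t-\delta}^t s^{-1-2\varepsilon}\,ds\Bigr)^{1/2},
\end{equation*}
using Cauchy--Schwarz; but since this bound still contains the difference $K^n - K$ which I only control globally, a cleaner route is to bound $\int_0^\delta |K^n(r)|^2 dr$ and $\int_0^\delta |K(r)|^2 dr$ separately — the latter is explicitly $O(\delta^{2H})$ from the fractional kernel, and the former is $O(\delta^{2\gamma})$ uniformly in $n$ once Assumption \ref{regularity_n} is verified for $K^n$ (which is needed anyway for Theorem \ref{mainResult}). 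Either way, the near piece is made smaller than any prescribed tolerance by choosing $\delta$ small, uniformly in $n$.

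On the far piece $[0, t-\delta]$, the kernels are evaluated at arguments bounded below by $\delta$, so $\theta$ is integrable there and the integrand is dominated: by Cauchy--Schwarz,
\begin{equation*}
\left| \int_0^{t-\delta} \bigl(K^n(t-s)-K(t-s)\bigr)\theta(s)\,ds\right|
\leq \|\theta\|_{L^2([0,t-\delta])}\, \|K^n - K\|_{2,T},
\end{equation*}
and $\|\theta\|_{L^2([0,t-\delta])} < \infty$ because $\theta$ satisfies \eqref{cond1} (it has an integrable square singularity only near $0$, hence is square-integrable on all of $[0,T]$ anyway — so in fact one could keep $\delta$ fixed and not even split, but splitting makes the near-zero control transparent). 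Since $\|K^n - K\|_{2,T} \to 0$ by Proposition \ref{convergenceK}, this term vanishes as $n \to \infty$ for each fixed $\delta$.

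Combining the two pieces gives the standard $\varepsilon/2$ argument: given $\varepsilon>0$, pick $\delta$ so the near piece is below $\varepsilon/2$ for all $n$, then pick $N$ so the far piece is below $\varepsilon/2$ for $n \geq N$; hence $|g^n(t) - g(t)| < \varepsilon$ for $n \geq N$, which is the claimed pointwise convergence. I do not expect any real obstacle here; the only mild subtlety is handling the singularity of $\theta$ at $0$, and this is precisely what condition \eqref{cond1} (square-integrability of $\theta$ on $[0,T]$) is designed to absorb — in fact, since $\theta \in \mathbb L^2([0,T],\R)$ under \eqref{cond1}, a one-line Cauchy--Schwarz bound $|g^n(t)-g(t)| \leq \|\theta\|_{2,T}\,\|K^n-K\|_{2,T}$ already suffices, and the $\delta$-splitting above is only needed if one wishes to avoid invoking square-integrability of $\theta$ directly.
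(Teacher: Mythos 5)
There is a genuine gap, and it sits precisely where you declare the proof trivial. The one-line bound $|g^n(t)-g(t)| \leq \|\theta\|_{2,T}\,\|K^n-K\|_{2,T}$ requires $\theta \in \mathbb L^2([0,T],\R)$, and condition \eqref{cond1} does \emph{not} guarantee this. Condition \eqref{cond1} only gives $|\theta(u)| \leq C_\varepsilon u^{-1/2-\varepsilon}$; the square of this bound, $u^{-1-2\varepsilon}$, is not integrable near $0$ for \emph{any} $\varepsilon > 0$. A concrete admissible $\theta$ that is not square-integrable is $\theta(u) = u^{-1/2}$, which satisfies \eqref{cond1} with $C_\varepsilon = T^\varepsilon$. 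In general \eqref{cond1} places $\theta$ in $\mathbb L^p([0,T],\R)$ for every $p<2$ but does not reach $p=2$, which is exactly the exponent your Cauchy--Schwarz pairing with $\|K^n-K\|_{2,T}$ needs. The $\delta$-splitting does not repair this: on the far piece $[0,t-\delta]$ your estimate again invokes $\|\theta\|_{L^2([0,t-\delta])}$, and the singularity of $\theta$ sits at $s=0 \in [0,t-\delta]$, so that norm is infinite in the same circumstances. The near piece is fine; the far piece is where the argument breaks.

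The paper avoids any $L^2$-pairing against $\theta$. It uses the pointwise bound \eqref{cond1} to reduce the claim to showing $\int_0^t (t-s)^{-1/2-\varepsilon}\,|K^n(s)-K(s)|\,ds \to 0$ for each $t$, and then estimates $|K^n(s)-K(s)|$ \emph{pointwise} through the Laplace representation \eqref{Laplace}: the tail contribution $\int_{\eta_n^n}^{\infty} e^{-\gamma s}\,\mu(d\gamma)$ vanishes by dominated convergence since $\eta_n^n \to \infty$, while each block $[\eta_{i-1}^n,\eta_i^n]$ is controlled by the Taylor--Lagrange inequality \eqref{tayllor}, yielding the weight $\tfrac{s^2}{2}\sum_i\int_{\eta_{i-1}^n}^{\eta_i^n}(\gamma-\gamma_i^n)^2\mu(d\gamma)$. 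Integrating against $(t-s)^{-1/2-\varepsilon}$ (which is integrable) and invoking Assumption \ref{factorsAssump} finishes the argument. If you wish to keep a H\"older-type pairing, you would need convergence of $K^n$ to $K$ in $\mathbb L^{p'}$ for some $p'>2$ to pair with $\theta \in \mathbb L^{p}$, $p<2$; this is not what Proposition \ref{convergenceK} provides, and the pointwise Laplace estimate is the more direct route.
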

	\begin{proof}
	Because $\theta$ satisfies \eqref{cond1}, it is enough to show that for each $t \in [0,T]$
	\begin{equation} \label{term_inter}
	\int_0^t (t-s)^{-\frac12 - \varepsilon} |K^n(s) - K(s)| ds 
	\end{equation}
	converges to zero as $n$ goes large, for some $\varepsilon > 0$ and $K^n$ given by \eqref{smoothedKernel}. Using the  representation of $K$ as the Laplace transform of $\mu$ as in \eqref{Laplace}, we obtain that \eqref{term_inter} is bounded by
	\begin{equation} \label{term_inter2}
	 \int_0^t (t-s)^{-\frac12 - \varepsilon} \int_{\eta_n^n}^\infty e^{-\gamma s} \mu(d\gamma) ds  + \sum_{i=1}^n \int_0^t (t-s)^{-\frac12 - \varepsilon} |c_i^n e^{- \gamma_i^n s} - \int_{\eta_{i-1}^n}^{\eta_{i}^n} e^{-\gamma s} \mu(d\gamma)| ds .
	 \end{equation}
	 The first term in \eqref{term_inter2} converges to zero for large $n$ by the dominated convergence theorem because $\eta_n^n$ tends to infinity, see Assumption \ref{factorsAssump}. Using the Taylor-Lagrange inequality \eqref{tayllor}, the second term in \eqref{term_inter2} is dominated by
	 \begin{equation*}
	 \frac12 \int_0^t (t-s)^{-\frac12-\varepsilon} s^{2} ds \sum_{i=1}^n \int_{\eta_{i-1}^n}^{\eta_i^n} (\gamma- \gamma_i^n)^2 \mu(d\gamma), 
	 \end{equation*}
	 which goes to zero thanks to Assumption \ref{factorsAssump}.
	\end{proof}

	 \begin{lemma}[$K^n$ satisfying Assumption \ref{regularity_n}]\label{uniform_reg} Under Assumption \ref{factorsAssump}, there exists $C > 0$ such that, for any $t, h \geq 0$ with $t+h \leq T$,
	$$ \sup_{n \geq 1} ~ \left(\int_0^{T-h} | K^n(h+s) - K^n(s) |^2 ds + \int_0^h |K^n(s)|^2 ds \right) \leq C h^{2H}, $$
	where $K^n$ is defined by \eqref{smoothedKernel}.
	 \end{lemma}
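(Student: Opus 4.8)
The plan is to deduce both estimates from the single pointwise domination
\[
0 \le K^n(s) \le K(s), \qquad s \ge 0,\ n \ge 1,
\]
which relies only on the form \eqref{E:cgamma} of the weights and mean reversions (the convergence requirements in Assumption~\ref{factorsAssump} are not needed for this lemma). To prove it, I would fix $s \ge 0$: the map $\gamma \mapsto e^{-\gamma s}$ is convex, and by \eqref{E:cgamma} the number $\gamma_i^n$ is the barycentre of $\mu$ restricted to the interval $[\eta_{i-1}^n,\eta_i^n)$, a set of $\mu$-mass $c_i^n$. Jensen's inequality then gives $c_i^n e^{-\gamma_i^n s} \le \int_{\eta_{i-1}^n}^{\eta_i^n} e^{-\gamma s}\,\mu(d\gamma)$, and summing over $i \in \{1,\dots,n\}$ together with \eqref{Laplace} yields $K^n(s) = \sum_{i=1}^n c_i^n e^{-\gamma_i^n s} \le \int_0^{\eta_n^n} e^{-\gamma s}\,\mu(d\gamma) \le \int_0^\infty e^{-\gamma s}\,\mu(d\gamma) = K(s)$; non-negativity is clear, and $K^n$ is non-increasing because all $c_i^n,\gamma_i^n \ge 0$.

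The second term in the lemma is then immediate: $\int_0^h |K^n(s)|^2\,ds \le \int_0^h K(s)^2\,ds = \Gamma(H+\tfrac12)^{-2}\int_0^h s^{2H-1}\,ds = \tfrac{h^{2H}}{2H\,\Gamma(H+1/2)^2}$. For the increment term — the one step needing an idea — I would use that $K^n$ is non-negative and non-increasing, so $0 \le K^n(s) - K^n(s+h) \le K^n(s) \le K(s)$, which gives both $|K^n(s+h) - K^n(s)|^2 \le K^n(s)^2 \le K(s)^2$ and $|K^n(s+h)-K^n(s)|^2 \le \big(K^n(s) - K^n(s+h)\big)K(s)$. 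If $h > T/2$ then $T-h < h$ and the first bound already gives $\int_0^{T-h}|K^n(s+h)-K^n(s)|^2\,ds \le \int_0^{T-h} K(s)^2\,ds \le \int_0^h K(s)^2\,ds$. If $h \le T/2$, I would integrate the second bound over $[0,T-h]$ and substitute $u = s+h$ in the cross term to obtain
\[
\int_0^{T-h}|K^n(s+h)-K^n(s)|^2\,ds \le \int_0^{T-h} K^n(s)K(s)\,ds - \int_h^{T} K^n(u)K(u-h)\,du .
\]
Since $K$ is non-increasing and $K^n,K \ge 0$, one has $\int_h^{T} K^n(u)K(u-h)\,du \ge \int_h^{T-h} K^n(u)K(u)\,du$ (discard the integral over $[T-h,T]$, which is non-negative, and use $K(u-h)\ge K(u)$ on $[h,T-h]$), so the right-hand side is at most $\int_0^h K^n(s)K(s)\,ds \le \int_0^h K(s)^2\,ds = \tfrac{h^{2H}}{2H\,\Gamma(H+1/2)^2}$. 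Adding the two contributions proves the lemma with $C = \tfrac{1}{H\,\Gamma(H+1/2)^2}$.

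The crux here — really the only thing to spot — is this linearisation. Attacking $\int_0^{T-h}|K^n(s+h)-K^n(s)|^2\,ds$ head-on through the atomic representation $\mu^n = \sum_i c_i^n \delta_{\gamma_i^n}$ reduces (after Cauchy--Schwarz and $K^n \le K$) to controlling $\int (1-e^{-\gamma h})^2 \gamma^{-H-1/2}\,\mu^n(d\gamma)$, which forces a delicate block-by-block Jensen comparison around the crossover scale $\gamma \sim 1/h$ — the block straddling $1/h$ being the awkward case, where one would have to invoke that the $\mu$-barycentre of an interval is comparable to its right endpoint. The domination $K^n \le K$ combined with the trivial inequality $a^2 \le ab$ for $0 \le a \le b$ bypasses all of this, and even produces a constant depending only on $H$.
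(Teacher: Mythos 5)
Your proof is correct and takes a genuinely different, more elementary route than the paper's. The paper proceeds by proving $\mathbb L^2$-H\"older estimates on the \emph{difference} $K^n - K$ and on the difference of its $h$-shifts, using the Laplace representation of $K$, a Taylor--Lagrange bound on $\gamma \mapsto e^{-\gamma t} - e^{-\gamma(t+h)}$ (displayed in \eqref{ineq_taylor2}), and the known H\"older regularity of the fractional kernel; the resulting constant involves the uniform bound on $\sum_i \int_{\eta_{i-1}^n}^{\eta_i^n}(\gamma-\gamma_i^n)^2\mu(d\gamma)$ coming from Assumption~\ref{factorsAssump}. You instead establish the pointwise domination $0 \le K^n \le K$ directly from \eqref{E:cgamma} via Jensen's inequality (a fact the paper never exploits), combine it with the monotonicity of $K^n$ and the scalar inequality $a^2 \le ab$ for $0 \le a \le b$, and telescope. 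This buys three things the paper's argument does not: (i) the bound is proved under \eqref{E:cgamma} alone, without the convergence condition \eqref{condFacteurs}; (ii) the constant is fully explicit, $C = H^{-1}\Gamma(H+1/2)^{-2}$, and depends only on $H$; (iii) no asymptotic or Taylor estimates are needed, only monotonicity and convexity. Your telescoping step (passing from $\int_0^{T-h}(K^n(s)-K^n(s+h))K(s)\,ds$ to $\int_0^h K^n K\,ds$ by shifting, discarding the tail, and using $K(u-h)\ge K(u)$) is sound, including the case split at $h = T/2$, and the Jensen argument for $K^n \le K$ is airtight since $\gamma_i^n$ is exactly the $\mu$-barycentre of $[\eta_{i-1}^n,\eta_i^n]$.
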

	 
	 \begin{proof} We start by proving that for any $t, h \geq 0$ with $t+h \leq T$
	 \begin{equation} \label{reg1}
	  \int_0^h |K^n(s)|^2 ds \leq c h^{2H}.
	 \end{equation}
	 In fact we know that this inequality is satisfied for $K(t) = \frac{t^{H-\frac12}}{\Gamma(H+1/2)}$. Thus it is enough to prove 
	 $$  \|K^n - K\|_{2,h}  \leq c h^{H}, $$
	 where $\|\cdot\|_{2,h}$ stands for the usual $\mathbb L^2([0,h], \R)$ norm.  Relying on the Laplace transform representation of $K$ given by \eqref{Laplace}, we obtain
	 	 $$ \| K^n - K \|_{2,h}  \leq \int_{\eta_n^n}^\infty {\| e^{-\gamma (\cdot)} \|_{2,h}} \mu(d\gamma) + \sum_{i = 1}^n J_{i,h}^n, $$
	where $J_{i,h}^n = {\| c_i^n e^{- \gamma_i^n (\cdot)} - \int_{\eta_{i-1}^n}^{\eta_i^n} e^{-\gamma (\cdot)} \mu(d\gamma) \|_{2, h}} $. We start by bounding the first term,
	\begin{align*}  
	\int_{\eta_n^n}^\infty {\| e^{-\gamma (\cdot)} \|_{2,h}} \mu(d\gamma) &\leq  \int_{0}^\infty {\sqrt{\frac{1-e^{-2\gamma h}}{2\gamma}}} \mu(d\gamma)\\
	 &={ \frac{h^{H}}{\Gamma(H+1/2) \Gamma(1/2-H)\sqrt{2}} \int_{0}^\infty \sqrt{\frac{1-e^{-2\gamma}}{\gamma}} \gamma^{-H-\frac12}d\gamma }.  
	\end{align*}
	As in Section \ref{choiceK}, we use the Taylor-Lagrange inequality \eqref{tayllor} to get  
	$$ \sum_{i = 1}^{n} J_{i,h}^n \leq \frac1{2\sqrt5} h^{\frac52} \sum_{i=1}^n \int_{\eta_{i-1}^n}^{\eta_i^n} (\gamma- \gamma_i^n)^2 \mu(d\gamma) .$$
	Using the boundedness of $\big( \sum_{i=1}^n \int_{\eta_{i-1}^n}^{\eta_i^n} (\gamma- \gamma_i^n)^2 \mu(d\gamma) \big)_{n \geq 1}$ from Assumption \ref{factorsAssump}, we deduce \eqref{reg1}. 	We now prove
	\begin{equation} \label{reg2}
	  \int_0^{T-h} |K^n(h+s) -K^n(s) |^2 ds \leq c h^{2H}.
	 \end{equation}
	In the same way, it is enough to show 
	$$ \| (\Delta_h K^n - \Delta_h K) - (K^n - K) \|_{2,T-h} \leq c h^H, $$
	Similarly to the previous computations, we get
	 $$ \| (\Delta_h K^n - \Delta_h K) - (K^n - K) \|_{2,T-h} \leq \int_{\eta_n^n}^\infty {\| e^{-\gamma (\cdot)} - e^{-\gamma (h+\cdot)} \|_{2,T-h}} \mu(d\gamma) + \sum_{i = 1}^n {\widetilde J^n_{i,h}}, $$
	 with ${\widetilde J^n_{i,h} = \| c_i^n (e^{- \gamma_i^n (\cdot)} - e^{- \gamma_i^n (h+\cdot)}) - \int_{\eta_{i-1}^n}^{\eta_i^n} (e^{-\gamma (\cdot)}- e^{-\gamma( h+\cdot)})\mu(d\gamma) \|_{2, T-h}} $.  Notice that 
	\begin{align*}
	\int_{\eta_n^n}^\infty \| e^{-\gamma (\cdot)} - e^{-\gamma (h+\cdot)} \|_{2,T-h} \mu(d\gamma) &= \int_{\eta_n^n}^\infty (1-e^{-\gamma h})  \sqrt{\frac{1-e^{-2\gamma(T-h)}}{2 \gamma}} \mu(d\gamma)\\
		 &\leq c \int_0^\infty (1-e^{-\gamma h})   \gamma^{-H-1} d\gamma \leq c h^H.
	\end{align*}  
	Moreover, fix $h,t>0$ and set $\chi(\gamma) = e^{-\gamma t} - e^{-\gamma(t+h)}$. The second derivative reads
	 \begin{equation}  \label{ineq_taylor2}
	 \chi''(\gamma) = h \big(t^{2} \gamma e^{- \gamma t} \frac{1- e^{-\gamma h}}{\gamma h}  - h e^{-\gamma(t+h)} - 2 t e^{-\gamma (t+h)} \big), \quad \gamma>0.
	 \end{equation}
	 Because $x  \mapsto x e^{-x}$ and $x \mapsto \frac{1-e^{-x}}{x}$ are bounded functions on $(0, \infty)$, there exists $C > 0$ independent of $t,h \in [0,T]$ such that
	 $$ |\chi''(\gamma)| \leq C h , \quad \gamma > 0.$$
	 The Taylor-Lagrange formula, up to the second order, leads to
	$$ | c_i^n (e^{- \gamma_i^n t} - e^{- \gamma_i^n (t+h)}) - \int_{\eta_{i-1}^n}^{\eta_i^n} (e^{-\gamma t}- e^{-\gamma( t+h)})\mu(d\gamma)| \leq   \frac{C}{2} h \int_{\eta_{i-1}^n}^{\eta_i^n} (\gamma - \gamma_i^n)^2 \mu(d\gamma) . $$
	Thus
	$$ \sum_{i = 1}^n  {\widetilde J^n_{i,h}} \leq \frac{C}{2} h \sum_{i=1}^n \int_{\eta_{i-1}^n}^{\eta_i^n} (\gamma - \gamma_i^n)^2 \mu(d\gamma) .$$
	Finally, \eqref{reg2} follows from the boundedness of $\big( \sum_{i=1}^n \int_{\eta_{i-1}^n}^{\eta_i^n} (\gamma- \gamma_i^n)^2 \mu(d\gamma) \big)_{n \geq 1}$ due to Assumption \ref{factorsAssump}.
	 \end{proof}
\begin{lemma}[$g^n$ satisfying Assumption \ref{regularity_n}] Define $g^n : [0,T] \mapsto \R$ by \eqref{casG_n} such that $\theta : [0,T] \mapsto \R$ satisfies \eqref{cond1}. Under Assumption \ref{factorsAssump}, for each $\varepsilon>0$, there exists $C_\varepsilon > 0$ such that for any $t, h \geq 0$ with $t + h \leq T$
	 $$ \sup_{n \geq 1}~ |g^n(t) - g^n(t+h)| \leq  C_\varepsilon h^{H-\varepsilon} . $$
	 \end{lemma}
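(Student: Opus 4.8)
The quantity to bound is
\[
g^n(t+h) - g^n(t) = \int_0^{t+h} K^n(t+h-s)\theta(s)\,ds - \int_0^t K^n(t-s)\theta(s)\,ds,
\]
so after the change of variables $s\mapsto t-s$ (resp. $s\mapsto t+h-s$) this equals
\[
\int_0^h K^n(s)\theta(t+h-s)\,ds + \int_0^{t} \bigl(K^n(s+h) - K^n(s)\bigr)\theta(t-s)\,ds .
\]
The first I would bound by Cauchy--Schwarz using Lemma~\ref{uniform_reg} (the bound $\int_0^h |K^n(s)|^2 ds \le C h^{2H}$, uniform in $n$) together with $\int_0^h |\theta(t+h-s)|^2 ds \le \int_0^{T}|\theta(u)|^2 du$, which is finite since \eqref{cond1} gives $|\theta(u)|\le C_\varepsilon u^{-1/2-\varepsilon}$ and $u^{-1-2\varepsilon}$ is integrable near $0$ for $\varepsilon$ small; this yields a term of order $h^{H}$. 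The second term I would split at some cutoff, or rather estimate directly by Cauchy--Schwarz: $\bigl(\int_0^{T-h}|K^n(s+h)-K^n(s)|^2 ds\bigr)^{1/2} \le C h^{H}$ by Lemma~\ref{uniform_reg}, against $\|\theta\|_{2,T}<\infty$, again giving order $h^H$. Combining, $|g^n(t+h)-g^n(t)| \le C_\varepsilon h^H$, which is even slightly stronger than the claimed $h^{H-\varepsilon}$; if a cleaner self-contained argument is wanted one keeps the $\varepsilon$ to absorb the singularity of $\theta$ without having to optimize.

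An alternative, and perhaps the route the authors intend, is to avoid appealing to Lemma~\ref{uniform_reg} twice and instead compare $g^n$ directly to $g(t) = V_0 + \int_0^t K(t-s)\theta(s)\,ds$. One writes $g^n(t+h)-g^n(t) = \bigl(g(t+h)-g(t)\bigr) + \bigl((g^n-g)(t+h) - (g^n-g)(t)\bigr)$. For the fractional kernel $g$, the modulus of continuity $|g(t+h)-g(t)| \le C_\varepsilon h^{H-\varepsilon}$ is classical (and is exactly the content of \eqref{cond1} combined with the known regularity of the Riemann--Liouville fractional integral of a mildly singular function). For the difference $g^n-g$, one uses that $K^n-K$ is bounded in the relevant $\mathbb L^2$-modulus-of-continuity sense uniformly in $n$ — which is precisely what the proof of Lemma~\ref{uniform_reg} establishes along the way (the bounds $\|K^n-K\|_{2,h}\le ch^H$ and $\|(\Delta_h K^n-\Delta_h K)-(K^n-K)\|_{2,T-h}\le ch^H$), so that the same Cauchy--Schwarz estimate against $\theta$ applies and contributes order $h^H$.

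The main obstacle is purely the singularity of $\theta$ at $0$: one must make sure the Cauchy--Schwarz pairing is against an $\mathbb L^2$ (or at worst $\mathbb L^{1+\delta}$) function, and $\theta$ is only $u^{-1/2-\varepsilon}$, which is in $\mathbb L^2$ near $0$ only because the exponent $1+2\varepsilon$ stays below $1$ for small $\varepsilon$ — hence the appearance of $\varepsilon$ in the final exponent if one instead wanted to keep $\theta$ merely in $\mathbb L^1$ with a weight. Everything else is a routine Cauchy--Schwarz plus an application of the two uniform-in-$n$ kernel estimates already proved in Lemma~\ref{uniform_reg}. I would therefore structure the proof as: (i) decompose the increment of $g^n$ into the ``near-diagonal'' piece $\int_0^h K^n(s)\theta(t+h-s)ds$ and the ``shift'' piece $\int_0^t (K^n(s+h)-K^n(s))\theta(t-s)\,ds$; (ii) bound each by Cauchy--Schwarz, invoking $\int_0^h|K^n|^2\le Ch^{2H}$ and $\int_0^{T-h}|K^n(\cdot+h)-K^n|^2\le Ch^{2H}$ uniformly in $n$ from Lemma~\ref{uniform_reg}; (iii) check $\int_0^T|\theta(u)|^2du<\infty$ via \eqref{cond1} for $\varepsilon<1/2$; (iv) conclude with constant $C_\varepsilon$ depending on $\varepsilon$ through $\|\theta\|_{2,T}$.
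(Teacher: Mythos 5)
The decomposition of the increment into the two pieces $\int_0^h K^n(s)\theta(t+h-s)\,ds$ and $\int_0^t (K^n(s+h)-K^n(s))\theta(t-s)\,ds$ is correct and is indeed the one the paper uses, but the estimation of these pieces by Cauchy--Schwarz against the $\mathbb L^2$ norm of $\theta$ has a genuine gap. Step (iii) of your plan asserts that \eqref{cond1} implies $\int_0^T|\theta(u)|^2\,du<\infty$; this is false. Condition \eqref{cond1} only gives $|\theta(u)|\le C_\varepsilon u^{-1/2-\varepsilon}$, and then $|\theta(u)|^2\le C_\varepsilon^2 u^{-1-2\varepsilon}$, whose exponent $1+2\varepsilon$ is $\ge 1$ for every $\varepsilon\ge0$, so the bound is never integrable near $0$. (Your parenthetical ``$u^{-1-2\varepsilon}$ is integrable near $0$ for $\varepsilon$ small'' is a slip.) In fact \eqref{cond1} allows $\theta(u)=u^{-1/2}$ or $\theta(u)=u^{-1/2}\log(e/u)$, neither of which lies in $\mathbb L^2(0,T)$. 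Consequently the $\mathbb L^2$ estimates of Lemma~\ref{uniform_reg} cannot be paired with $\theta$ by Cauchy--Schwarz, and neither of your two routes (direct, or comparison with $g$) closes. A related symptom is that you claim the ``stronger'' rate $h^H$; that cannot be right in general, because for $\theta(u)=u^{-1/2}\log(e/u)$ the increment $g(h)-g(0)$ already behaves like a constant times $h^H\log(1/h)$, so the $\varepsilon$-loss in the statement is genuinely necessary.

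What the paper does instead is apply the pointwise bound $|\theta(u)|\le C_\varepsilon u^{-1/2-\varepsilon}$ directly inside the integrals, reducing the lemma to the two \emph{weighted $\mathbb L^1$} estimates, uniform in $n$:
\[
\int_0^h (h-s)^{-\frac12-\varepsilon}\,|K^n(s)|\,ds\le C\,h^{H-\varepsilon},
\qquad
\int_0^t (t-s)^{-\frac12-\varepsilon}\,|K^n(s+h)-K^n(s)|\,ds\le C\,h^{H-\varepsilon}.
\]
These are not consequences of the $\mathbb L^2$ bounds in Lemma~\ref{uniform_reg}; they are proved afresh by the same Laplace-transform splitting of $K^n-K$ used there (and in the proof of Lemma~\ref{lemma1}): the tail $\int_{\eta_n^n}^\infty$ is dominated by the corresponding integral for $K$ itself, which produces exactly $h^{H-\varepsilon}$ times a Beta-function constant $B(1/2-\varepsilon,H+1/2)$ (whence the loss of $\varepsilon$), and the remaining finite sum is controlled by the second-order Taylor--Lagrange bound \eqref{tayllor} (respectively \eqref{ineq_taylor2}) and Assumption~\ref{factorsAssump}, giving a term of much smaller order in $h$. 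To repair your proof along the lines you sketched, you would either have to replace Cauchy--Schwarz by H\"older with an $\mathbb L^q$ estimate on $K^n$ for some $q>2$ (not available uniformly in $n$ from what is proved) or, more simply, follow the paper's route and prove the weighted $\mathbb L^1$ estimates above.
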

\begin{proof} 
Because $\theta$ satisfies \eqref{cond1}, it is enough to prove that, for each fixed $\varepsilon > 0$, there exists $C>0$ such that 
	\begin{equation} \label{term01}
	\sup_{n \geq 1} \int_0^h (h-s)^{-\frac12 - \varepsilon} |K^n(s)| ds \leq C h^{H-\varepsilon},
	\end{equation}
	and
	\begin{equation} \label{term02}
	\sup_{n \geq 1} \int_0^t (t-s)^{-\frac12 - \varepsilon} |K^n(s) - K^n(h+s)| ds \leq C h^{H-\varepsilon},
	\end{equation}
	for any $t, h \geq 0$ with $t+ h \leq T$. \eqref{term01} being satisfied for the fractional kernel, it is enough to establish 
		$$ \int_0^h (h-s)^{-\frac12 - \varepsilon} |K^n(s) - K(s)| ds \leq c h^{H-\varepsilon}. $$
	In the proof of Lemma \ref{lemma1}, it is shown that 
	$$ \int_0^h (h-s)^{-\frac12 - \varepsilon} |K^n(s) - K(s)| ds $$
	is bounded by \eqref{term_inter2}, that is
	$$  \int_0^h (h-s)^{-\frac12 - \varepsilon} \int_{\eta_n^n}^\infty e^{-\gamma s} \mu(d\gamma) ds  + \sum_{i=1}^n \int_0^h (h-s)^{-\frac12 - \varepsilon} |c_i^n e^{- \gamma_i^n s} - \int_{\eta_{i-1}^n}^{\eta_{i}^n} e^{-\gamma s} \mu(d\gamma)| ds . $$
	The first term is dominated by
	$$ \int_0^h (h-s)^{-\frac12 - \varepsilon} \int_{0}^\infty e^{-\gamma s} \mu(d\gamma) ds = h^{H - \varepsilon} \frac{B(1/2- \varepsilon, H +1/2)}{B(1/2-H, H+1/2)} , $$
	where $B$ is the usual Beta function. Moreover thanks to \eqref{tayllor} and Assumption \ref{factorsAssump}, we get
	$$  \sum_{i=1}^n \int_0^h (h-s)^{-\frac12 - \varepsilon} |c_i^n e^{- \gamma_i^n s} - \int_{\eta_{i-1}^n}^{\eta_{i}^n} e^{-\gamma s} \mu(d\gamma)| ds \leq c h^{\frac52-\varepsilon}, $$
	yielding \eqref{term01}. Similarly, we obtain \eqref{term02} by showing that  
	$$ \int_0^t (t-s)^{-\frac12 - \varepsilon} \left|(K^n(s) - \Delta_hK^n(s)) - (K(s) - \Delta_hK(s)) \right| ds \leq c h^{H-\varepsilon}. $$
	By similar computations as previously and using \eqref{ineq_taylor2}, we get that
	$$ \int_0^t (t-s)^{-\frac12 - \varepsilon} \left|(K^n(s) - \Delta_hK^n(s)) - (K(s) - \Delta_hK(s)) \right| ds $$
	is dominated by
	$$c \left( \int_0^t (t-s)^{\frac12-\varepsilon} \int_{\eta_n^n}^\infty (1-e^{-\gamma h})e^{-\gamma s} \mu(d\gamma) ds + h \sum_{i=1}^n\int_{\eta_{i-1}^n}^{\eta_i^n} (\gamma - \gamma_i^n)^2 \mu(d\gamma) \right). $$
	The first term being bounded by
	$$  \int_0^t (t-s)^{\frac12-\varepsilon} \int_0^\infty (1-e^{-\gamma h})e^{-\gamma s} \mu(d\gamma) ds  = \int_0^t (t-s)^{\frac12-\varepsilon} (K(s) - K(h+s))ds \leq c h^{H - \varepsilon} , $$
	Assumption \ref{factorsAssump} leads to \eqref{term02}.
	
\end{proof}

\subsection{Proof of Theorem \ref{convergenceRiccati}} \label{convergenceRicattiProof}
\paragraph{Uniform boundedness :} We start by showing the uniform boundedness of the unique continuous solutions $(\psi^n(\cdot,a+ib))_{n \geq 1}$ of \eqref{volterraRiccati}.
\begin{proposition} \label{boundednessRiccati} 
For a fixed $T>0$, there exists $C>0$ such that
$$ \sup_{n \geq 1}\sup_{t \in [0,T]} |\psi^n(t,a+ib)| \leq C \left( 1 + b^2  \right) ,$$
for any $a \in [0,1]$ and $b \in \R$. \end{proposition}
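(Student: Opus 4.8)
The plan is to pass from the quadratic Volterra equation \eqref{volterraRiccati} to a linear Volterra--Gr\"onwall estimate by means of an a priori sign (or invariant--region) estimate on $\psi^n$, with every constant made uniform in $n$ through the uniform $L^1$-control of $(K^n)_{n\ge1}$. First I would record the elementary bounds used throughout: for $z=a+ib$ with $a\in[0,1]$ one has $\Re(z^2-z)=a^2-a-b^2\le0$, $|z^2-z|\le c(1+b^2)$ and $|\rho\nu z-\lambda|\le c(1+|b|)$, so that $|F(z,x)|\le c(1+b^2)+c(1+|b|)|x|+\tfrac{\nu^2}{2}|x|^2$; and, about the kernels, $\sup_n\|K^n\|_{L^1([0,T])}<\infty$ (Cauchy--Schwarz together with Proposition \ref{convergenceK}), $\|K^n\|_{L^1([0,h])}\le c\,h^{H+1/2}$ uniformly in $n$ (Cauchy--Schwarz and the computation of Lemma \ref{uniform_reg}), and the canonical $L^1([0,T])$-resolvents of $cK^n$ converge to that of $cK$ and are therefore uniformly bounded in $n$ by \cite[Theorem 2.3.1]{GLS90}.

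The heart of the matter is an a priori estimate preventing the quadratic term $\tfrac{\nu^2}{2}(\psi^n)^2$ from making the bound degrade. I would establish that $\Re\psi^n(t,a+ib)\le0$ for all $t\in[0,T]$, $n\ge1$, $a\in[0,1]$, $b\in\mathbb{R}$ --- the Volterra counterpart of the absence of moment explosion of the Heston Riccati for $\Re z\in[0,1]$. The structural reason is that, on the boundary $\{\Re w=0\}$, completing the square gives $\Re F(z,iq)=\tfrac12(a^2-a)-\tfrac12(1-\rho^2)b^2-\tfrac{\nu^2}{2}(q+\rho b/\nu)^2\le0$, where $a^2-a\le0$ and $1-\rho^2\ge0$ are used in an essential way; since $K^n\ge0$ and $\psi^n(0)=0$, the half-plane $\{\Re w\le0\}$ is then forward invariant for \eqref{volterraRiccati}, which one makes rigorous by a first-passage-time argument for $t\mapsto\Re\psi^n(t,z)$ exactly as for the fractional kernel in \cite{euch2016characteristic,ALP17} (only $K^n\ge0$ enters there, so the argument is kernel-independent). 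A softer alternative uses the characteristic function: for the model of Definition \ref{multifactorModel} with $V_0=0$ and $\theta\equiv1$, \eqref{formula2}--\eqref{volterraRiccati} and Fubini give $L^n(t,z)=\exp\!\big(\int_0^t\psi^n(s,z)\,ds\big)$, and since $S^n/S_0$ is a non-negative local martingale, Jensen's inequality forces $|L^n(t,z)|\le\mathbb{E}[(S^n_t/S_0)^a]\le1$ for $a\in[0,1]$, whence $\int_0^t\Re\psi^n(s,z)\,ds\le0$.

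With such a sign estimate in hand, the quadratic contribution $\tfrac{\nu^2}{2}\Re\bigl((\psi^n)^2\bigr)=\tfrac{\nu^2}{2}\bigl((\Re\psi^n)^2-(\Im\psi^n)^2\bigr)$ is controlled (the dangerous $(\Re\psi^n)^2$ enters the estimate for $-\Re\psi^n=|\Re\psi^n|$ with a non-positive sign and may be discarded, and in the equation for $\Im\psi^n$ the cross term $\nu^2\Re\psi^n\Im\psi^n$ is dissipative), so \eqref{volterraRiccati} can be closed as a linear Volterra inequality for $|\psi^n|$ with forcing of size $O(1+b^2)$; the Gr\"onwall-type Lemma \ref{gronwall}, fed with $|z^2-z|\le c(1+b^2)$ and the uniform $L^1$- and resolvent-bounds above (and Corollary \ref{resolvent_positive} for monotonicity of the resolvents), then yields $\sup_{t\le T}|\psi^n(t,a+ib)|\le C(1+b^2)$ with $C$ independent of $n$. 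The main obstacle is precisely the quadratic nonlinearity: without the sign estimate of the previous paragraph a Gr\"onwall loop cannot be closed --- the term $\int_0^t K^n(t-s)|\psi^n(s)|^2\,ds$, with $\|K^n\|_{L^1}$ bounded away from $0$, would push the bound up to $O((1+b^2)^2)$ --- so the whole argument rests on producing exactly the sign $\Re\psi^n\le0$ that, via $a^2-a\le0$ and $1-\rho^2\ge0$, tames it; a secondary but pervasive point is keeping every constant independent of $n$, which is why the $L^1$-convergence of $K^n$ and of the associated resolvents is invoked rather than crude per-$n$ estimates.
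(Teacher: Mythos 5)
Your two pillars are exactly the paper's: the pointwise sign estimate $\Re\psi^n(\cdot,z)\le 0$, obtained (as in the paper) by recognising that $\Re\psi^n$ solves a linear Volterra equation with non-positive forcing and applying Theorem~\ref{T:positive_3}; and uniformity in $n$ via the $L^1$-convergence of $K^n$ and of the associated canonical resolvents. But the mechanism you describe for then closing the estimate has a genuine gap. Splitting into $u=\Re\psi^n$ and $v=\Im\psi^n$, the $u$-equation reads
$$ u = K^n*\Big(\tfrac12(a^2-a-(1-\rho^2)b^2)-\tfrac12(\rho b+\nu v)^2+\big(\rho\nu a-\lambda+\tfrac{\nu^2}{2}u\big)u\Big) . $$
The forcing is non-positive, which gives $u\le 0$, but it is not $O(1+b^2)$: it contains $-\tfrac12(\rho b+\nu v)^2$, which in the inequality for $|u|=-u$ appears with a \emph{positive} sign as $+\tfrac{\nu^2}{2}v^2$ plus lower-order terms. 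You discard $(\Re\psi^n)^2$ and note the cross term $\nu^2 u v$ in the $v$-equation is dissipative, but you say nothing about this $v^2$ term in the $u$-estimate. The $v$-equation in turn has forcing involving $u$, so one obtains a coupled pair of Volterra inequalities in which $|u|$ is bounded by a quantity containing $|v|^2$ and $|v|$ by a quantity containing $|u|$; substituting one into the other reproduces a quadratic, not a linear, inequality for $|\psi^n|$, and Lemma~\ref{gronwall} does not apply.

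The paper closes the loop with a tool you do not invoke: the comparison principle for \emph{complex-valued} linear Volterra equations, Theorem~\ref{positiveC} and its Corollary~\ref{corolC}. Once $\Re\psi^n\le0$ is known, one does not separate real and imaginary parts at all; one reads the Riccati--Volterra equation as a linear Volterra equation for $\psi^n$ itself,
$$ \psi^n = K^n*\Big(\tfrac12(z^2-z)+\big(\rho\nu z-\lambda+\tfrac{\nu^2}{2}\psi^n\big)\,\psi^n\Big), $$
with complex ``coefficient'' $\rho\nu z-\lambda+\tfrac{\nu^2}{2}\psi^n$ whose real part is $\le\nu-\lambda$, and with forcing $\tfrac12(z^2-z)$ which is genuinely $O(1+b^2)$ and free of $\psi^n$. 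Corollary~\ref{corolC} then compares $|\psi^n|$ with the solution of the scalar equation driven by $\Re(\cdot)$ and yields $\sup_{t\le T}|\psi^n(t,z)|\le\tfrac12|z^2-z|\int_0^T E^n_{\nu-\lambda}(s)\,ds$; the uniform resolvent bound finishes the proof. Without this comparison theorem the quadratic nonlinearity is not tamed, and your sketch does not close. A minor point: your ``softer alternative'' $|L^n(t,z)|\le 1$ yields only $\int_0^t\Re\psi^n(s,z)\,ds\le 0$, which is strictly weaker than the pointwise sign estimate you actually use, so it cannot replace the invariance argument.
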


\begin{proof} Let $z = a + ib$ and start by noticing that $\Re(\psi^n(\cdot,z))$ is non-positive because it solves the following linear Volterra equation with continuous coefficients
$$ \chi = K^n* \left( f + \left(\rho \nu \Re(z) - \lambda + \frac{\nu^2}2  \Re(\psi^n(\cdot,z))\right) \chi\right) ,$$
where 
$$ f = \frac12 \left( a^2-a- (1-\rho^2) b^2 \right) -  \frac{1}{2} (\rho b + \nu\psi^n(\cdot,z))^2 $$
is continuous non-positive, see Theorem \ref{T:positive_3}. In the same way $\Re(\psi(\cdot, z))$ is also non-positive. Moreover, observe that $\psi^n(\cdot,z)$ solves the following linear Volterra equation with continuous coefficients
$$ \chi = K^n* \left(  \frac12 ( z^2-z)  + ( \rho \nu z- \lambda + \frac{\nu^2}{2} \psi^n(\cdot,z) ) \chi \right) ,$$
and 
$$ \Re\left( \rho \nu z- \lambda + \frac{\nu^2}{2} \psi^n(\cdot,z) \right) \leq  \nu  - \lambda. $$
Therefore, Corollary \ref{corolC} leads to
$$ \sup_{t \in [0,T]}|\psi^n(t,z)| \leq \frac{1}{2} |z^2 - z| \int_0^T E^n_{\nu - \lambda}(s) ds, $$
where $E^n_{\nu - \lambda}$ denotes the canonical resolvent of $K^n$ with parameter $ \nu - \lambda$, see Appendix \ref{resolvent2}. This resolvent converges in $\mathbb L^1([0,T], \R)$ because $K^n$ converges in $\mathbb L^1([0,T], \R)$ to $K$, see \cite[Theorem 2.3.1]{GLS90}. Hence,  $(\int_0^T E^n_{ \nu - \lambda}(s) ds)_{n \geq 1}$ is bounded, which ends the proof.

\end{proof}

\paragraph{End of the proof of Theorem \ref{convergenceRiccati} :} Set $z = a + ib$ and recall that 
$$ \psi^n(\cdot,z) = K^n * F(z, \psi^n (\cdot,z)) ; \quad  \psi(\cdot,z) = K * F(z, \psi(\cdot,z)).$$
with 
 $F(z, x) = \frac12\left(z^2-z\right) +(\rho \nu z- \lambda)  x + \frac{\nu^2}{2} x^2 $.  Hence, for $t \in [0,T]$,
 $$ \psi(t,z) - \psi^n(t,z) = h^n(t,z) + K*\big(F(z,\psi(\cdot,z)) - F(z,\psi^n(\cdot,z))\big)(t), $$
 with $h^n(\cdot,z) = (K^n - K) * F(z, \psi^n(\cdot,z))$. Thanks to Proposition \ref{boundednessRiccati}, we get the existence of a positive constant $C$ such that
 \begin{equation} \label{ineqh_n} 
 \sup_{n \geq 1}\sup_{t \in [0,T]}|h^n(t,a+ib) | \leq C (1+b^4) \int_0^T |K^n(s)-K(s) | ds,
 \end{equation}
 for any $b \in \R $ and $a \in [0,1]$. Moreover notice that $(\psi - \psi^n - h^n)(\cdot,z)$ is solution of the following linear Volterra equation with continuous coefficients
$$ \chi =  K * \left(\big(\rho \nu z - \lambda + \frac{\nu^2}{2}(\psi+\psi_n)(\cdot,z)\big) (\chi + h^n(\cdot,z)) \right), $$
and remark that the real part of $\rho \nu z - \lambda + \frac{\nu^2}{2}(\psi+\psi_n)(\cdot,z) $ is dominated by  $ \nu  - \lambda $ because $\Re(\psi(\cdot, z))$ and $\Re(\psi^n(\cdot, z))$ are non-positive. An application of Corollary \ref{corolC} together with \eqref{ineqh_n} ends the proof.

\subsection{Proof of Proposition \ref{ConvergenceHeston}}
We consider for each $n \geq 1$, $(S^n, V^n)$ defined by the multi-factor Heston model in Definition \ref{multifactorModel} with $\sigma(x) = \nu \sqrt{x}$.
\paragraph{Tightness of $(\int_0^\cdot V^n_s ds, \int_0^\cdot \sqrt{V^n_s} dW_s,  \int_0^\cdot \sqrt{V^n_s} dB_s)_{n \geq 1}$ :}
Because the process $ \int_0^\cdot V^n_s ds$ is non-decreasing, it is enough to show that
\begin{equation} \label{boundForTight}
\sup_{n \geq 1}~ \E[\int_0^T V_t^n dt] < \infty, 
\end{equation}
to obtain its tightness for the uniform topology. Recalling that $ \sup_{t \in [0,T]} \E[V^n_t]  < \infty $ from Proposition \ref{weakExistence} in the Appendix, we get
$$ \E \left[ \int_0^t \sqrt{V_s^n} dB_s\right] =0 ,$$ 
and then by Fubini theorem
$$ E[V^{n}_t] = g^n(t) + \sum_{i = 1}^n c_i^n \E[V^{n,i}_t] ,$$
with 
$$ \E[V^{n,i}_t]  = \int_0^t ( - \gamma_i^n \E[V^{n,i}_s] - \lambda \E[V^{n}_s] )ds. $$
Thus $ t \mapsto \E[V^n_t]$ solves the following linear Volterra equation
$$ \chi(t) =   \int_0^t K^n(t-s) \left( -\lambda \chi(s)+ \theta(s)  + V_0 \frac{s^{-H-\frac{1}{2}}}{\Gamma(1/2-H)} \right) ds , $$
with $K^n$ given by \eqref{smoothedKernel}. Theorem \ref{volterraLinear} in the Appendix leads to
$$ \E[V^n_t] = \int_0^t E^n_\lambda(t-s)  \left(\theta(s)  + V_0 \frac{s^{-H-\frac{1}{2}}}{\Gamma(\frac12-H)} \right) ds, $$
and then by Fubini theorem again
$$ \int_0^t \E[V^n_s] ds = \int_0^t \left(\int_0^{t-s}E^n_\lambda(u) du\right) \left(\theta(s)  + V_0 \frac{s^{-H-\frac{1}{2}}}{\Gamma(\frac12-H)}\right) ds, $$
where $E^n_\lambda$ is the canonical resolvent of $K^n$ with parameter $\lambda$, defined in Appendix \ref{resolvent2}. Because $(K^n)_{n \geq 1}$ converges to the fractional kernel $K$ in $\mathbb L^1([0,T], \R)$, we obtain the convergence of $E^n_\lambda$ in $\mathbb L^1([0,T], \R)$ to the canonical resolvent of $K$ with parameter $\lambda$, see \cite[Theorem 2.3.1]{GLS90}. 
In particular thanks to Corollary \ref{resolvent_positive} in the Appendix, $\int_0^t E^n_\lambda(s) ds $ is uniformly bounded in $t \in [0,T]$ and $n \geq 1$. This leads to \eqref{boundForTight} and then to the tightness of $(\int_0^\cdot V^n_s ds, \int_0^\cdot \sqrt{V^n_s} dW_s,  \int_0^\cdot \sqrt{V^n_s} dB_s)_{n \geq 1}$ by \cite[Theorem VI-4.13]{jacod2013limit}.

\paragraph{Convergence of $(S^n, \int_0^\cdot V^n_s ds)_{n \geq 1}$ :} We set $M^{n, 1}_t =  \int_0^t \sqrt{V^n_s} dW_s $ and $M^{n, 2}_t =  \int_0^t \sqrt{V^n_s} dB_s $. Denote by $(X, M^1, M^2)$ a limit in law for the uniform topology of a subsequence of the tight family $(\int_0^\cdot V^n_s ds, M^{n, 1}, M^{n,2})_{n \geq 1}$. An application of stochastic Fubini theorem, see \cite{V:12}, yields
\begin{equation} \label{equali_n}
 \int_0^t V^n_s ds =  \int_0^t \int_0^{t-s}( K^n(u) - K(u)) du  dY^n_s + \int_0^t K(t-s) Y^n_s ds,  \quad t \in [0,T],
 \end{equation}
where $Y^n_t = \int_0^t (s^{-H-\frac12} \frac{V_0}{\Gamma(1/2-H)} +\theta(s)  - \lambda V^n_s ) ds + \nu M^{n,2}_t $. Because $(Y^n)_{n \geq 1}$ converges in law for the uniform topology to $Y =(Y_t)_{t \leq T}$ given by $Y_t = \int_0^t (s^{-H-\frac12} \frac{V_0}{\Gamma(\frac12-H)} +\theta(s)) ds  - \lambda X_t + \nu M^{2}_t$, we also get the convergence of $ (\int_0^\cdot K(\cdot-s) Y^n_s ds)_{n \geq 1}$ to  $\int_0^\cdot K(\cdot-s) Y_s ds$. Moreover, for any $t \in [0,T]$,
$$ \left| \int_0^t \int_0^{t-s}  (K^n(u) - K(u)) du  \left(s^{-H-\frac12} \frac{V_0}{\Gamma(\frac12-H)} +\theta(s)  - \lambda V^n_s \right) ds \right| $$
is bounded by 
$$ \int_0^t |K^n(s) - K(s)| ds \left( \int_0^t (s^{-H-\frac12} \frac{V_0}{\Gamma(\frac12-H)} +\theta(s)) ds + \lambda \int_0^t V^n_s ds \right),  $$
which converges in law for the uniform topology to zero thanks to the convergence of $(\int_0^\cdot V^n_s ds)_{n \geq 1}$ together with Proposition \ref{convergenceK}. Finally,
$$  \mathbb E \left[ \left|\int_0^t \int_0^{t-s}  (K^n(u) - K(u)) du  dM^{n,2}_s \right|^2  \right] \leq c \int_0^T (K^n(s) - K(s))^2 ds \E\left[\int_0^t V_s^n ds \right], $$
 which goes to zero thanks to \eqref{boundForTight} and Proposition \ref{convergenceK}. Hence, by passing to the limit in \eqref{equali_n}, we obtain
 $$ X_t =  \int_0^t K(t-s) Y_s ds , $$
 for any $t \in [0,T]$, almost surely. The processes being continuous, the equality holds on $[0,T]$.
  Then, by the stochastic Fubini theorem, we deduce that $X = \int_0^\cdot V_s ds $, where $V$ is a continuous process defined by
  $$ V_t = \int_0^t K(t-s) dY_s =V_0 + \int_0^t K(t-s) (\theta(s) - \lambda V_s ) ds + \nu \int_0^t K(t-s) dM^2_s . $$
 Furthermore because $(M^{n,1}, M^{n,2})$ is a martingale with bracket 
 $$ \int_0^\cdot V^n_s ds  \begin{pmatrix}
       1    &      \rho        \\
   \rho        &1       
\end{pmatrix} , $$
\cite[Theorem VI-6.26]{jacod2013limit} implies that $(M^1, M^2)$ is a local martingale with the following bracket
$$ \int_0^\cdot V_s ds  \begin{pmatrix}
       1    &      \rho        \\
   \rho        &1       
\end{pmatrix} . $$
By \cite[Theorem V-3.9]{revuz2013continuous}, there exists a  two-dimensional Brownian motion $(\widetilde{W}, \widetilde{B})$ with  $d\langle \widetilde W, \widetilde B \rangle_t= \rho dt$ such that
$$ M^1_t = \int_0^t \sqrt{V_s} d\widetilde{W}_s , \quad M^2_t = \int_0^t \sqrt{V_s} d\widetilde{B}_s , \quad t \in [0,T]. $$
In particular $V$ is solution of the fractional stochastic integral equation in Definition \ref{roughVolModel} with $\sigma(x) = \nu \sqrt{x}$. Because $S^n = \exp(M^{n,1}\ - \frac{1}{2}\int_0^\cdot V^n_s ds )$, we deduce the convergence of $(S^n, \int_0^\cdot V_s^nds)_{n \geq 1}$ to the limit point $(S, \int_0^\cdot V_s ds)$ that displays the rough-Heston dynamics of Definition \ref{roughVolModel}. The uniqueness of such dynamics, see \cite{papier2, ALP17, mytnik2015uniqueness}, enables us to conclude that $(S^n, V^n)_{n \geq 1}$ admits a unique limit point and hence converges to the rough Heston dynamics.

\subsection{Proof of Proposition \ref{callError}} \label{callErrorProof}
We use the Lewis Fourier inversion method, see \cite{lewis2001simple}, to write
$$
C^n(k,T) - C(k ,T) = S_0 \frac{e^{\frac{k}{2}}}{2 \pi}  \int_{b \in \R} \frac{e^{-ibk}}{b^2 + \frac14}  \left(L(T,\frac12+ib) - L^n(T,\frac12+ib) \right) db .
$$
Hence, 
\begin{equation}  \label{lewis}
|C^n(k,T) - C(k ,T)| \leq S_0 \frac{e^{\frac{k}{2}}}{2 \pi}  \int_{b \in \R} \frac{1}{b^2 + \frac14}  \left| L(T,\frac12+ib) - L^n(T,\frac12+ib) \right| db .
\end{equation}
Because  $L(T,z)$ and $L^n(T,z)$ satisfy respectively the formulas \eqref{formula1} and \eqref{formula2} with $g$ and $g^n$ given by
$$ g(t) = \int_0^t K(t-s) \big(V_0 \frac{s^{-H-\frac12}}{\Gamma(1/2-H)} + \theta(s)\big) ds, \quad g^n(t) = \int_0^t K^n(t-s) \big(V_0 \frac{s^{-H-\frac12}}{\Gamma(1/2-H)} + \theta(s)\big) ds,$$
and $\psi(\cdot,z)$ and $\psi^n(\cdot,z)$ solve respectively \eqref{fracRiccati}  and  \eqref{volterraRiccati}, we use the Fubini theorem to deduce that
\begin{equation} \label{formula1bis}
L(T,z) = \exp\left(\int_0^T \psi(T-s,z) \big(V_0 \frac{s^{-H-\frac12}}{\Gamma(1/2-H)} + \theta(s)\big) ds \right),  
\end{equation}
and 
\begin{equation} \label{formula2bis} 
L^n(T,z) = \exp\left(\int_0^T \psi^n(T-s,z) \big(V_0 \frac{s^{-H-\frac12}}{\Gamma(1/2-H)} + \theta(s)\big) ds \right) ,
\end{equation}
with $z = 1/2 + ib$. Therefore, relying on the local Lipschitz property of the exponential function, it suffices to find an upper bound for $\Re(\psi^n(\cdot,z))$ in order to get an error for the price of the call from \eqref{lewis}. This is the object of the next paragraph.
\paragraph{Upper bound of $\Re(\psi^n(\cdot,z))$ :} 
We denote by $\phi_\eta^n(\cdot,b)$ the unique continuous function satisfying the following Riccati Volterra equation
\begin{equation*}
\phi_\eta^n(\cdot,b) = K^n * \left(-b + \eta \phi_\eta^n(\cdot,b) + \frac{\nu^2}{2} \phi_\eta^n(\cdot,b)^2\right) ,
\end{equation*}
with $b \geq 0$ and $\eta, \nu \in \R $. 
\begin{proposition} \label{ineqAuxRic}Fix  $b_0, t_0\geq 0$ and $\eta \in \mathbb R$. The functions $b  \mapsto \phi_\eta^n(t_0,b)$ and $t  \mapsto \phi_\eta^n(t,b_0)$ are non-increasing on $\mathbb R_+$. Furthermore
$$ \phi_\eta^n(t,b) \leq  \frac{1 - \sqrt{1+ 2 b \nu^2 (\int_0^t E_\eta^n(s) ds)^2}}{\nu^2 \int_0^t E_\eta^n(s) ds},  \quad t > 0,$$
where $E_\eta^n $ is the canonical resolvent of $K^n$ with parameter $\eta$ defined in Appendix \ref{resolvent2}.
\end{proposition}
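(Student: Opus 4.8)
The plan is to derive two identities for $\phi:=\phi_\eta^n(\cdot,b)$ and then read off the three assertions. Write $G(x):=-b+\eta x+\tfrac{\nu^2}{2}x^2$, so that $\phi=K^n*G(\phi)$. Since $K^n(t)=\sum_{i=1}^n c_i^n e^{-\gamma_i^n t}$ with $c_i^n,\gamma_i^n>0$ is smooth, positive and strictly decreasing on $[0,\infty)$, the function $\phi$ is $C^1$ with $\phi'=K^n(0)\,G(\phi)+(K^n)'*G(\phi)$; moreover, eliminating the linear term $\eta\phi$ with the canonical resolvent $E_\eta^n$ of $K^n$ with parameter $\eta$ (Appendix~\ref{resolvent2}) gives the fixed-point identity
$$\phi=E_\eta^n*\Big(-b+\tfrac{\nu^2}{2}\phi^2\Big)=-b\,A+\tfrac{\nu^2}{2}\big(E_\eta^n*\phi^2\big),\qquad A(t):=\int_0^t E_\eta^n(s)\,ds,$$
where $E_\eta^n\geq 0$ and $A$ is non-decreasing by Corollary~\ref{resolvent_positive}.

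For the monotonicity in $b$ one observes that, for $0\leq b_1\leq b_2$, the difference $\delta:=\phi_\eta^n(\cdot,b_1)-\phi_\eta^n(\cdot,b_2)$ solves $\delta=K^n*\big((b_2-b_1)+a\,\delta\big)$ with continuous coefficient $a:=\eta+\tfrac{\nu^2}{2}\big(\phi_\eta^n(\cdot,b_1)+\phi_\eta^n(\cdot,b_2)\big)$ and non-negative source $b_2-b_1$, hence $\delta\geq 0$ by the comparison principle for linear Volterra equations (Theorem~\ref{T:positive_3}); since $\phi_\eta^n(\cdot,0)\equiv 0$ by uniqueness, this also yields $\phi_\eta^n(\cdot,b)\leq 0$ for all $b\geq 0$. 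The crux is the structural property that $G(\phi(t))\leq 0$ for every $t$ (equivalently $\phi(t)\geq x_-$, the negative root of $G$; the case $b=0$ is trivial, as then $\phi\equiv 0$). I would prove it by contradiction: if $G(\phi)$ ever becomes positive, let $\tau$ be the first time with $G(\phi(\tau))=0$ --- it exists and $\tau>0$ since $G(\phi(0))=-b<0$ --- so that $G(\phi)\leq 0$ on $[0,\tau]$; then $\phi'(\tau)=K^n(0)G(\phi(\tau))+\int_0^\tau(K^n)'(\tau-s)G(\phi(s))\,ds=\int_0^\tau(K^n)'(\tau-s)G(\phi(s))\,ds\geq 0$ because $(K^n)'<0$, and $\phi'(\tau)=0$ would force $G(\phi)\equiv 0$ on $[0,\tau]$, hence $\phi\equiv 0$ there and $b=0$, a contradiction. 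Thus $\phi'(\tau)>0$, whereas $G(\phi)>0$ at times arbitrarily close to $\tau$ from the right forces $\phi$ to take values strictly below $x_-=\phi(\tau)$ there --- impossible.

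Given $G(\phi)\leq 0$, the monotonicity in $t$ will follow from a shift argument: for $h>0$, setting $\delta_h(t):=\phi(t+h)-\phi(t)$ and splitting $\int_0^{t+h}$ at $s=h$ yields $\delta_h=q_h+K^n*(a_h\,\delta_h)$ with $q_h(t):=\int_0^h K^n(t+h-s)G(\phi(s))\,ds$ and continuous coefficient $a_h(u):=\eta+\tfrac{\nu^2}{2}\big(\phi(u+h)+\phi(u)\big)$; since $G(\phi)\leq 0$ and $K^n\geq 0$, the source $q_h$ is non-positive, so $\delta_h\leq 0$ by Theorem~\ref{T:positive_3}, i.e. $t\mapsto\phi_\eta^n(t,b)$ is non-increasing.

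Finally, for the explicit bound: $\phi$ being non-increasing and non-positive, $s\mapsto\phi(s)^2$ is non-decreasing on $[0,t]$, so with $E_\eta^n\geq 0$ one gets $\big(E_\eta^n*\phi^2\big)(t)\leq\phi(t)^2 A(t)$; inserting this in the fixed-point identity gives $\phi(t)\leq -bA(t)+\tfrac{\nu^2}{2}A(t)\phi(t)^2$, i.e. the upward parabola $x\mapsto\tfrac{\nu^2}{2}A(t)x^2-x-bA(t)$ is non-negative at $x=\phi(t)$. For $t>0$ its roots are $\frac{1\pm\sqrt{1+2b\nu^2A(t)^2}}{\nu^2A(t)}$, and since $\phi(t)\leq 0$ lies strictly below the larger (positive) root, it must be at most the smaller one, which is precisely the asserted bound. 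The main obstacle is the property $G(\phi)\leq 0$: once it is in hand, both monotonicity claims are routine applications of the linear-Volterra comparison principle (Theorem~\ref{T:positive_3}) --- used, exactly as in the proof of Proposition~\ref{boundednessRiccati}, with merely continuous (not sign-definite) coefficients --- and the final estimate is elementary algebra.
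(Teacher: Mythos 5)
Your proofs of the monotonicity in $b$ and of the final algebraic bound match the paper's. The gap is in the monotonicity in $t$. The auxiliary observation $F(b,\phi)\le 0$ everywhere is interesting and (after replacing ``first time $F(b,\phi)=0$'' with $\tau:=\inf\{t>0 : F(b,\phi(t))>0\}$) essentially correct, but it does not close the argument. When you write $\delta_h = q_h + K^n*(a_h\,\delta_h)$ and conclude ``$q_h\le 0$, hence $\delta_h\le 0$ by Theorem~\ref{T:positive_3}'', you are invoking that theorem for $-\delta_h = (-q_h) + K^n*\big(a_h(-\delta_h)\big)$ with input curve $g=-q_h$ and $w=0$. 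But Theorem~\ref{T:positive_3} demands $g\in\Gc_K$, which is strictly stronger than $g\ge 0$: a continuous non-negative input fed into a linear Volterra equation with a variable coefficient $a_h=\eta+\tfrac{\nu^2}{2}(\Delta_h\phi+\phi)$ that can be very negative does not in general produce a non-negative solution (take $K^n\equiv 1$, $a_h\equiv -c$ with $c$ large, and $-q_h$ a bump near the origin; the solution turns negative after the bump). So the step ``source $\le 0 \Rightarrow \delta_h\le 0$'' is a genuine gap as written.

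The paper closes it --- and thereby dispenses entirely with the preliminary claim $F(b,\phi)\le 0$ --- by applying the $\Gc_K$-propagation clause of Theorem~\ref{T:positive_3} to the equation for $-\phi$, namely $-\phi = 0 + K^n*\big((\eta+\tfrac{\nu^2}{2}\phi)(-\phi)+b\big)$ with $g\equiv 0 \in\Gc_K$ and $w\equiv b\ge 0$. That clause certifies that the shifted input $t\mapsto -(\Delta_t K^n*F(b,\phi))(h)$, which is exactly your $-q_h$, belongs to $\Gc_K$; only then does the non-negativity clause of Theorem~\ref{T:positive_3} yield $\phi-\Delta_h\phi\ge 0$. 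To repair your argument you would need to establish $-q_h\in\Gc_K$ (not merely $-q_h\ge 0$), at which point the detour through $F(b,\phi)\le 0$ becomes superfluous.
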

\begin{proof} The claimed monotonicity of $b   \mapsto \phi_\eta^n(t_0,b)$ is directly obtained from Theorem \ref{T:positive_3}. Consider now $h, b_0>0$. It is easy to see that $\Delta_h \phi_\eta^n(\cdot,b_0)$ solves the following Volterra equation
$$ \Delta_h \phi_\eta^n(b_0, t) = \left(\Delta_tK^n * F(\phi_\eta^n(\cdot,b_0))\right)(h) + \left(K^n*F(\Delta_h \phi_\eta^n(\cdot,b_0))\right)(t)$$
with $F(b,x) = -b + \eta x+ \frac{\nu^2}{2} x^2$. Notice that $t \rightarrow -\left(\Delta_tK^n * F(\phi_\eta^n(\cdot,b_0))\right)(h) \in \Gc_K$, defined in Appendix \ref{AppC}, thanks to Theorem \ref{T:positive_3}. $\phi_\eta^n(\cdot,b)  - \Delta_h \phi_\eta^n(\cdot,b)$ being solution of the following linear Volterra integral equation with continuous coefficients,
$$ x(t) = -\left(\Delta_tK^n * F(b, \phi_\eta^n(\cdot,b_0))\right)(h) +\left(K^n * \left( \left( \eta + \frac{\nu^2}{2}(\phi_\eta^n(\cdot,b) + \Delta_h\phi_\eta^n(\cdot,b)) \right) x \right) \right) (t), $$
we deduce its non-negativity using again Theorem \ref{T:positive_3}. Thus, $t \in \mathbb R_+ \rightarrow \phi_\eta^n(t,b_0)$ is non-increasing and consequently $ \sup_{s \in [0,t]} |\phi_\eta(s,b)| =  |\phi_\eta^n(t,b_0)|$ as $\phi_\eta^n(0,b) = 0$. Hence, Theorem \ref{volterraLinear} leads to
$$ \phi_\eta^n(t,b) = \int_0^t E_\eta^n(t-s) (-b + \frac{\nu^2}{2} \phi_\eta^n(s,b)^2)  \leq \int_0^t E_\eta^n(s) ds \left( -b + \frac{\nu^2}{2} \phi_\eta^n(t,b)^2\right) .$$
We end the proof by solving this inequality of second order in $\phi_\eta^n(t,b)$ and using that $\phi_\eta^n$ is non-positive. Notice that $ \int_0^t E_\eta^n(s) ds > 0 $ for each $t >0$, see Corollary \ref{resolvent_positive}.
\end{proof}

\begin{corollary} \label{corolBoundRic}Fix $a \in [0,1]$. We have, for any $t \in (0, T]$ and $b \in \R$,
$$ \sup_{n \geq 1} \Re(\psi^n(t,a+ib)) \leq \frac{1 - \sqrt{1+  (a - a^2 + (1 - \rho^2) b^2) \nu^2 m(t)^2}}{\nu^2 m(t)}  $$ 
where $m(t) = \inf_{n \geq 1} \int_0^t E^n_{\rho \nu a - \lambda}(s) ds > 0 $ for all $t \in (0,T]$ and $E_\eta^n $ is the canonical resolvent of $K^n$ with parameter $\eta$ defined in Appendix \ref{resolvent2}.
\end{corollary}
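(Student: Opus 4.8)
\medskip
\noindent\textbf{Proof strategy.} Fix $z=a+ib$ with $a\in[0,1]$, $b\in\R$, and put $\eta=\rho\nu a-\lambda$ and $\beta=\frac12\big(a-a^2+(1-\rho^2)b^2\big)\ge0$ (note $\nu>0$ and $a-a^2\ge0$ since $a\in[0,1]$). The plan is to dominate $\Re(\psi^n(\cdot,z))$ by the auxiliary Riccati function $\phi^n_\eta(\cdot,\beta)$ of Proposition~\ref{ineqAuxRic}, then substitute the explicit bound proved there, and finally make that bound independent of $n$ through an elementary monotonicity argument.

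\emph{Comparison step.} Recall from the proof of Proposition~\ref{boundednessRiccati} that $\chi:=\Re(\psi^n(\cdot,z))$ is continuous, non-positive, and solves the linear Volterra equation $\chi=K^n*\big(f+(\eta+\frac{\nu^2}{2}\chi)\chi\big)$, whose continuous forcing term satisfies $f\le\frac12(a^2-a-(1-\rho^2)b^2)=-\beta$ on $[0,T]$. Since $\phi:=\phi^n_\eta(\cdot,\beta)$ solves $\phi=K^n*\big(-\beta+\eta\phi+\frac{\nu^2}{2}\phi^2\big)$, I would subtract the two equations and use the factorization $\chi^2-\phi^2=(\chi+\phi)(\chi-\phi)$ to obtain that $w:=\chi-\phi$ solves
$$ w = K^n*\Big((f+\beta)+\big(\eta+\tfrac{\nu^2}{2}(\chi+\phi)\big)\,w\Big), $$
which is again a \emph{linear} Volterra equation with continuous coefficients and a continuous non-positive forcing term $f+\beta$. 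Applying Theorem~\ref{T:positive_3} to $-w$ (forced by $-(f+\beta)\ge0$) then yields $w\le0$, that is, $\Re(\psi^n(t,z))\le\phi^n_\eta(t,\beta)$ for all $t\in[0,T]$.

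\emph{Explicit bound and uniformity.} Proposition~\ref{ineqAuxRic} with parameter $\eta$ and argument $\beta$ then gives, for $t\in(0,T]$,
$$ \Re(\psi^n(t,z))\le\phi^n_\eta(t,\beta)\le\frac{1-\sqrt{1+2\beta\nu^2 M_n(t)^2}}{\nu^2 M_n(t)},\qquad M_n(t):=\int_0^t E^n_\eta(s)\,ds>0, $$
with $2\beta=a-a^2+(1-\rho^2)b^2$ and $M_n(t)>0$ by Corollary~\ref{resolvent_positive}. Next I would note that, for every $c\ge0$, the map $m\mapsto\frac{1-\sqrt{1+cm^2}}{\nu^2 m}=\nu^{-2}\big(m^{-1}-\sqrt{m^{-2}+c}\big)$ is non-increasing on $(0,\infty)$, its derivative being $\nu^{-2}m^{-2}\big((1+cm^2)^{-1/2}-1\big)\le0$; so it suffices to bound $M_n(t)$ from below uniformly in $n$. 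Under Assumption~\ref{factorsAssump}, $K^n\to K$ in $\mathbb L^1([0,T],\R)$ by Proposition~\ref{convergenceK}, hence $E^n_\eta\to E_\eta$ in $\mathbb L^1([0,T],\R)$ by \cite[Theorem~2.3.1]{GLS90}; together with $\int_0^t E_\eta(s)\,ds>0$ (Corollary~\ref{resolvent_positive}) and $M_n(t)>0$, the convergent sequence $(M_n(t))_{n\ge1}$ of positive numbers has positive infimum $m(t)>0$. Monotonicity together with $M_n(t)\ge m(t)$ then delivers
$$ \sup_{n\ge1}\Re(\psi^n(t,z))\le\frac{1-\sqrt{1+\big(a-a^2+(1-\rho^2)b^2\big)\nu^2 m(t)^2}}{\nu^2 m(t)},\qquad t\in(0,T], $$
which is the claim.

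\emph{Main obstacle.} The genuinely delicate point is the comparison step: one must exploit the quadratic form of the nonlinearity so that the difference $\chi-\phi$ again satisfies a \emph{linear} Volterra equation — with a continuous, though a priori sign-indefinite, coefficient — to which the positivity principle of Theorem~\ref{T:positive_3} applies just as in the proof of Proposition~\ref{boundednessRiccati}. The remainder is the resolvent $\mathbb L^1$-convergence bookkeeping already used above (e.g.\ in the proofs of Theorem~\ref{stabilityTheorem} and Proposition~\ref{boundednessRiccati}) plus the one-line calculus fact on the monotonicity of $m\mapsto\frac{1-\sqrt{1+cm^2}}{\nu^2 m}$.
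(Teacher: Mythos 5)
Your proof is correct and follows the same strategy as the paper: compare $\Re(\psi^n(\cdot,z))$ with the auxiliary Riccati function $\phi^n_\eta(\cdot,\beta)$ via the positivity principle of Theorem~\ref{T:positive_3}, invoke the explicit bound of Proposition~\ref{ineqAuxRic}, and uniformize over $n$ via $\mathbb{L}^1$-convergence of the canonical resolvents together with the monotonicity of $m\mapsto\frac{1-\sqrt{1+cm^2}}{\nu^2 m}$. Note that your choice $\beta=r/2$ is the consistent one: the paper writes $\phi^n_\eta(\cdot,r)$ and consequently obtains $2r$ under the square root in its intermediate inequality, which does not match the $r$ appearing in the corollary's statement, so your bookkeeping in fact corrects a small factor-of-two slip in the paper's own argument.
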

\begin{proof} Let $r = a - a^2 + (1 - \rho^2) b^2$ and $ \eta = \rho \nu a - \lambda $. $\phi_{\eta}^n(\cdot,r) - \Re(\psi^n(\cdot,a+ib))$ being solution of the following linear Volterra equation with continuous coefficients
$$ \chi = K * \left(\frac{1}{2}\left(\rho b + \nu \Im(\psi^n(\cdot,a+ib))\right)^2 + \left(\rho \nu a - \lambda + \frac{\nu^2}{2} \left(\Re(\psi^n(\cdot,a+ib)) + \phi_\eta(\cdot,r)\right) \right) \chi \right), $$
we use Theorem \ref{T:positive_3} together with Proposition \ref{ineqAuxRic} to get, for all $t \in [0, T]$ and $b \in \R$,
\begin{equation} \label{ineqRePsi_n}
\Re(\psi^n(t,a+ib)) \leq \frac{1 - \sqrt{1+ 2 r \nu^2 (\int_0^t E_\eta^n(s) ds)^2}}{\nu^2 \int_0^t E_\eta^n(s) ds} .
\end{equation}
Moreover for any $t \in [0, T]$, $\int_0^t E_\eta^n(s) ds $ converges as $n$ goes to infinity to $\int_0^t E_\eta(s) ds $ because $K^n$ converges to $K$ in $\mathbb L^1([0,T], \R)$, see  \cite[Theorem 2.3.1]{GLS90}, where $E_\eta$ denotes the canonical resolvent of $K$ with parameter $\eta$. Therefore, $ m(t) = \inf_{n \geq 1} \int_0^t E_\eta^n(s) ds > 0 $, for all $t \in (0,T]$, because $\int_0^t E_\eta(s) ds > 0$ and  $\int_0^t E_\eta^n(s) ds > 0$ for all $n \geq 1$, see Corollary \ref{resolvent_positive}. Finally we end the proof by using \eqref{ineqRePsi_n} together with the fact that
$ x \mapsto \frac{1 - \sqrt{1+ 2 r \nu^2 x^2}}{\nu^2 x} $
is non-increasing on $(0, \infty)$.

\paragraph{End of the proof of Proposition \ref{callError} :} Assume that $|\rho| < 1$ and fix $a = 1/2$. By dominated convergence theorem, 
$$ \int_0^T \frac{1 - \sqrt{1+  (a - a^2 + (1 - \rho^2) b^2) \nu^2 m(T-s)^2}}{\nu^2 m(T-s)} (\theta(s) + V_0 \frac{s^{-H-\frac12}}{\Gamma(\frac{1}{2}-H)}) ds $$
is equivalent to
$$ - |b| \frac{\sqrt{1-\rho^2}}{\nu}\int_0^T (\theta(s) + V_0 \frac{s^{-H-\frac12}}{\Gamma(\frac{1}{2}-H)}) ds, $$
as $b$ tends to infinity. Hence, thanks to Corollary \ref{corolBoundRic}, there exists $C>0$ such that for any $b \in \R$
\begin{equation} \label{lastIneq}
\sup_{n \geq 1}~ \Re(\psi^n(t,a+ib)) \leq C(1-|b|) . 
\end{equation}
Recalling that
$$ \forall z_1, z_2 \in \C \text{ such that $\Re(z_1), \Re(z_2)\leq c$, } \quad |e^{z_1} - e^{z_2}| \leq e^c |z_1 - z_2|   , $$
we obtain 
$$ |L^n(a+ib, T) - L(a+ib, T)| \leq  e^{C(1-|b|)} \! \! \! \!   \sup_{t \in [0,T]} \!\! \!   |\psi^n(t,a+ib) - \psi(t,a+ib)| \int_0^T \! \! \!  (\theta(s) + V_0 \frac{s^{-H-\frac12}}{\Gamma(\frac{1}{2}-H)})  ds,  $$
from \eqref{formula1bis}, \eqref{formula2bis} and \eqref{lastIneq}.
We deduce Proposition \ref{callError} thanks to \eqref{lewis} and Theorem \ref{convergenceRiccati} together with the fact that $ \int_{b \in \R} \frac{b^4+1}{b^2 + \frac14} e^{C(1-|b|)} db < \infty. $
\end{proof}

\section*{Acknowledgments}

We thank Bruno Bouchard, Christa Cuchiero, Philipp Harms and Mathieu Rosenbaum for many interesting discussions. Omar El Euch is thankful for the support of the Research Initiative ``Mod\'elisation des march\'es actions, obligations et d\'eriv\'es'', financed by HSBC France, under the aegis of the Europlace Institute of Finance.

\section*{Appendix}

\appendix

\section{Stochastic convolutions and resolvents} \label{ConvolComputations}
We recall in this Appendix the framework and notations introduced in \cite{ALP17}.
\subsection{Convolution notation}\label{Convol} 
	
	For a measurable function $K$ on $\R_+$ and a measure $L$ on $\R_+$ of locally bounded variation, the convolutions $K*L$ and $L*K$ are defined by
	$$
	(K*L)(t) = \int_{[0,t]} K(t-s)L(ds), \qquad (L*K)(t) = \int_{[0,t]} L(ds)K(t-s)
	$$
	whenever these expressions are well-defined. If $F$ is a function on $\R_+$, we write $K*F=K*(Fdt)$, that is
	$$ (K*F)(t) = \int_0^t K(t-s) F(s) ds. $$
	We can show that $L * F$ is almost everywhere well-defined and belongs to $\mathbb L^p_{loc}(\R_+, \R)$, whenever $F \in \mathbb L^p_{\rm loc}(\R_+, \R)$. Moreover, $(F * G) * L = F * (G * L) $ {\em a.e.}, whenever $F, G \in \mathbb L^1_{loc}(\R_+, \R)$, see~\cite[Theorem~3.6.1 and Corollary~3.6.2]{GLS90} for further details. \\
	
	For any continuous semimartingale $M = \int _0^. b_s ds + \int_0^. a_s dB_s$  the convolution
	$$ (K*dM)_t = \int_0^t K(t-s)dM_s $$
	is well-defined as an It\^o integral for every $t\ge0$ such that 
	$$\int_0^t |K(t-s)| |b_s| ds + \int_0^t |K(t-s)|^2 |a_s|^2 ds <\infty.$$ 
	Using stochastic Fubini Theorem,  see \cite[Lemma~2.1]{ALP17}, we can show that for each $t \geq 0$, almost surely
	\begin{equation}\label{stochasticFubini} 
	(L*(K*dM))_t = ((L * K) * dM)_t,
	\end{equation}
	whenever $K \in \mathbb L^2_{loc}(\R_+,\R)$ and $a, b$ are locally bounded {\em a.s}. \\
	
	Finally from Lemma 2.4 in \cite{ALP17} together with the Kolmogorov continuity theorem, we can show that there exists a unique version of $(K*dM_t)_{t \geq 0}$ that is continuous whenever $b$ and $\sigma$ are locally bounded. In this paper, we will always work with this continuous version. \\
	
	Note that the convolution notation could be easily extended for matrix-valued $K$ and $L$. In this case, the associativity properties exposed above hold.

\subsection{Resolvent of the first kind} \label{resolvent1}
We define the {\em resolvent of the first kind} of a $d\times d$-matrix valued kernel $K$, as the $\R^{d\times d}$-valued measure $L$ on $\R_+$ of locally bounded variation such that
\begin{equation*}
K*L = L*K \equiv \id,
\end{equation*}
where $\id$ stands for the identity matrix, see~\cite[Definition~5.5.1]{GLS90}. The  resolvent of the first kind does not always exist. In the case of the fractional kernel $K(t) = \frac{t^{H-\frac12}}{\Gamma(H+1/2)}$ the resolvent of the first kind exists and is given by 
$$ L(dt) = \frac{t^{-H-\frac12}}{\Gamma(1/2-H)} dt,$$
for any $H \in (0,1/2)$. If $K$ is non-negative, non-increasing and not identically equal to zero on $\R_+$, the existence of a resolvent of the first kind is guaranteed by \cite[Theorem~5.5.5]{GLS90}. \\

The following result shown in~\cite[Lemma~2.6]{ALP17}, is stated here for $d = 1$ but is true for any dimension $d \geq 1$.  

\begin{lemma}\label{thelemma} Assume that $K \in \mathbb L^1_{\rm loc}(\R_+, \R)$ admits a resolvent of first kind $L$. For any $F \in \mathbb L^1_{loc}(\R_+, \R)$ such that $ F * L $ is right-continuous and of locally bounded variation one has 
	$$ F = (F*L)(0) K+ d(F*L) * K . $$
	Here, $df$ denotes the measure such that $f(t)=f(0)+\int_{[0,t]} df(s)$, for all $t\geq 0$, for any right-continuous function of locally bounde variation $f$ on $\R_+$.
\end{lemma}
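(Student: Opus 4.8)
The plan is to reduce the claimed identity, an equality between two elements of $\mathbb L^1_{loc}(\R_+,\R)$, to a triviality by convolving both sides with $L$, exploiting that convolution against $L$ is injective on $\mathbb L^1_{loc}$. As preparation, write $G := F * L$; by hypothesis $G$ is right-continuous and of locally bounded variation, so the measure $dG$ is well-defined through $G(t) = G(0) + \int_{[0,t]} dG(s)$ and, in particular, has no atom at $0$. Recall from Appendix~\ref{resolvent1} that $K * L = L * K = \id$, the kernel constantly equal to $1$; hence $\Phi * \id = \int_0^\cdot \Phi(s)\,ds$ for any $\Phi \in \mathbb L^1_{loc}(\R_+,\R)$, and $\mu * \id = \mu([0,\cdot])$ for any measure $\mu$ of locally bounded variation, the latter being $G - G(0)$ when $\mu = dG$.

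First I would establish the injectivity: if $\Phi \in \mathbb L^1_{loc}(\R_+,\R)$ satisfies $\Phi * L \equiv 0$, then by the associativity and (for $d=1$) commutativity of convolution, see \cite[Theorem~3.6.1 and Corollary~3.6.2]{GLS90}, one has $\int_0^\cdot \Phi(s)\,ds = \Phi * \id = \Phi * (L*K) = (\Phi*L)*K \equiv 0$, so $\Phi = 0$ a.e. Since $dG * K \in \mathbb L^1_{loc}(\R_+,\R)$, the function $\Phi := F - G(0)\,K - dG*K$ lies in $\mathbb L^1_{loc}(\R_+,\R)$, and it therefore suffices to check $\Phi * L \equiv 0$, that is, $F*L = \bigl(G(0)\,K + dG*K\bigr)*L$.

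Second, I would just expand the right-hand side: distributivity and associativity together with $K*L=\id$ give $\bigl(G(0)\,K + dG*K\bigr)*L = G(0)\,(K*L) + dG*(K*L) = G(0)\,\id + dG*\id$, and since $G(0)\,\id$ is the constant function $G(0)$ while $(dG*\id)(t) = dG([0,t]) = G(t)-G(0)$, the sum is $G = F*L$. This gives $\Phi*L\equiv 0$, hence $F = G(0)\,K + dG*K$ a.e., which is the assertion.

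The step I expect to require the most care is the bookkeeping, not any genuine analysis: one must check that every convolution written above is legitimate — all the objects are either $\mathbb L^1_{loc}$ kernels or measures of locally bounded variation, so the associativity and distributivity results of \cite[Section~3.6]{GLS90} apply, and $dG*K\in\mathbb L^1_{loc}$ so the injectivity step is applicable — and one must keep track of the atom at the origin: $\id$ is the constant-$1$ kernel (not a Dirac mass), and the normalisation $G(t)=G(0)+\int_{[0,t]}dG(s)$ leaves $dG$ without mass at $0$, so that the contribution of $G(0)$ is carried entirely by the term $G(0)\,K$. For matrix-valued $K$ the same scheme goes through once the order of the matrix convolutions is kept fixed and $L*K=\id$ is used in the cancellation step.
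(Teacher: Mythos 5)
Your proof is correct, and it is essentially the standard argument: the paper itself does not prove this lemma but simply cites \cite[Lemma~2.6]{ALP17}, whose proof rests on exactly the ingredients you use — the identity $K*L=L*K\equiv\id$, the decomposition $G=G(0)+dG*\id$, and associativity of the convolution of $\L^1_{\rm loc}$ kernels with measures of locally bounded variation. Your choice to phrase the cancellation as ``$(\text{RHS})*L=F*L$ plus injectivity of $\,\cdot\,*L$'' rather than ``$\int_0^\cdot F=(F*L)*K$ and differentiate'' is a cosmetic rearrangement of the same computation, and the bookkeeping you flag (legitimacy of each convolution, $d(F*L)$ having no mass at the origin under the paper's normalisation) is handled correctly.
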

\begin{remark} \label{DeltaK}
	The previous lemma will be used with $F = \Delta_h K$, for fixed $h > 0$. If $K$ is continuous on $(0, \infty)$, then $\Delta_h K*L$ is right-continuous. Moreover, if $K$ is  non-negative and  $L$ non-increasing  in the sense that  $s \to  L([s,s+t])$ is non-increasing for all $t\ge0$,  then $\Delta_h K *L$  is non-decreasing  since
	$$ (\Delta_h K * L)(t) = 1 -\int_{ [0,h)} K(h-s) L(t + ds),  \quad  t > 0. $$
	In particular, $\Delta_h K *L$  is of locally bounded variation. 
\end{remark}

\subsection{Resolvent of the second kind} \label{resolvent2}
We consider a kernel $K \in \mathbb L^1_{\rm loc}(\R_+, \R)$ and define {\em the resolvent of the second kind of $K$} as the unique function $R_K \in \mathbb L^1_{\rm loc}(\R_+, \R)$  such that
$$K - R_K =  K *  R_K. $$
For $\lambda \in \R $, we  define {\em the canonical resolvent of $K$  with parameter $\lambda$} as the unique solution $E_\lambda \in \mathbb L^1_{\rm loc}(\R_+, \R)$ of
$$ E_\lambda - K = \lambda K * E_\lambda. $$
This means that $E_\lambda =  - R_{-\lambda K} / \lambda$, when $\lambda \neq 0$ and $E_0=K$.
The existence and uniqueness of $R_K$ and $E_{\lambda}$ is ensured by \cite[Theorem 2.3.1]{GLS90} together with  the continuity of $K \rightarrow E_\lambda(K)$ in the topology of $\mathbb L^1_{\rm loc}(\R_+, \R)$. Moreover, if $K \in \mathbb L^2_{\rm loc}(\R_+, \R)$ so does $E_\lambda$ due to  \cite[Theorem 2.3.5]{GLS90}. \\

We recall \cite[Theorem 2.3.5]{GLS90} regarding the existence and uniqueness of a solution of linear Volterra integral equations in $\mathbb L^1_{\rm loc}(\R_+, \R)$.
\begin{theorem} \label{volterraLinear}Let $f \in \mathbb L^1_{\rm loc}(\R_+, \R)$. The integral equation
$$ x = f + \lambda K * x$$
admits a unique solution $x \in \mathbb L^1_{\rm loc}(\R_+, \R)$ given by
$$ x = f  + \lambda E_\lambda * f . $$
\end{theorem}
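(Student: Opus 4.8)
The plan is to take the existence and $\mathbb L^1_{\rm loc}(\R_+,\R)$-membership of the canonical resolvent $E_\lambda$ as granted (these are exactly the content of \cite[Theorem 2.3.1]{GLS90}, recalled just above the statement), so that the whole proof reduces to algebraic manipulations of convolutions in $\mathbb L^1_{\rm loc}$. These manipulations are legitimate because convolution of $\mathbb L^1_{\rm loc}$ functions is associative and commutative, as recalled in Appendix~\ref{Convol} (\cite[Theorem 3.6.1 and Corollary 3.6.2]{GLS90}), and because $K*F\in\mathbb L^1_{\rm loc}(\R_+,\R)$ whenever $K,F\in\mathbb L^1_{\rm loc}(\R_+,\R)$. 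There are then only two things to check: that $x:=f+\lambda E_\lambda * f$ solves the equation, and that the solution is unique.

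For existence I would substitute directly. Using the defining relation $\lambda K * E_\lambda = E_\lambda - K$ of the canonical resolvent together with distributivity and associativity,
$$ f + \lambda K * x = f + \lambda K * f + \lambda^2 (K * E_\lambda) * f = f + \lambda K * f + \lambda (E_\lambda - K) * f = f + \lambda E_\lambda * f = x, $$
so $x\in\mathbb L^1_{\rm loc}(\R_+,\R)$ is indeed a solution. For uniqueness, let $x_1,x_2$ be two solutions and set $y=x_1-x_2\in\mathbb L^1_{\rm loc}(\R_+,\R)$, so that $y=\lambda K * y$. If $\lambda=0$ this immediately gives $y=0$. If $\lambda\neq0$, divide by $\lambda$ to get $K * y=\lambda^{-1}y$, and then, using $E_\lambda = K + \lambda K * E_\lambda$ together with commutativity and associativity,
$$ E_\lambda * y = K * y + \lambda\, E_\lambda * (K * y) = \lambda^{-1} y + E_\lambda * y, $$
whence $\lambda^{-1}y=0$, i.e. $y=0$ almost everywhere on $\R_+$. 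This establishes the theorem.

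I do not expect a genuine obstacle here; the only points requiring a little care are the justification that every convolution written above is well defined and associative in $\mathbb L^1_{\rm loc}$ (covered by the cited results) and the separate, trivial treatment of $\lambda=0$. What \emph{would} be more involved is a self-contained argument that also re-derives the existence of $E_\lambda$: one would introduce the resolvent (Neumann) series $\sum_{n\ge1}\lambda^{n}K^{*n}$, and to obtain its convergence in $\mathbb L^1_{\rm loc}$ for \emph{every} $\lambda$ — not just when $|\lambda|\,\|K\|_{\mathbb L^1[0,T]}<1$ — one would split $[0,T]$ into finitely many subintervals on which $\int|K|$ is small (possible by absolute continuity of the integral) and solve the equation interval by interval as a fixed point of a contraction, then patch. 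Since $E_\lambda$ is already available to us, this extra work is unnecessary.
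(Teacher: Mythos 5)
Your proof is correct. The paper offers no proof of this statement: Theorem~\ref{volterraLinear} is recalled verbatim from \cite[Theorem~2.3.5]{GLS90}, with the existence of $E_\lambda$ in $\mathbb L^1_{\rm loc}(\R_+,\R)$ supplied by \cite[Theorem~2.3.1]{GLS90}. Your algebraic argument from the defining relation $E_\lambda - K = \lambda K * E_\lambda$ — direct substitution for existence, and the cancellation $E_\lambda * y = \lambda^{-1}y + E_\lambda * y$ for uniqueness — is the standard derivation and is consistent with the cited reference. The only steps needing care are the ones you already flag: commutativity of the scalar convolution (valid here since $d=1$, though it would fail for the matrix-valued kernels mentioned at the end of Appendix~\ref{resolvent2}) and the trivial but separate $\lambda=0$ case.
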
 

When $K$ and $\lambda$ are positive, $E_\lambda$ is also positive, see \cite[Proposition~9.8.1]{GLS90}.  In that case, we have a Gr\"onwall type inequality given by \cite[Lemma~9.8.2]{GLS90}.
\begin{lemma}\label{gronwall} Let $x, f \in \mathbb L^1_{\rm loc}(\R_+, \R)$ such that 
$$ x(t) \leq (\lambda K* x )(t) + f(t), \quad t \geq 0, \; a.e.$$
Then, 
$$ x(t) \leq f(t)  + (\lambda E_\lambda * f) (t), \quad t \geq 0,  \; a.e.$$
\end{lemma}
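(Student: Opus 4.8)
The plan is to reduce the stated inequality to the \emph{equality} case, i.e. to the linear Volterra equation already resolved in Theorem~\ref{volterraLinear}, and then to discard a non-negative remainder using the positivity of the canonical resolvent $E_\lambda$.

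First I would introduce the defect function
$$ r := \lambda K * x + f - x, $$
which by hypothesis is non-negative a.e. Since $K \in \mathbb L^1_{\rm loc}(\R_+, \R)$ and $x \in \mathbb L^1_{\rm loc}(\R_+, \R)$, the convolution $K*x$ lies in $\mathbb L^1_{\rm loc}(\R_+, \R)$, hence so does $r$. Rearranging the definition of $r$ gives
$$ x = (f - r) + \lambda K * x \quad \text{a.e.}, $$
so that $x$ is a solution in $\mathbb L^1_{\rm loc}(\R_+, \R)$ of the linear Volterra equation with free term $f - r \in \mathbb L^1_{\rm loc}(\R_+, \R)$. By the uniqueness statement of Theorem~\ref{volterraLinear}, this forces
$$ x = (f - r) + \lambda E_\lambda * (f - r) = \big( f + \lambda E_\lambda * f \big) - \big( r + \lambda E_\lambda * r \big) \quad \text{a.e.}, $$
where the second equality uses the bilinearity of the convolution.

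It then remains only to check that the subtracted term is non-negative a.e. Because $K$ and $\lambda$ are positive, the canonical resolvent $E_\lambda$ is non-negative by \cite[Proposition~9.8.1]{GLS90}; together with $r \geq 0$ a.e. and $\lambda \geq 0$, this gives $(E_\lambda * r)(t) = \int_0^t E_\lambda(t-s) r(s)\, ds \geq 0$ for a.e. $t$, whence $r + \lambda E_\lambda * r \geq 0$ a.e. Substituting into the displayed identity yields $x \leq f + \lambda E_\lambda * f$ a.e., as claimed.

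The argument is essentially bookkeeping, and the only genuinely substantive ingredient is the positivity of the resolvent $E_\lambda$, which is the classical fact \cite[Proposition~9.8.1]{GLS90} quoted just above the lemma; once that is available there is no real obstacle. The sole point demanding a little care is the tracking of the ``almost everywhere'' qualifiers — in particular confirming $r \in \mathbb L^1_{\rm loc}(\R_+, \R)$ so that Theorem~\ref{volterraLinear} genuinely applies to the free term $f-r$, and that convolution preserves a.e.\ non-negativity — but none of this is delicate.
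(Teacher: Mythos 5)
Your proof is correct, and it is the standard argument: introduce the non-negative defect $r := \lambda K*x + f - x$, invoke the uniqueness part of Theorem~\ref{volterraLinear} to write $x = (f-r) + \lambda E_\lambda*(f-r)$, and discard the non-negative term $r + \lambda E_\lambda*r$ using the positivity of the canonical resolvent. The paper itself supplies no proof for this lemma --- it simply cites it as \cite[Lemma~9.8.2]{GLS90} --- so there is no in-paper argument to compare against, but your derivation is exactly the one found in that reference. One minor remark: the hypotheses $K \geq 0$ and $\lambda > 0$ under which $E_\lambda \geq 0$ are not written into the lemma statement but are carried over from the sentence immediately preceding it in the paper; you correctly recognized and used them, though an explicit mention that these are standing assumptions would tighten the write-up.
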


Note that the definition of the resolvent of the second kind and canonical resolvent can be extended for matrix-valued kernels. In that case, Theorem \ref{volterraLinear} still holds.

\begin{remark} \label{resolventFrac} The canonical resolvent of the fractional kernel $K(t) = \frac{t^{H-\frac12}}{\Gamma(H+1/2)}$ with parameter $\lambda$ is given by
$$ t^{\alpha - 1} E_\alpha(-\lambda t^\alpha), $$
where $ E_\alpha(x) = \sum_{k \geq 0} \frac{x^k}{\Gamma(\alpha(k+1))} $ is the Mittag-Leffler function and $ \alpha = H + 1/2$ for $H \in (0,1/2)$. 
\end{remark}

	\section{Some existence results for stochastic Volterra equations}
	We collect in this Appendix existence results for general stochastic Volterra equations as introduced in \cite{ALP17}. We refer to \cite{papier2,ALP17} for the proofs.
We fix $T > 0$ and consider the $d$-dimensional stochastic Volterra equation
	\begin{align}\label{E:SVEmulti}
	X_t = g(t) + \int_0^t K(t-s) b(X_s) ds + \int_0^t K(t-s) \sigma(X_s) dB_s, \quad t \in[0, T], 
	\end{align}
		where $b: \mathbb R^d \mapsto \mathbb R^d$, $\sigma : \mathbb R^d \mapsto {\mathbb R}^{d \times m}$ are continuous functions with linear growth, $K \in \mathbb{L}^2 ([0, T], {\mathbb R}^{d \times d})$ is a kernel admitting a resolvent of the first kind $L$,  $g : [0, T] \mapsto \mathbb R^d$ is a continuous function and $B$ is a $m$-dimensional Brownian motion on a  filtered probability space $(\Omega, {\cal F}, \mathbb F, \mathbb P)$. 
	In order to prove the weak existence of continuous solutions to \eqref{E:SVEmulti}, the following regularity assumption is needed.
	\begin{assumption} \label{regularity} There exists $\gamma > 0$ and $C>0$ such that for any $t, h \geq 0 $ with $t + h \leq T$,
	$$ |g(t+h) - g(t)|^2 + \int_0^h |K(s)|^2 ds + \int_0^{T-h} |K(h+s) - K(s)|^2 ds  \leq C h^{2\gamma}.$$
	\end{assumption}

	The following existence result can be found in \cite[Theorem A.1]{papier2}.
	\begin{proposition}\label{weakExistence} Under Assumption \ref{regularity}, the stochastic Volterra equation \eqref{E:SVEmulti}  admits a weak continuous solution $X=(X_t)_{t \leq T}$. Moreover $X$ satisfies 
	\begin{equation} \label{sup_X}
	\sup_{t \in [0,T]} \E[|X_t|^p] < \infty, \quad p > 0, 
	\end{equation}
	and admits H\"older continuous paths on $[0, T]$ of any order strictly less than $\gamma$.
	\end{proposition}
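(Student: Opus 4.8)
The plan is to realize $X$ as a limit of strong solutions of Volterra equations with regularized coefficients, by a tightness/Skorokhod argument that is essentially the scheme behind the proof of Theorem~\ref{stabilityTheorem}, now run with a \emph{fixed} kernel $K$ while perturbing $(b,\sigma)$. First I would choose globally Lipschitz functions $(b_k,\sigma_k)$ converging to $(b,\sigma)$ locally uniformly with a common linear-growth constant; then \eqref{E:SVEmulti} with coefficients $(b_k,\sigma_k)$ and kernel $K$ has a unique strong continuous solution $X^k$, existence and uniqueness following from a Picard iteration whose contraction is controlled through the canonical resolvent of $|K|^2$ using only $K\in\mathbb L^2$, and path continuity from Assumption~\ref{regularity}. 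Next I would establish the uniform moment bound $\sup_{k\ge1}\sup_{t\le T}\E[|X^k_t|^p]<\infty$ for every $p\ge2$ by applying Jensen and the Burkholder--Davis--Gundy inequality to the drift and stochastic convolution, invoking the linear growth and then the Gr\"onwall-type inequality of Lemma~\ref{gronwall} with the canonical resolvent $E_c$ of $|K|^2$ for a suitable constant $c$; uniformity in $k$ comes from the common growth constant together with $\int_0^T E_c(s)\,ds<\infty$, exactly as in the proof of Theorem~\ref{stabilityTheorem}.

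Second, from these moment bounds, the linear growth, and Assumption~\ref{regularity}, I would derive $\E[|X^k_{t+h}-X^k_t|^p]\le C h^{p\gamma}$ with $C$ independent of $k$ for all $t,h\ge0$ with $t+h\le T$, by splitting the increment into the $|g(t+h)-g(t)|$ term, a term bounded by $\big(\int_0^h|K(s)|^2ds\big)^{p/2}$ and a term bounded by $\big(\int_0^{T-h}|K(h+s)-K(s)|^2ds\big)^{p/2}$ after Jensen/BDG. Taking $p$ with $p\gamma>1$, Kolmogorov's continuity criterion yields tightness of $(X^k)$ in $C([0,T],\R^d)$ for the uniform topology. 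The martingale parts $M^k:=\int_0^\cdot\sigma_k(X^k_s)\,dB_s$ have brackets $\int_0^\cdot\sigma_k\sigma_k^*(X^k_s)\,ds$, which are tight (non-decreasing with uniformly bounded moments), so $(M^k)$, and hence the triple $(X^k,M^k,B)$, is tight by \cite[Theorem~VI-4.13]{jacod2013limit}.

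Third, by Prokhorov and the Skorokhod representation theorem, along a subsequence there is a probability space carrying copies converging almost surely, uniformly on $[0,T]$, to some $(X,M,B)$; then $M$ is a continuous local martingale with $\langle M\rangle=\int_0^\cdot\sigma\sigma^*(X_s)\,ds$ by \cite[Theorem~VI-6.26]{jacod2013limit}. To identify the limiting equation I would convolve the equation for $X^k$, namely $X^k=g+K*dY^k$ with $Y^k=\int_0^\cdot b_k(X^k_s)\,ds+M^k$, with the resolvent of the first kind $L$ of $K$ and use the associativity \eqref{stochasticFubini} together with $L*K=\id$ to obtain $L*X^k=(L*g)+Y^k$; passing to the almost sure limit (dominated convergence for the drift, using the local uniform convergence of $b_k$ and the moment bounds; uniform convergence for the remaining terms) yields $L*X=(L*g)+\int_0^\cdot b(X_s)\,ds+M$. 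Since $\langle M\rangle=\int_0^\cdot\sigma\sigma^*(X_s)\,ds$, a representation theorem (\cite[Theorem~V-3.9]{revuz2013continuous}, on an enlargement of the space if needed) produces a Brownian motion $W$ with $M=\int_0^\cdot\sigma(X_s)\,dW_s$; convolving back with $K$ via \eqref{stochasticFubini} and differentiating recovers \eqref{E:SVEmulti}. Finally, \eqref{sup_X} follows from Fatou's lemma and the uniform bounds above, and the H\"older regularity of any order strictly less than $\gamma$ follows by passing $\E[|X_{t+h}-X_t|^p]\le Ch^{p\gamma}$ to the limit and letting $p\to\infty$ in Kolmogorov's criterion.

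\textbf{Main obstacle.} The delicate step is the passage to the limit in the singular stochastic convolution $\int_0^\cdot K(\cdot-s)\sigma_k(X^k_s)\,dB_s$: $K$ need not be a semimartingale kernel and $\sigma$ is merely continuous, so there is no direct way to pass to the limit inside the It\^o integral. Inverting the Volterra convolution with the resolvent of the first kind $L$ — which converts \eqref{E:SVEmulti} into the ordinary identity $L*X^k=(L*g)+Y^k$ — is the device that makes this limit tractable. The other subtlety is ensuring the Lipschitz approximations have genuinely continuous paths, so that $(K*\sigma_k(X^k)dB)$ is well defined pathwise, which again relies on Assumption~\ref{regularity}.
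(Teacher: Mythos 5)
The paper does not reprove Proposition~\ref{weakExistence}; it simply cites \cite[Theorem A.1]{papier2}. Your proposal reproduces what is in effect the standard argument from that reference, and it is structurally the same scheme the paper does spell out for Theorem~\ref{stabilityTheorem} — uniform $L^p$ bounds via BDG/Jensen and the Gr\"onwall inequality with the canonical resolvent of $|K|^2$, Kolmogorov tightness from Assumption~\ref{regularity}, Skorokhod representation, convolution with the resolvent of the first kind $L$ to turn the singular equation into the ordinary identity $L*X^k = L*g + Y^k$, identification of the martingale part via \cite[Theorem VI-6.26]{jacod2013limit} and \cite[Theorem V-3.9]{revuz2013continuous}, and finally reconvolution with $K$ — except that you perturb the coefficients $(b,\sigma)$ by Lipschitz approximants $(b_k,\sigma_k)$ while holding $K$ fixed, rather than perturbing $K$ with the coefficients held fixed. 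Two small points worth making explicit if you write this out: (i) when you apply Lemma~\ref{gronwall} for the $k$-uniform moment bound you need the a priori finiteness $\sup_{t\le T}\E[|X^k_t|^p]<\infty$, which you should observe comes free from the Picard/Lipschitz construction (so there is no circularity); (ii) the dominating random variable for the drift limit is $\sup_k\sup_{s\le T}|X^k_s|$, which is a.s.\ finite precisely because the Skorokhod convergence is uniform, not from the moment bounds alone.
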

	In particular, for the fractional kernel, Proposition \ref{weakExistence} yields the following result.
	\begin{corollary}\label{weakExistenceFrac} Fix $H \in (0, 1/2)$ and  $\theta : [0,T] \mapsto \R$  satisfying 
	$$  \forall \varepsilon > 0 , \quad \exists C_\varepsilon>0;  \quad \forall u \in (0,T] \quad  |\theta(u)| \leq C_\varepsilon u^{-\frac12 - \varepsilon}. $$
	The fractional stochastic integral equation 
	$$ X_t = X_0 +\frac{1}{\Gamma(H+1/2)} \int_0^t (t-u)^{H - \frac12} (\theta(u) + b(X_u))  du  + \frac{1}{\Gamma(H+1/2)} \int_0^t (t-u)^{H - \frac12}  \sigma(X_u)  dB_u , $$
	admits a  weak continuous solution $X=(X_t)_{t \le T}$ for any $X_0 \in \R$. Moreover $X$ satisfies \eqref{sup_X} and admits H\"older continuous paths on $[0, T]$ of any order strictly less than $H$.
	\end{corollary}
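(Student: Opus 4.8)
The plan is to recast the fractional stochastic integral equation in the framework of \eqref{E:SVEmulti} and then verify that Assumption \ref{regularity} holds, so that Proposition \ref{weakExistence} applies directly. I would take $K(t) = t^{H-1/2}/\Gamma(H+1/2)$, keep $b$ and $\sigma$ (continuous with linear growth, as assumed throughout this appendix), and set
\begin{equation*}
g(t) = X_0 + (K*\theta)(t) = X_0 + \int_0^t K(t-u)\theta(u)\,du ,
\end{equation*}
which absorbs the $\theta$-term into the free input; condition \eqref{cond1} with any $\varepsilon<1/2$ guarantees that this integral is finite and that $g$ is continuous on $[0,T]$. Moreover $K \in \mathbb L^2([0,T],\R)$ since $\int_0^T |K(s)|^2\,ds = T^{2H}/(2H\,\Gamma(H+1/2)^2) < \infty$ (because $H>0$), and $K$ admits a resolvent of the first kind, namely $L(dt) = \Gamma(1/2-H)^{-1}t^{-H-1/2}\,dt$, as recalled in Appendix \ref{resolvent1}. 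It then remains to establish the three-term inequality of Assumption \ref{regularity}.

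The two kernel terms are handled by classical Riemann--Liouville estimates: one has $\int_0^h |K(s)|^2\,ds = h^{2H}/(2H\,\Gamma(H+1/2)^2)$, and, splitting $\int_0^{T-h}|K(h+s)-K(s)|^2\,ds$ at $s=h$, the crude bound $|(h+s)^{H-1/2}-s^{H-1/2}|\le s^{H-1/2}$ on $[0,h]$ together with the mean value bound $|(h+s)^{H-1/2}-s^{H-1/2}|\le (1/2-H)\,h\,s^{H-3/2}$ on $[h,T-h]$ yields $\int_0^{T-h}|K(h+s)-K(s)|^2\,ds \le C h^{2H}$. Thus both kernel terms obey the required bound with exponent $\gamma=H$.

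The delicate point — and the only nontrivial one — is the modulus of continuity of $g=X_0+K*\theta$, where the singularity of $\theta$ at $0$ coming from \eqref{cond1} interacts with that of $K$. Fixing $\varepsilon\in(0,H)$ I would write
\begin{equation*}
g(t+h)-g(t) = \int_t^{t+h}K(t+h-u)\theta(u)\,du + \int_0^t\big(K(t+h-u)-K(t-u)\big)\theta(u)\,du .
\end{equation*}
Using $|\theta(u)|\le C_\varepsilon u^{-1/2-\varepsilon}$, the first integral is bounded, after the substitution $u=t+h-v$, by $C_\varepsilon\int_0^h v^{H-1/2}(h-v)^{-1/2-\varepsilon}\,dv$, which equals a Beta-function constant times $h^{H-\varepsilon}$; the second integral, after the substitution $w=t-u$ and a further split at $w=h$ (the crude bound for $w\le h$, the mean value bound for $w\ge h$), is likewise $\le C_\varepsilon h^{H-\varepsilon}$ via elementary Beta-type integrals. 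Hence $|g(t+h)-g(t)|\le C_\varepsilon h^{H-\varepsilon}$, so Assumption \ref{regularity} holds with $\gamma=H-\varepsilon$.

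Finally, applying Proposition \ref{weakExistence} with $\gamma=H-\varepsilon$ produces a weak continuous solution $X=(X_t)_{t\le T}$ of \eqref{E:SVEmulti}, equivalently of the fractional stochastic integral equation, satisfying \eqref{sup_X} and with H\"older paths of any order strictly less than $H-\varepsilon$. Since $\varepsilon\in(0,H)$ is arbitrary and the increment estimates above hold for every $\varepsilon$ for this same $X$ (the moment bound \eqref{sup_X} being independent of $\varepsilon$), a direct application of the Kolmogorov continuity criterion upgrades the path regularity to any order strictly less than $H$. The hard part will be precisely the $g$-increment estimate: the possible blow-up of $\theta$ like $u^{-1/2-\varepsilon}$ near $0$ forces the loss of the $\varepsilon$ relative to the exponent $H$ of the pure kernel terms, which is exactly why the conclusion states ``strictly less than $H$'' rather than $H$ itself; everything else is routine.
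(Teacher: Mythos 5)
Your proof is correct and follows essentially the same route as the paper: identify the equation as an instance of \eqref{E:SVEmulti} with $K$ the fractional kernel and $g = X_0 + K*\theta$, verify Assumption \ref{regularity}, and invoke Proposition \ref{weakExistence}. The paper's own proof is considerably more terse — it simply notes that $g$ is $(H-\varepsilon)$-Hölder continuous for every $\varepsilon>0$ without detailing the Beta-integral estimates for the $g$-increment or the two kernel terms, whereas you carry these out explicitly; this is a genuine elaboration, not a different argument.
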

	
	\begin{proof} It is enough to notice that the fractional stochastic integral equation is a particular case of \eqref{E:SVEmulti} with $d =m= 1$, $K(t) = \frac{t^{H-\frac12}}{\Gamma(H+1/2)}$ the fractional kernel, which admits a resolvent of the first kind, see Section \ref{resolvent1}, and 
	$$ g(t) = X_0 + \frac{1}{\Gamma(1/2+H)} \int_0^t (t-u)^{H - 1/2} \theta(u) du. $$
	As $t \mapsto t^{1/2+\varepsilon} \theta(t) $ is bounded on $[0,T]$, we may show that $g$ is $H - \varepsilon$ H\"older continuous for any $\varepsilon > 0$. Hence, Assumption \ref{regularity} is satisfied and the claimed result is directly obtained from Proposition \ref{weakExistence}.
	\end{proof}
	
	We now establish the strong existence and uniqueness of \eqref{E:SVEmulti} in the particular case of smooth kernels. This is done by extending the Yamada-Watanabe pathwise uniqueness proof in \cite{YW71}.
\begin{proposition}\label{T:Yamada} Fix $m=d=1$ and assume that $g$ is H\"older continuous, $K \in C^1([0,T], \R)$ admitting a resolvent of the first kind and that there exists $C>0$ and $\eta \in [1/2,1]$ such that for any $x,y \in \R$,
$$ |b(x) - b(y) | \leq C|x-y|, \quad |\sigma(x) - \sigma(y) | \leq C|x-y|^\eta. $$
Then, the stochastic Volterra equation~\eqref{E:SVEmulti} admits  a unique strong {continuous} solution.
	\end{proposition}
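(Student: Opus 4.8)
The plan is to follow the classical Yamada--Watanabe scheme for pathwise uniqueness, exploiting the smoothness of the kernel $K$ to control the memory term, and then to invoke the Yamada--Watanabe theorem (weak existence plus pathwise uniqueness implies strong existence and uniqueness). Weak existence of a continuous solution is already guaranteed by Proposition \ref{weakExistence}, since $g$ is H\"older continuous and $K \in C^1([0,T],\R)$ satisfies Assumption \ref{regularity}. So the heart of the matter is pathwise uniqueness: given two continuous solutions $X$ and $\tilde X$ on the same probability space driven by the same Brownian motion $B$, with the same $g$ and $K$, we must show $X = \tilde X$ a.s.

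The key device is to remove the convolution structure by differentiating. Since $K \in C^1$ admits a resolvent of the first kind $L$, I would first rewrite \eqref{E:SVEmulti} in a differentiated form. Writing $D_t := X_t - \tilde X_t$, one has $D_t = \int_0^t K(t-s)\big( (b(X_s) - b(\tilde X_s))ds + (\sigma(X_s) - \sigma(\tilde X_s))dB_s \big)$. Because $K \in C^1$, a standard integration-by-parts / stochastic Fubini argument (as in the convolution calculus of Appendix \ref{ConvolComputations}) lets one express $D_t$ through $K(0)$ times the "innovation" martingale-plus-drift process $Z_t := \int_0^t (b(X_s)-b(\tilde X_s))ds + \int_0^t (\sigma(X_s)-\sigma(\tilde X_s))dB_s$ plus a convolution with the bounded function $K'$: schematically $D_t = K(0) Z_t + \int_0^t K'(t-s) Z_s\, ds$. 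Now introduce the Yamada--Watanabe sequence of functions $\varphi_k$ (even, $C^2$, with $\varphi_k(x)\to |x|$, $0\le\varphi_k'\le 1$, and $x\varphi_k''(x) \le 2/(k\,(\text{something}))$ adapted to the exponent $\eta$, so that $|\sigma(x)-\sigma(y)|^2 \varphi_k''(x-y) \to 0$ as $k\to\infty$ — this is exactly where $\eta \geq 1/2$ is used). Apply It\^o's formula to $\varphi_k(Z_t)$: the second-order term is controlled by $\frac12\int_0^t |\sigma(X_s)-\sigma(\tilde X_s)|^2 \varphi_k''(Z_s)\,ds$, but here a subtlety arises because the argument of $\varphi_k$ is $Z_s$ while the diffusion increment involves $\sigma(X_s)-\sigma(\tilde X_s)$, and $Z_s$ is not equal to $D_s$. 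The remedy is the standard one: estimate $|\sigma(X_s)-\sigma(\tilde X_s)| \le C|D_s|^\eta$ and then bound $|D_s|$ in terms of $\sup_{u\le s}|Z_u|$ using the displayed relation $D = K(0)Z + K'* Z$, so that the whole analysis can be closed in terms of $u(t) := \E[\sup_{s\le t}|Z_s|]$ (or $\E[|Z_t|]$, using that $Z$ has a Lipschitz-in-time drift part and a martingale part).

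Taking expectations, sending $k \to \infty$, and using the Lipschitz bound on $b$ together with the H\"older bound on $\sigma$ converted via $|D_s| \le c\sup_{u\le s}|Z_u|$, one arrives at an inequality of Gr\"onwall type, $\E[|Z_t|] \le c\int_0^t \E[\sup_{u\le s}|Z_u|]\,ds$ (after also bounding $\E[\sup_{s\le t}|Z_s|]$ by $\E[|Z_t|]$ up to a BDG factor, or by working with a running-supremum Gr\"onwall lemma). This forces $Z \equiv 0$, hence $D \equiv 0$, i.e.\ $X = \tilde X$ a.s., which is pathwise uniqueness; combined with the weak existence from Proposition \ref{weakExistence} and the Yamada--Watanabe principle, we conclude strong existence and uniqueness. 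The main obstacle I anticipate is the bookkeeping in handling the mismatch between $Z$ (the natural variable for It\^o's formula) and $D$ (the natural variable for the H\"older estimate on $\sigma$): one must carefully pass the bound $|D_s| \le c\sup_{u \le s}|Z_u|$ through the Yamada--Watanabe estimate without losing the smallness of the second-order term, and verify that the regularity $K \in C^1$ (boundedness of $K$ and $K'$ on $[0,T]$, absence of a singularity at $0$ — precisely what fails for the fractional kernel) is exactly what makes this conversion legitimate.
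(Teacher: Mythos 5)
Your overall strategy is correct and matches the paper's: weak existence from Proposition~\ref{weakExistence}, pathwise uniqueness via the Yamada--Watanabe scheme, and the key use of $K\in C^1$ through the decomposition $D_t = K(0)Z_t + (K'*Z)_t$ (equivalently, $d(X^2_t-X^1_t) = K(0)\,dZ_t + (K'*dZ)_t\,dt$, so that $D=X^2-X^1$ is a genuine semimartingale). But the step where you apply It\^o's formula to $\varphi_k(Z_t)$ rather than to $\varphi_k(D_t)$ is a genuine gap, and the remedy you sketch does not close it.

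The second-order It\^o term for $\varphi_k(Z_t)$ is $\tfrac12\int_0^t\varphi_k''(Z_s)\,(\sigma(X^2_s)-\sigma(X^1_s))^2\,ds$. The Yamada--Watanabe functions are built so that $\varphi_k''(x)\,|x|^{2\eta}\le 2/k$; this makes the second-order term vanish precisely when the diffusion coefficient of the semimartingale fed into $\varphi_k$ is $\eta$-H\"older \emph{in that same semimartingale}. Here the estimate available is $|\sigma(X^2_s)-\sigma(X^1_s)|\le C|D_s|^\eta$, a power of $D_s$, not of $Z_s$. On the support of $\varphi_k''$ one has $|Z_s|\in(a_k,a_{k-1})$ and $\varphi_k''(Z_s)\le \tfrac{2}{k}|Z_s|^{-2\eta}$, so the integrand is bounded only by $\tfrac{2C^2}{k}(|D_s|/|Z_s|)^{2\eta}$. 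Your bound $|D_s|\le c\sup_{u\le s}|Z_u|$ leaves the ratio $\sup_{u\le s}|Z_u|/|Z_s|$, which is unbounded on the relevant event; alternatively, replacing $|Z_s|^{-2\eta}$ by $a_k^{-2\eta}$ gives $\tfrac{2C^2}{k\,a_k^{2\eta}}|D_s|^{2\eta}$ with $1/(k\,a_k^{2\eta})\to\infty$ (for $\eta=1/2$, $a_k=a_{k-1}e^{-k}$ decays super-exponentially). Taking expectations does not rescue this: the smallness of the second-order term is genuinely lost.

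The fix, and what the paper does, is to apply It\^o's formula directly to $\varphi_k(D_t)$. Since $K\in C^1$, the process $D_t=X^2_t-X^1_t$ is a continuous semimartingale whose local-martingale part is $K(0)\int_0^t(\sigma(X^2_s)-\sigma(X^1_s))\,dB_s$ (the $g$'s cancel, and both $K(0)(b(X^2)-b(X^1))\,dt$ and $(K'*d(Y^2-Y^1))_t\,dt$ are absolutely continuous). Hence $\langle D\rangle_t=K(0)^2\int_0^t(\sigma(X^2_s)-\sigma(X^1_s))^2\,ds$, and the second-order term becomes $\tfrac{K(0)^2}{2}\int_0^t\varphi_k''(D_s)(\sigma(X^2_s)-\sigma(X^1_s))^2\,ds\le K(0)^2C^2T/k$, because the argument of $\varphi_k''$ and the H\"older bound are now both in $D_s$. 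The two drift contributions ($K(0)\,d(Y^2-Y^1)$ and $(K'*d(Y^2-Y^1))\,dt$) are handled by the Lipschitz property of $b$ and the boundedness of $K'$ on $[0,T]$, yielding $\E[\varphi_k(D_t)]\le c\int_0^t\E[|D_s|]\,ds + c/k$; letting $k\to\infty$ and applying Gr\"onwall gives $D\equiv0$. The rest of your argument (weak existence plus pathwise uniqueness implies strong uniqueness) is as in the paper.
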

	
	\begin{proof} We start by noticing that, $K$ being smooth, it satisfies Assumption~\ref{regularity}. Hence, the existence of a weak continuous solution to~\eqref{E:SVEmulti} follows from Proposition~\ref{weakExistence}. It is therefore enough to show the pathwise uniqueness. We may proceed similarly to \cite{YW71} by considering $a_0 = 1$, $a_{k - 1} > a_{k }$ for $k \geq 1$ with $\int_{a_{k}}^{a_{k - 1}} x^{-2 \eta} dx = k $ and $\varphi_k \in C^2(\mathbb R, \mathbb R)$  such that $\varphi_k(x) = \varphi_k(-x)$, $\varphi_k(0) = 0$ and for $x > 0$
	\begin{itemize}
	\item $\varphi_k'(x) = 0$ for $x \leq a_k$, $\varphi_k'(x) = 1$ for $x \geq a_{k-1}$ and $\varphi_k'(x) \in [0, 1]$ for $ a_k<x<a_{k - 1}$.
	\item $\varphi_k''(x) \in [0, \frac{2}{k} x^{-2 \eta}]$  for $ a_k<x<a_{k - 1}$.
	\end{itemize}
	Let $X^1$ and $X^2$ be two solutions of \eqref{E:SVEmulti} driven by the same Brownian motion $B$. Notice that, thanks to the smoothness of $K$, $X^i - g$ are semimartingales and for $i = 1, 2$
		$$ d(X^i_t - g(t)) = K(0) dY^i_t +  {(K'*dY^i)_t} \; dt , $$
	with $Y^i_t = \int_0^t b(X^i_s)ds + \int_0^t \sigma(X^i_s)dB_s$.
	Using It\^o's formula, we write
	$$ \varphi_k(X^2_t - X^1_t) = I^1_t + I^2_t + I^3_t,  $$
	where 
	$$ I^1_t = K(0) \int_0^t  \varphi_k'(X^2_s - X^1_s) d(Y^1_s-Y^2_s), $$
	$$ I^2_t =  \int_0^t  \varphi_k'(X^2_s - X^1_s) (K' *d(Y^1-Y^2))_s ds, $$
	$$ I^3_t = \frac{K(0)^2}{2} \int_0^t  \varphi_k''(X^2_s - X^1_s) (\sigma(X^2_s) - \sigma(X^1_s))^2ds . $$
	Recalling that $\sup_{t \leq T} \mathbb E[(X^i_t)^2] < \infty $ for $i = 1, 2$ from Proposition \ref{weakExistence}, we obtain that 
	$$ \E[I^1_t] \leq \E[K(0) \int_0^t  |b(X^2_s) - b(X^1_s)| ds] \leq c \int_0^t \E[|X^2_s - X^1_s|] ds,   $$
	and
	$$ \E[I^2_t] \leq  {c} \int_0^t  \E[(|K'| * |b(X^2) - b(X^1)|)_s] ds \leq  c \int_0^t \E[|X^2_s - X^1_s|] ds,   $$
	because $b$ is Lipschitz continuous and $K'$ is bounded on $[0,T]$. Finally by definition of $\varphi_k$ and the $\eta$-H\"older continuity of $\sigma$, we have
	$$ \E[I^3_t] \leq \frac{c}{k} , $$
	which goes to zero when $k$ is large. Moreover $\E[\varphi_k(X^2_t - X^1_t)]$ converges to $\E[|X^2_t - X^1_t|]$ when $k$ tends to infinity, thanks to the monotone convergence theorem. Thus, we pass to the limit and obtain
	$$ \E[|X^2_t - X^1_t|] \leq c \int_0^t  \E[|X^2_s - X^1_s|] ds . $$
	Gr\"onwall's lemma leads to $  \E[|X^2_t - X^1_t|]  = 0$ {yielding the claimed pathwise uniqueness}.
	
	\end{proof}

	Under additional conditions on $g$ and $K$ one can obtain the existence of non-negative solutions to \eqref{E:SVEmulti} in the case of $d =m= 1$. As in \cite[Theorem 3.5]{ALP17}, the following assumption is needed.
		\begin{assumption} \label{positiveAssump} We assume that $K \in \mathbb L^2([0,T], \R)$ is non-negative, non-increasing and continuous on $(0, T]$. We also assume that its resolvent of the first kind $L$ is non-negative and non-increasing in the sense that $0\le L([s,s+t])\le L([0,t])$ for all $s,t\ge0$ with $s+t \leq T$.
	\end{assumption}
	In \cite{papier2}, the proof of \cite[Theorem 3.5]{ALP17} is adapted to prove the existence of a non-negative solution for a wide class of admissible input curves $g$ satisfying\footnote{Under Assumption \ref{positiveAssump} one can show  that $\Delta_hK * L$ is non-increasing {and right-continuous} thanks to Remark \ref{DeltaK} so that  the associated measure $d(\Delta_hK*L)$ is well-defined.}  
\begin{equation} \label{Croissance}
	\Delta_h g - (\Delta_hK * L)(0) g - d(\Delta_hK * L) * g \geq 0, \quad h \geq 0.
	\end{equation}
	We therefore define the following set of admissible input curves
	\begin{equation*}
	\Gc_K = \left\{g : [0,T] \mapsto \R  \mbox{ continuous satisfying } \eqref{Croissance} \mbox{ and } g(0) \geq 0  \right\}.
	\end{equation*}
	The following existence theorem is a particular case of \cite[Theorem A.2]{papier2}.
	\begin{theorem} \label{existencePositive}
		Assume that $d = m=1$ and that $b$ and $\sigma$ satisfy the boundary conditions
		$$ b(0)\ge0 , \quad  \sigma(0)=0.$$
		Then, under Assumptions \ref{regularity}, and \ref{positiveAssump},  the stochastic Volterra equation \eqref{E:SVEmulti} admits a non-negative weak solution for any $g \in \Gc_K$. 
	\end{theorem}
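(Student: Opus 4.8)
The plan is to first observe that an \emph{unconstrained} weak solution already exists, and then to upgrade it to a non-negative one by approximation, following the method of \cite[Theorem~3.5]{ALP17} (which is exactly how \cite[Theorem~A.2]{papier2} is proved). Since Assumption \ref{regularity} is part of the hypotheses, Proposition \ref{weakExistence} immediately gives a weak continuous solution of \eqref{E:SVEmulti} satisfying the moment bound \eqref{sup_X}; so the only issue is to produce one with values in $\R_+$.

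For this I would approximate. First note that every $g\in\Gc_K$ is non-negative (evaluate \eqref{Croissance} at $t=0$) and that $\Gc_K$ is a convex cone containing the constants, since $0\le\Delta_hK*L\le1$ by Remark \ref{DeltaK}; hence $g+\tfrac1n\in\Gc_K$ for all $n$. Extend $b,\sigma$ continuously to $\R$ with linear growth so that $\sigma\equiv0$ and $b\ge0$ on $(-\infty,0]$ — compatible with $\sigma(0)=0$, $b(0)\ge0$ — and pick Lipschitz $b_n\to b$, $\sigma_n\to\sigma$ locally uniformly, with linear growth constants uniform in $n$, still vanishing resp.\ non-negative on $(-\infty,0]$, and set $g_n:=g+\tfrac1n$. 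Proposition \ref{weakExistence} then yields a weak continuous solution $X^n$ of the corresponding equation, with \eqref{sup_X} uniform in $n$ (its constants depend only on the linear growth constants and on $\sup_{[0,T]}|g_n|$).

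The core step is showing $X^n\ge0$. Writing $X^n=g_n+K*dZ^n$ with $dZ^n_u=b_n(X^n_u)\,du+\sigma_n(X^n_u)\,dB_u$, I would apply Lemma \ref{thelemma} to $F=\Delta_hK$ (legitimate by Remark \ref{DeltaK}: $\Delta_hK*L$ is right-continuous, non-decreasing, $\le1$, hence of bounded variation with $d(\Delta_hK*L)\ge0$), convolve with $dZ^n$ and use the associativity \eqref{stochasticFubini} together with $K*dZ^n=X^n-g_n$, to get, for every stopping time $\tau\le T$ and $h\in[0,T-\tau]$,
$$X^n_{\tau+h}=D_n(\tau,h)+(\Delta_hK*L)(0)\,X^n_\tau+\big(d(\Delta_hK*L)*X^n\big)(\tau)+\int_\tau^{\tau+h}K(\tau+h-u)\,dZ^n_u,$$
where $D_n(\tau,h):=g_n(\tau+h)-(\Delta_hK*L)(0)g_n(\tau)-\big(d(\Delta_hK*L)*g_n\big)(\tau)\ge0$, which is precisely the admissibility condition \eqref{Croissance} for $g_n$. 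Taking $\tau:=\inf\{t\le T:X^n_t<0\}$, on $\{\tau\le T\}$ one has $X^n_\tau=0$ and $X^n\ge0$ on $[0,\tau]$ by continuity (using $g_n(0)>0$), so since $(\Delta_hK*L)(0)\ge0$ and $d(\Delta_hK*L)\ge0$ the first three terms are non-negative and $X^n_{\tau+h}$ is bounded below by the ``fresh'' stochastic convolution $\int_\tau^{\tau+h}K(\tau+h-u)\,dZ^n_u$, in which, by construction of $b_n,\sigma_n$, one has $|\sigma_n(X^n_u)|\le C(X^n_u)^+$ and $b_n(X^n_u)\ge-C(X^n_u)^+$. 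A Gr\"onwall/localisation estimate on the negative part (localising until $X^n$ exceeds a small $\delta>0$, using the $\mathbb L^2$ maximal inequality for stochastic convolutions, the resolvent Gr\"onwall inequality of Lemma \ref{gronwall}, and the uniform moment bound) then yields $X^n\ge0$.

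Finally I would pass to the limit: $(X^n)_{n\ge1}$ is tight for the uniform topology (Kolmogorov criterion, via the uniform moment bound and Assumption \ref{regularity}), and by the argument proving Theorem \ref{stabilityTheorem} — the only change being that $(b_n,\sigma_n)\to(b,\sigma)$ locally uniformly rather than being fixed, which is harmless by continuity and dominated convergence — any limit point $X$ is a weak solution of \eqref{E:SVEmulti} with input curve $\lim_n g_n=g$; being a uniform limit of the non-negative $X^n$ it is non-negative. The hard part is the completion of the previous step: with no semimartingale structure Tanaka's formula is unavailable, so the negative part of $X^n$ after its first passage below $0$ must be controlled purely through the convolution/resolvent calculus, and it is there that the admissibility class $\Gc_K$ and the monotonicity of $K$ and of its resolvent $L$ of the first kind genuinely enter; the approximation bookkeeping (keeping $g_n\in\Gc_K$, uniform bounds) is routine.
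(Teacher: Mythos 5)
First, a contextual point: the paper does not supply its own proof of Theorem~\ref{existencePositive}. It presents the statement as a special case of \cite[Theorem~A.2]{papier2}, which the paper says adapts \cite[Theorem~3.5]{ALP17}; so you are reconstructing an externally cited argument rather than one contained in this paper. Your three-step architecture — Lipschitz approximation of $(b,\sigma)$ with the boundary behaviour preserved, upward shift of $g$ by $1/n$, non-negativity of the approximants $X^n$ via the shift identity from Lemma~\ref{thelemma} at the first passage time $\tau$ to $0$, then passage to the limit via tightness and the stability machinery — is indeed the approach of the cited result, and your preliminary observations are correct: every $g\in\Gc_K$ is non-negative and $g+\tfrac1n\in\Gc_K$; by Remark~\ref{DeltaK} the measure $d(\Delta_hK*L)$ is non-negative, $(\Delta_hK*L)(0)\ge0$ and $D_n(\tau,h)\ge0$, so the ``historical'' terms dominate $X^n_{\tau+h}$ from below by the fresh convolution; the Kolmogorov criterion and the stability argument handle the limit. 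So the skeleton matches.

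The genuine gap is the one you yourself flag and then wave away: the sentence ``a Gr\"onwall/localisation estimate on the negative part then yields $X^n\ge0$'' is a claim, not a step. Tracing through the bounds you propose, the Lipschitz properties of the approximants give $b_n(x)\ge -C x^{+}$ and $|\sigma_n(x)|\le C x^{+}$, so the fresh convolution is controlled by $(X^n)^{+}$ rather than $(X^n)^{-}$; the negative part therefore does not close onto itself, and there is no inequality of the form $\E[((X^n_{\tau+h})^{-})^p]\le c\int_\tau^{\tau+h}K(\tau+h-u)\,\E[((X^n_u)^{-})^p]\,du$ to which Lemma~\ref{gronwall} could be applied. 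Moreover, $\int_\tau^{\cdot}K(\cdot-u)\sigma_n(X^n_u)\,dB_u$ is not a martingale in its upper limit (the integrand depends on the terminal time), so the ``$\mathbb L^2$ maximal inequality for stochastic convolutions'' you invoke is not available off the shelf; and your localisation ``until $X^n$ exceeds a small $\delta>0$'' is either backwards or at least too imprecise to tell what is being stopped. Since this positivity step is precisely where the admissibility class $\Gc_K$, the $1/n$ shift, and the monotonicity properties of $K$ and $L$ must interact to forbid the first passage below zero, and since everything else in the proof is routine once this is granted, the reconstruction as written is incomplete at its only nontrivial point.
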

	
	\begin{remark} \label{positiveFrac} Note that any locally square-integrable completely monotone kernel~\footnote{A kernel $K \in \mathbb L^2_{\rm loc}(\R_+, \R)$ is said to be completely monotone, if it is infinitely differentiable on $(0, \infty)$ such that $(-1)^j K^{(j)} (t) \geq 0$ for any $t >0 $ and $j \geq 0$.} that is not identically zero satisfies Assumption \ref{positiveAssump}, see \cite[Example 3.6]{ALP17}. In particular, this is the case for
	\begin{itemize}
	\item the fractional kernel $K(t) = \frac{t^{H-1/2}}{\Gamma(H+1/2)}$, with $H \in (0,1/2)$.
	\item any weighted sum  of exponentials $K(t) = \sum_{i=1}^n c_i e^{-\gamma_i t}$ such that $c_i,\gamma_i \geq 0$ for all $i \in \{1,\dots, n\}$ and $c_i>0$ for some $i$.
	\end{itemize}
	\end{remark}
	
	\begin{remark} \label{postiiveG_rem}
	Theorem \ref{existencePositive} will be used with functions $g$ of the following form
	$$ g(t) =  c + \int_0^t K(t-s) \xi(ds), $$
	where $\xi$ is a non-negative measure of locally bounded variation and $c$ is a non-negative constant. In that case, we may show that \eqref{Croissance} is satisfied, under Assumption \ref{positiveAssump}.
	\end{remark}

\section{Linear Volterra equation with continuous coefficients}\label{AppC}
In this section, we consider $K\in \mathbb L^2_{\rm loc}(\R_+, \R)$ satisfying Assumption \ref{positiveAssump} with $T = \infty$ and recall the definition of $\Gc_K$, that is
\begin{equation*}
	\Gc_K = \left\{g : \R_+ \mapsto \R  \mbox{ continuous satisfying } \eqref{Croissance} \mbox{ and } g(0) \geq 0  \right\} .
	\end{equation*}
We denote by $\|.\|_{\infty, T}$ the usual uniform norm on $[0,T]$, for each $T>0$.

\begin{theorem} \label{T:positive_3}
Let $K \in \mathbb L^2_{\rm loc}(\R_+, \R)$ satisfying Assumption \ref{positiveAssump} and $g, z, w : \R_+ \mapsto \R $ be continuous functions. The linear Volterra equation 
\begin{equation}\label{E:chi}
\chi =  g+ K*\left( z\chi  + w\right)
\end{equation}
admits a unique continuous solution $\chi$. Furthermore if $g \in \Gc_K$ and $w$ is non-negative, then  $\chi$ is non-negative and
$$ \Delta_{t_0} \chi = g_{t_0} + K * (\Delta_{t_0}z  \Delta_{t_0}\chi + \Delta_{t_0} w) $$
with $g_{t_0}(t) = \Delta_{t_0} g(t) + (\Delta_t K * (z\chi + w))(t_0) \in \Gc_K$, for all  for $t_0,t \geq  0$.

\end{theorem}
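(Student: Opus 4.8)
The plan is to prove the three assertions in \eqref{E:chi}'s statement — existence and uniqueness of $\chi$, then the shift identity, then the non-negativity of $\chi$ together with $g_{t_0}\in\Gc_K$ — in that order, reducing the last (the only nontrivial point) to the machinery already available in the appendix.

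\textbf{Existence and uniqueness.} Fix $T>0$. Since $z$ is continuous it is bounded on $[0,T]$, say $|z|\le M$, and $K\in\mathbb L^2([0,T],\R)\subset\mathbb L^1([0,T],\R)$, so the affine map $\Phi:\chi\mapsto g+K*w+K*(z\chi)$ sends $C([0,T],\R)$ into itself. I would show by induction, using Fubini's theorem and associativity of convolution, that
\[
\big|\Phi^n(\chi_1)(t)-\Phi^n(\chi_2)(t)\big|\;\le\;M^n\big(|K|^{*n}*|\chi_1-\chi_2|\big)(t)\;\le\;M^n\,\big\|\,|K|^{*n}\,\big\|_{\mathbb L^1([0,T])}\,\|\chi_1-\chi_2\|_{\infty,T},
\]
and then conclude from $\big\|\,|K|^{*n}\,\big\|_{\mathbb L^1([0,T])}^{1/n}\to 0$ (the Volterra convolution algebra on $[0,T]$ being radical, cf.\ the construction of second-kind resolvents in \cite[Theorem~2.3.1]{GLS90}) that $\Phi^n$ is a contraction for $n$ large; its unique fixed point is the unique continuous solution of \eqref{E:chi} on $[0,T]$, and letting $T$ vary gives the statement on $\R_+$. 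Equivalently, \eqref{E:chi} is a linear Volterra equation with the $\mathbb L^1$-type non-convolution kernel $(t,s)\mapsto K(t-s)z(s)$, to which the classical resolvent theory of \cite{GLS90} applies directly.

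\textbf{The shift identity.} Fix $t_0\ge0$ and set $\phi=z\chi+w$. Splitting the convolution in \eqref{E:chi} at $t_0$,
\[
\chi(t_0+t)=\Delta_{t_0}g(t)+\int_0^{t_0}K(t+t_0-s)\phi(s)\,ds+\int_0^{t}K(t-r)\phi(t_0+r)\,dr,
\]
I would recognise the middle integral as $(\Delta_t K*\phi)(t_0)$ and the last as $\big(K*(\Delta_{t_0}z\,\Delta_{t_0}\chi+\Delta_{t_0}w)\big)(t)$, which is exactly the announced equation for $\Delta_{t_0}\chi$ with $g_{t_0}(t)=\Delta_{t_0}g(t)+(\Delta_t K*\phi)(t_0)$. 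Evaluating at $t=0$ gives $g_{t_0}(0)=g(t_0)+(K*\phi)(t_0)=\chi(t_0)$, so $g_{t_0}(0)\ge0$ will be a consequence of the non-negativity of $\chi$.

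\textbf{Non-negativity and $g_{t_0}\in\Gc_K$.} Since $K\ge0$, $w\ge0$ and $\Gc_K$ is stable under adding $K*(\text{non-negative measure})$ (Remark~\ref{postiiveG_rem} and linearity of \eqref{Croissance}), one has $g+K*w\in\Gc_K$, so it suffices to treat $w\equiv0$, i.e.\ $\chi=g+K*(z\chi)$ with $g\in\Gc_K$. This is the deterministic instance ($\sigma\equiv0$) of the stochastic Volterra equation \eqref{E:SVEmulti} with affine drift $b(t,x)=z(t)x$ satisfying $b(t,0)=0\ge0$; I would obtain $\chi\ge0$ by re-running the proof of Theorem~\ref{existencePositive}, i.e.\ of \cite[Theorem~A.2]{papier2}, which uses only Assumption~\ref{positiveAssump}, the boundary condition $b(t,0)\ge0$ and $g\in\Gc_K$ and goes through when the drift depends continuously on time — or, alternatively, by embedding \eqref{E:chi} into the autonomous two-dimensional system with state $(\chi(t),t)$, block-diagonal kernel with blocks $K$ and the constant kernel $1$, and drift $(x_1,x_2)\mapsto(z(x_2)x_1,\,1)$, and applying the multidimensional version. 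Non-negativity then gives $g_{t_0}(0)=\chi(t_0)\ge0$, and the growth condition \eqref{Croissance} for $g_{t_0}$ comes out of the same argument: in the time-discretised scheme underlying \cite[Theorem~A.2]{papier2} the residual input curve after any node stays in $\Gc_K$, and passing to the limit identifies that curve with $g_{t_0}$; here one uses Lemma~\ref{thelemma} (so that the operator $f\mapsto\Delta_h f-(\Delta_h K*L)(0)f-d(\Delta_h K*L)*f$ annihilates $K$ and sends $K*(\text{non-negative measure})$ to non-negative functions) together with Assumption~\ref{positiveAssump}.

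The main obstacle is precisely this last step. Existence and uniqueness is a soft fixed-point argument and the shift identity is algebra, but the non-negativity of $\chi$ and the stability $g_{t_0}\in\Gc_K$ genuinely rely on the structural Assumption~\ref{positiveAssump} and the definition of $\Gc_K$; the memory in the Volterra convolution rules out a naive first-passage comparison, so one really must reuse — now in the linear, time-inhomogeneous setting — the invariance argument behind Theorem~\ref{existencePositive}.
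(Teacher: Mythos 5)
Your overall skeleton is the same as the paper's (existence/uniqueness is soft, the shift identity is algebra, the hard step is non-negativity plus $g_{t_0}\in\Gc_K$), and your reduction to $w\equiv0$ via linearity of \eqref{Croissance} and Remark~\ref{postiiveG_rem} is sound. But the core of the theorem — the non-negativity of $\chi$ — is left as a gap, and the route you sketch to fill it is not actually available.

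You propose either (i) re-running the proof of Theorem~\ref{existencePositive} (i.e.\ \cite[Theorem~A.2]{papier2}) with the time-dependent drift $b(t,x)=z(t)x$, or (ii) embedding into a $2$-dimensional autonomous system with block-diagonal kernel $\mathrm{diag}(K,1)$ and state $(\chi(t),t)$, and invoking a ``multidimensional version.'' Neither is carried out, and both are substantive claims: Theorem~\ref{existencePositive} is stated and proved only for autonomous scalar $b$, and there is no multidimensional non-negativity theorem in the cited sources to invoke — for matrix-valued kernels the set $\Gc_K$ is not defined and the componentwise-invariance structure is not established, so the $2$D embedding is not a reduction to anything known. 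You also write that ``the memory in the Volterra convolution rules out a naive first-passage comparison,'' and then abandon the first-passage idea; this is a misdiagnosis. The paper \emph{does} run a first-passage argument, made robust by an $\varepsilon$-perturbation: it introduces $\chi_\varepsilon$ solving $\chi_\varepsilon = g + K*(z\chi_\varepsilon + w + \varepsilon)$, shows $\chi_\varepsilon>0$ near $0$ by a local expansion, and at the first vanishing time $t_0$ uses exactly your shift identity together with Lemma~\ref{thelemma} and Remark~\ref{DeltaK} to show that $g_{t_0,\varepsilon}\in\Gc_K$, so $\chi_\varepsilon$ is again non-negative near $t_0$, a contradiction. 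Letting $\varepsilon\downarrow0$ via Gr\"onwall (Lemma~\ref{gronwall}) gives $\chi\ge0$. That $\varepsilon$-perturbation is precisely the ingredient that makes the first-passage comparison work despite the memory, and it is the ingredient your argument is missing. The remaining claim $g_{t_0}\in\Gc_K$ for the unperturbed solution then follows (as the paper says, by the computation of \cite[Theorem~3.1]{papier2} with $\nu=0$) once $\chi\ge0$ is known.

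A secondary, minor divergence: for existence/uniqueness you use a Picard/radical-algebra contraction argument, whereas the paper cites \cite[Lemma C.1]{ALP17} for $\mathbb L^1_{\rm loc}$ existence/uniqueness and then upgrades to continuity by a Gr\"onwall bound. Both are fine; this part of your proposal is a legitimate alternative.
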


\begin{proof} The existence and uniqueness of such solution in $\chi \in \mathbb L^1_{\rm loc}(\R_+, \R)$ is obtained from \cite[Lemma C.1]{ALP17}. Because $\chi$ is solution of \eqref{E:chi},  it is enough to show the local boundedness of $\chi$ to get its continuity.  This follows from Gr\"onwall's Lemma \ref{gronwall} applied on the following inequality
$$ |\chi(t)| \leq \|g\|_{\infty, T} + \left(K*( \|z\|_{\infty, T} |\chi|(.) + \|w\|_{\infty, T})\right)(t), $$
for any $t \in [0,T]$ and for a fixed $T>0$. \\

We assume now that $g \in \Gc_K$ and $w$ is non-negative. The fact that $g_{t_0} \in \Gc_K$, for $t_0 \geq 0$, is proved by adapting the computations of the proof of \cite[Theorem 3.1]{papier2} with $\nu = 0$ provided that $\chi$ is non-negative. In order to establish  the non-negativity of  $\chi$, we introduce, for each $\varepsilon > 0$,  $\chi_{\varepsilon}$ as the unique continuous solution of
\begin{equation}\label{E:chieps}
\chi_{\varepsilon} =  g+ K*\left( z\chi_{\varepsilon} + w +  \varepsilon \right).
\end{equation}
It is enough to prove that $\chi_{\varepsilon}$ is non-negative, for every $\varepsilon >0$, and that $(\chi_{\varepsilon})_{\varepsilon >0}$ converges uniformly on every compact to $\chi$ as $\varepsilon$ goes to zero.

\paragraph{Positivity of $\chi_\varepsilon$ :}It is easy to see that $\chi_{\varepsilon}$ is non-negative on a neighborhood of zero because, for small $t$,
$$ \chi_{\varepsilon}(t) =  g(t) + \left( z(0) g(0) + w(0) + \varepsilon \right)  \int_{0}^{t} K(s)ds  + o(\int_{0}^{t} K(s)ds) ,$$
as $\chi, z$ and $w$ are continuous functions. Hence, $t_0 = \inf \{ t > 0;\quad \chi_\varepsilon(t)<0 \}$ is positive. If we assume that $t_0 < \infty$, we get $\chi_{\varepsilon}(t_0) = 0$ by continuity of $\chi_{\varepsilon}$. $\chi_\varepsilon$ being the solution of \eqref{E:chieps}, we have
$$ \Delta_{t_0} \chi_{\varepsilon} = g_{t_0, \varepsilon} + K * (\Delta_{t_0}z \Delta_{t_0}\chi_{\varepsilon} + \Delta_{t_0}w +  \varepsilon),$$
with $g_{t_0, \varepsilon}(t)  = \Delta_{t_0} g(t) + (\Delta_t K * (z\chi_{\varepsilon} + w + \varepsilon))(t_0) $. Then, by using Lemma \ref{thelemma} with $F = \Delta_tK$, we obtain
\begin{align*} 
g_{t_0, \varepsilon}(t) &= \Delta_{t_0} g(t) - (d(\Delta_t K * L) * g)(t_0) - (\Delta_t K * L) (0) g(t_0) \\
&+ (d(\Delta_t K * L) * \chi_{\varepsilon})(t_0) + (\Delta_t K * L) (0) \chi_{\varepsilon}(t_0) , \end{align*}
which is continuous and non-negative, because $g \in \Gc_K$ and $\Delta_tK*L$ is non-decreasing for any $t\geq 0$, see Remark \ref{DeltaK}. Hence, in the same way, $\Delta_{t_0} \chi_{\varepsilon}$ is non-negative on a neighborhood of zero. Thus $t_0 = \infty$, which means that $\chi_\varepsilon$ is non-negative.

\paragraph{Uniform convergence of $\chi_\varepsilon$ :} We use the following inequality
$$ |\chi-\chi_\varepsilon|(t) \leq \left(K * (\| z \|_{\infty, T}  |\chi-\chi_\varepsilon|  + \varepsilon) \right) (t), \quad t \in [0, T],$$
together with the Gronwall Lemma \ref{gronwall} to show the uniform convergence on $[0, T]$ of $\chi_{\varepsilon}$ to $\chi$ as $\varepsilon$ goes to zero. In particular, $\chi$ is also non-negative. 

\end{proof}

\begin{corollary} \label{resolvent_positive}  Let $K\in \mathbb L^2_{\rm loc}(\R_+, \R)$ satisfying Assumption \ref{positiveAssump} and define $E_\lambda$ as the canonical resolvent of $K$ with parameter $\lambda  \in \R-\{0\}$. Then, $t  \mapsto \int_0^t E_\lambda(s) ds$ is non-negative and non-decreasing on $\R_+$. Furthermore  $\int_0^t E_\lambda(s) ds$ is positive, if $K$ does not vanish on $[0,t]$\end{corollary}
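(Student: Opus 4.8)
The plan is to transfer the whole statement to the integrated resolvent $G_\lambda := \mathbf{1}*E_\lambda = \int_0^{\cdot}E_\lambda(s)\,ds$ and to feed it into Theorem~\ref{T:positive_3}. First I would note that $E_\lambda\in\mathbb{L}^2_{\mathrm{loc}}(\R_+,\R)$ because $K$ is (see \cite[Theorem~2.3.5]{GLS90}), so $G_\lambda$ is continuous. Convolving the defining relation $E_\lambda = K + \lambda K*E_\lambda$ with $\mathbf{1}$ and using associativity and commutativity of the convolution gives
\begin{equation*}
G_\lambda = \mathbf{1}*K + \lambda\,K*G_\lambda,
\end{equation*}
so that $G_\lambda$ is the unique continuous solution of the linear Volterra equation \eqref{E:chi} with $g = \mathbf{1}*K$, $z\equiv\lambda$, $w\equiv 0$. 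By Remark~\ref{postiiveG_rem} (applied with additive constant $0$ and with $\xi$ the Lebesgue measure, which is non-negative and of locally bounded variation), $g=\mathbf{1}*K\in\Gc_K$; Theorem~\ref{T:positive_3} then yields $G_\lambda\ge 0$ on $\R_+$, which is the non-negativity assertion.

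Next I would prove monotonicity. Fix $h\ge 0$ and put $\phi := \Delta_h G_\lambda - G_\lambda$. Using the shift identity $\Delta_h(K*G_\lambda)(t) = (\Delta_tK*G_\lambda)(h) + (K*\Delta_hG_\lambda)(t)$ and subtracting the equations for $\Delta_hG_\lambda$ and $G_\lambda$, one checks that $\phi$ solves \eqref{E:chi} with the same $z\equiv\lambda$, $w\equiv 0$ and with input
\begin{equation*}
g_h(t) = \bigl(\Delta_hg(t)-g(t)\bigr) + \lambda\,(\Delta_tK*G_\lambda)(h).
\end{equation*}
This $g_h$ is precisely the shifted input furnished by the last assertion of Theorem~\ref{T:positive_3} minus $g$, and $g_h(0)=g(h)+\lambda(K*G_\lambda)(h)=G_\lambda(h)\ge 0$ by the first step. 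The key point is then to establish $g_h\in\Gc_K$, i.e.\ that $g_h$ satisfies \eqref{Croissance}; I would do this by rewriting $\Delta_hg-g$ and $t\mapsto(\Delta_tK*G_\lambda)(h)$ through Lemma~\ref{thelemma} (applied with $F=\Delta_hK$ and with $F=\Delta_tK$), using that, under Assumption~\ref{positiveAssump}, $\Delta_hK*L$ is non-negative, right-continuous and non-decreasing (Remark~\ref{DeltaK}), together with $G_\lambda\ge0$. These are the same manipulations as those proving the shift part of Theorem~\ref{T:positive_3} (cf.\ \cite[Theorem~3.1]{papier2}, carried out there with a quadratic term, here with a linear one). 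Once $g_h\in\Gc_K$, Theorem~\ref{T:positive_3} (again with $w\equiv0\ge0$) gives $\phi\ge 0$, i.e.\ $G_\lambda(t+h)\ge G_\lambda(t)$ for all $t,h\ge0$; equivalently $E_\lambda\ge0$ a.e.

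For strict positivity, assume $K$ is not identically zero on $[0,t]$, so $(\mathbf{1}*K)(t)=\int_0^tK(u)\,du>0$. Since $G_\lambda$ is non-decreasing, $(K*G_\lambda)(t)=\int_0^tK(t-u)G_\lambda(u)\,du\le G_\lambda(t)\,(\mathbf{1}*K)(t)$; inserting this into $G_\lambda(t)=(\mathbf{1}*K)(t)+\lambda(K*G_\lambda)(t)$ gives $G_\lambda(t)\bigl(1+|\lambda|(\mathbf{1}*K)(t)\bigr)\ge(\mathbf{1}*K)(t)$, hence $G_\lambda(t)>0$. The only genuine difficulty in this programme is the verification that the shifted input $g_h$ remains in the admissibility cone $\Gc_K$: this is exactly where the full content of Assumption~\ref{positiveAssump} — non-negativity and monotonicity of the first-kind resolvent $L$ — is used, via Lemma~\ref{thelemma} and Remark~\ref{DeltaK}. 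The reduction in the first step and the elementary estimate in the last step are routine.
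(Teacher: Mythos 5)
Your proof follows essentially the same path as the paper's: feed the linear Volterra equation for $\int_0^\cdot E_\lambda$ into Theorem~\ref{T:positive_3} to get non-negativity, then use the shift-invariance part of that theorem to deduce monotonicity. The one place you diverge is a cosmetic but consequential choice of decomposition: you absorb the constant forcing term into the input, writing $G_\lambda=g+K*(\lambda G_\lambda)$ with $g=\mathbf{1}*K\in\Gc_K$ and $w\equiv 0$, whereas the paper writes $\chi=K*(\lambda\chi+1)$, i.e.\ takes $g\equiv 0$ and $w\equiv 1$. With the paper's choice, the shift-invariance clause of Theorem~\ref{T:positive_3} directly hands you $g_{t_0}=(\Delta_\cdot K*(\lambda\chi+1))(t_0)\in\Gc_K$, and since $g\equiv 0$ the difference $\Delta_{t_0}\chi-\chi$ again has an admissible input; you are then done in two lines. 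With your choice, the input for $\Delta_{t_0}G_\lambda-G_\lambda$ is $g_{t_0}-g$, and $\Gc_K$ is a cone, not a subspace, so membership does not follow by subtraction; you correctly flag this and propose to re-run the Lemma~\ref{thelemma}/Remark~\ref{DeltaK} manipulations to verify it directly. That would work (in fact, an algebraic simplification shows $g_{t_0}-\mathbf{1}*K=(\Delta_\cdot K*(1+\lambda G_\lambda))(t_0)$, which is exactly the paper's $g_{t_0}$), but it is extra work that the paper avoids by the cleaner splitting. Finally, your last paragraph supplies a short, elementary derivation of the strict-positivity claim, which the paper's proof leaves implicit — a genuine improvement in completeness.
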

\begin{proof} The non-negativity of $\chi = \int_0^\cdot E_\lambda(s) ds$ is obtained from Theorem \ref{T:positive_3} and from the fact that $\chi$ is solution of the following linear Volterra equation
$$\chi = K * (\lambda \chi + 1) ,$$
by Theorem \ref{volterraLinear}. For fixed $t_0>0$, $\Delta_{t_0} \chi$ satisfies
$$ \Delta_{t_0}\chi = g_{t_0} + K * (\lambda \Delta_{t_0}\chi + 1), $$
with $g_{t_0}(t) = \big( \Delta_t K * (\lambda \Delta_{t_0}\chi + 1)\big) (t_0) \in \Gc_K$, see Theorem \ref{T:positive_3}. It follows that $\Delta_{t_0} \chi - \chi$ solves
$$ x = g_{t_0} + K * (\lambda x).$$
Hence, another application of Theorem \ref{T:positive_3} yields that $\chi \leq \Delta_{t_0} \chi $, proving that $t \to \int_0^t E_\lambda(s)ds$ is non-decreasing.
\end{proof}

We  now provide a version of Theorem \ref{T:positive_3} for complex valued solutions.

\begin{theorem} \label{positiveC} Let $z, w : \mathbb R_+ \mapsto \mathbb C $ be continuous functions and $h_0 \in \mathbb C$. The following linear Volterra equation
\begin{equation*}
h = h_0 + K * (z h + w) 
\end{equation*}
admits unique continuous solution $h : \R_+ \mapsto \mathbb C$ such that  
$$ |h(t)| \leq \psi(t) , \quad t \geq 0,$$
where $\psi : \R_+ \mapsto \R $ is the unique continuous solution of 
$$
 \psi = |h_0| + K * (\Re(z) \psi + |w|). 
$$
\end{theorem}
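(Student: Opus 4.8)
The plan is to view the complex equation as a two–dimensional real Volterra system and to obtain the modulus bound by a regularisation–plus–first–crossing argument in the spirit of the proof of Theorem~\ref{T:positive_3}.

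\textbf{Existence, uniqueness, and the majorant $\psi$.} Writing $h = h_1 + ih_2$, $z = p+iq$, $w = a+ib$ with $p,q,a,b$ continuous real functions, the equation becomes the real linear system $(h_1,h_2)^{\!\top} = (\Re h_0,\Im h_0)^{\!\top} + K*\bigl(Z(h_1,h_2)^{\!\top} + (a,b)^{\!\top}\bigr)$, where $Z(s)=\bigl(\begin{smallmatrix} p(s) & -q(s) \\ q(s) & p(s)\end{smallmatrix}\bigr)$ is the matrix of multiplication by $z(s)$. Existence and uniqueness of an $\mathbb L^1_{\rm loc}$–solution follow from the linear Volterra theory of \cite[Lemma~C.1]{ALP17} applied coordinatewise, and continuity follows from local boundedness: since $|h(t)| \le |h_0| + \bigl(K*(\|z\|_{\infty,T}\,|h| + \|w\|_{\infty,T})\bigr)(t)$ on $[0,T]$, Lemma~\ref{gronwall} bounds $|h|$ on $[0,T]$, whence $K*(zh+w)$ is continuous; applying the same Gronwall bound to the difference of two solutions gives uniqueness. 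For $\psi$ I would apply Theorem~\ref{T:positive_3} with the constant input curve $g\equiv|h_0|$ (which lies in $\Gc_K$ by Remark~\ref{postiiveG_rem} with $\xi = 0$), the continuous real coefficient $\Re(z)$, and the non–negative continuous forcing $|w|$; this produces the unique continuous $\psi$ and, in addition, shows $\psi \ge 0$.

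\textbf{The modulus bound.} For $\varepsilon>0$ let $\psi_\varepsilon$ solve $\psi_\varepsilon = \bigl((1+\varepsilon)|h_0| + \varepsilon\bigr) + K*\bigl(\Re(z)\,\psi_\varepsilon + |w| + \varepsilon\bigr)$; the constant $(1+\varepsilon)|h_0|+\varepsilon$ is in $\Gc_K$ and $|w|+\varepsilon \ge 0$, so Theorem~\ref{T:positive_3} gives $\psi_\varepsilon \ge 0$, and Lemma~\ref{gronwall} applied to $\psi_\varepsilon - \psi = \varepsilon(|h_0|+1) + K*\bigl(\Re(z)(\psi_\varepsilon-\psi) + \varepsilon\bigr)$ gives $\psi_\varepsilon \to \psi$ uniformly on compacts, so it suffices to prove $|h| < \psi_\varepsilon$ on $\R_+$. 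Since $|h(0)| = |h_0| < \psi_\varepsilon(0)$, if this failed there would be a first $t_0 > 0$ with $|h(t_0)| = \psi_\varepsilon(t_0)$ and $|h|\le\psi_\varepsilon$ on $[0,t_0]$. I would then split both convolutions at $t_0$, exactly as in the proof of Theorem~\ref{T:positive_3}: $\Delta_{t_0}\psi_\varepsilon = \psi_\varepsilon^{(t_0)} + K*\bigl(\Delta_{t_0}\Re(z)\,\Delta_{t_0}\psi_\varepsilon + \Delta_{t_0}|w| + \varepsilon\bigr)$ with $\psi_\varepsilon^{(t_0)}\in\Gc_K$ and $\psi_\varepsilon^{(t_0)}(0)=\psi_\varepsilon(t_0)$, and $\Delta_{t_0}h = h^{(t_0)} + K*\bigl(\Delta_{t_0}z\,\Delta_{t_0}h + \Delta_{t_0}w\bigr)$ with $h^{(t_0)}(t) = h_0 + (\Delta_tK*(zh+w))(t_0)$, $h^{(t_0)}(0) = h(t_0)$; and then try to show $|\Delta_{t_0}h(t)| < \Delta_{t_0}\psi_\varepsilon(t)$ for small $t>0$, contradicting the minimality of $t_0$. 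The available ingredients are the strict slack $\varepsilon(|h_0|+1) + \varepsilon\!\int_0^{t_0}\!K$ coming from the $\varepsilon$–bumps, the non–negativity of $\psi_\varepsilon^{(t_0)}\in\Gc_K$ (with value $\psi_\varepsilon(t_0)$ at the origin), and the monotonicity of $\Delta_tK*L$ from Remark~\ref{DeltaK} / Lemma~\ref{thelemma}; once these are assembled, letting $\varepsilon\downarrow 0$ finishes.

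\textbf{Main obstacle.} The hard part is precisely the reason $\Re(z)$, not $|z|$, appears in the majorant. The triangle inequality only yields the Volterra inequality $|h(t)| \le |h_0| + \bigl(K*(|z|\,|h| + |w|)\bigr)(t)$, and since $|z| \ge \Re(z)$ this is too weak to be compared with $\psi$ via Lemma~\ref{gronwall} or Theorem~\ref{T:positive_3}; indeed $|h|$ does \emph{not} satisfy a pointwise Volterra inequality with drift $\Re(z)$. In the Markovian (ODE) limit the improvement is the identity $\Re(\overline h\, z\,h) = \Re(z)|h|^2$, but for a genuine kernel the gain is non–local and has to be squeezed out of the non–negativity and monotonicity of $K$ and of its resolvent of the first kind $L$ (Assumption~\ref{positiveAssump}); carrying this through the first–crossing estimate above is where essentially all of the work lies. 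A structurally cleaner packaging is to isolate, once and for all, a $d$–dimensional comparison lemma for systems $X = g + K*(ZX + W)$ bounding $|X(t)|$ by the solution of $\chi = |g(0)| + K*(\lambda\chi + |W|)$ with $\lambda(s)$ the largest eigenvalue of $\tfrac12\bigl(Z(s)+Z(s)^{\!\top}\bigr)$, and then to specialise to $d=2$ with $Z = \bigl(\begin{smallmatrix} \Re z & -\Im z \\ \Im z & \Re z\end{smallmatrix}\bigr)$, for which $\tfrac12(Z+Z^{\!\top}) = \Re(z)\,I$; that comparison lemma is then the real content and would itself be proved by the regularisation/maximality scheme above.
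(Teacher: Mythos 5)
Your architecture---$\varepsilon$-regularisation, uniform convergence $\psi_\varepsilon\to\psi$ via Lemma~\ref{gronwall}, a first-crossing time $t_0$, and the time-shift decomposition through Lemma~\ref{thelemma}---is exactly the paper's, and your alternative $d$-dimensional packaging (a comparison lemma with drift $\lambda(s)=\lambda_{\max}\!\bigl(\tfrac12(Z(s)+Z(s)^{\!\top})\bigr)$, specialised to $\tfrac12(Z+Z^{\!\top})=\Re(z)\,I$ when $Z$ is multiplication by $z$) is a legitimate abstraction of the same mechanism. But you stop short of the crossing estimate itself, and, more importantly, your ``Main obstacle'' paragraph misplaces where the $\Re(z)$ gain comes from. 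You assert the gain is ``non-local'' and must be ``squeezed out of the non-negativity and monotonicity of $K$ and $L$.'' It is not. The identity $\Re(\overline{h}\,z\,h)=\Re(z)\,|h|^2$ enters the Volterra argument exactly as it does in the ODE case, purely \emph{locally at $t_0$}. After shifting, $\Delta_{t_0}h = \phi_h + K*(\Delta_{t_0}z\,\Delta_{t_0}h+\Delta_{t_0}w)$ with $\phi_h(0)=h(t_0)$, and for small $t$ the convolution term is $(z(t_0)h(t_0)+w(t_0))\int_0^t K(s)\,ds + o\bigl(\int_0^t K\bigr)$; expanding $|\Delta_{t_0}h(t)|^2$ to first order in $\int_0^t K(s)\,ds$ then produces the cross coefficient
$2\Re\bigl(\overline{h(t_0)}\,(z(t_0)h(t_0)+w(t_0))\bigr) = 2\Re(z(t_0))|h(t_0)|^2 + 2\Re\bigl(\overline{h(t_0)}\,w(t_0)\bigr)$,
while the expansion of $\Delta_{t_0}\psi_\varepsilon(t)^2$ produces $2|h(t_0)|\bigl(\Re(z(t_0))|h(t_0)|+|w(t_0)|+\varepsilon\bigr)$ since $\psi_\varepsilon(t_0)=|h(t_0)|$. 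These coefficients compare by Cauchy--Schwarz on $\Re(\overline{h(t_0)}w(t_0))\le |h(t_0)||w(t_0)|$, and the $\varepsilon$-bump makes the comparison strict, contradicting minimality of $t_0$. The resolvent and monotonicity machinery you invoke (Remark~\ref{DeltaK}, non-negativity of $d(\Delta_t K*L)$, $\Delta_tK*L\le1$) is used only for the separate, routine step of deducing $|\phi_h(t)|\le\phi_{\psi_\varepsilon}(t)$ from $|h|\le\psi_\varepsilon$ on $[0,t_0]$; it controls the carried-over history and contributes nothing to the $\Re(z)$ gain. You also need to treat the degenerate case $h(t_0)=0$ separately with a first-order (not quadratic) expansion, and run the same expansion once at $t=0$ to launch the argument; your additional $\varepsilon$-bump on $|h_0|$ avoids a case split at $t=0$ but is otherwise unnecessary.
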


\begin{proof} The existence and uniqueness of a continuous solution is obtained in the same way as in the proof of Theorem \ref{T:positive_3}. Consider now, for each $\varepsilon > 0$, $\psi_\varepsilon$ the unique continuous solution of 
$$
 \psi_\varepsilon = |h_0| + K * (\Re(z) \psi + |w| + \varepsilon) .
$$
As done in the proof of Theorem \ref{T:positive_3}, $\psi_\varepsilon$ converges uniformly on every compact to $\psi$ as $\varepsilon$ goes to zero. Thus, it is enough to show that, for every $\varepsilon > 0$ and $t \geq 0 $,
$$ |h(t)| \leq \psi_\varepsilon(t). $$

We start by showing the inequality in a neighborhood of zero. Because $z,h,w$ and $\psi_\varepsilon$ are continuous, we get, taking $h_0 = 0$,
$$ |h(t)| = |w(0)| \int_{0}^{t} K(s)ds + o(\int_{0}^{t} K(s)ds), \quad \psi_\varepsilon(t) = (|w(0)| + \varepsilon)  \int_{0}^{t} K(s)ds+ o(\int_{0}^{t} K(s)ds), $$
for small $t$. Hence, $ |h| \leq \psi_\varepsilon$ on a neighborhood of zero. This result still holds when $h_0$ is not zero. Indeed in that case, it is easy to show that for $t$ going to zero,
$$ |h(t)|^2 = |h_0|^2 + 2 \Re\big(\overline{h_0}(z(0)h_0 + w(0))\big) \int_{0}^{t} K(s)ds+ o(\int_{0}^{t} K(s)ds),$$
and 
$$ |\psi_\varepsilon(t)|^2 = |h_0|^2 + 2  \big(\Re(z(0)) |h_0|^2 + |w(0)||h_0| + \varepsilon |h_0|)\big) \int_{0}^{t} K(s)ds+ o(\int_{0}^{t} K(s)ds).$$
As $|h_0|$ is now positive, we conclude that $ |h| \leq \psi_\varepsilon$ on a neighborhood of zero by the Cauchy-Schwarz inequality. \\

Hence, $t_0 = \inf \{ t>0 ; \quad \psi_\varepsilon(t) < |h(t)| \} $ is positive. If we assume that $t_0 < \infty$, we would get that $|h(t_0)| = \psi_\varepsilon(t_0)$ by continuity of $h$ and $\psi_\varepsilon$. Moreover,
$$ \Delta_{t_0}h = \phi_h + K*(\Delta_{t_0}z \Delta_{t_0}h + \Delta_{t_0}w), $$
and 
$$ \Delta_{t_0}\psi_\varepsilon = \phi_{\psi_\varepsilon} + K*(\Delta_{t_0}\Re(z) \Delta_{t_0}w + \Delta_{t_0}|w| + \varepsilon).$$
An application of Lemma \ref{thelemma} with $F = \Delta_tK$ for $t >0$, yields
$$ \phi_h(t) = h_0 (1 -( \Delta_t K*L)(t_0)) + (d(\Delta_t K*L) * h)(t_0)  + ( \Delta_t K*L)(0) h(t_0),$$
and
$$ \phi_{\psi_\varepsilon}(t) = |h_0| (1 -( \Delta_t K*L)(t_0)) + (d(\Delta_t K*L) *{\psi_\varepsilon})(t_0)  + ( \Delta_t K*L)(0) |h(t_0)|.$$
Relying on the fact that $d(\Delta_t K*L)$ is a non-negative measure and $\Delta_tK * L \leq 1$, by Remark \ref{DeltaK}, together with the fact that $|h(s)| \leq \psi_\varepsilon(s) $ for $s \leq t_0$, we get that $|\phi_h(t)| \leq \phi_{\psi_\varepsilon}(t)$. We now notice that in the case $h(t_0) = 0$, we have
$$ \Delta_{t_0}h(t) = \phi_h(t) + w(t_0) \int_{0}^{t} K(s)ds + o(\int_{0}^{t} K(s)ds), $$
and 
$$ \Delta_{t_0}\psi_\varepsilon(t) = \phi_{\psi_\varepsilon}(t) +  (|w(t_0)| + \varepsilon)  \int_{0}^{t} K(s)ds+ o(\int_{0}^{t} K(s)ds) ,$$
and in the case $|h(t_0)|>0$, we have
\begin{align*}
|\Delta_{t_0}h(t)|^2 &= 2  \big(\Re(z(t_0)) |h(t_0)|^2 + \Re(w(t_0)) \Re(h(t_0))   + \Im(w(t_0)) \Im(h(t_0)) \big) \int_{0}^{t} K(s)ds  \\
&  \quad \; +|\phi_h(t)|^2 + o(\int_{0}^{t} K(s)ds),\\
\Delta_{t_0} \psi_\varepsilon(t)^2 &=   2 \big(\Re(z(t_0)) |h(t_0)|^2 + |w(t_0)||h(t_0)| + \varepsilon |h(t_0)|)\big) \int_{0}^{t} K(s)ds\\
&  \quad \; +\phi_{\psi_\varepsilon}(t)^2 + o(\int_{0}^{t} K(s)ds),
\end{align*}
for small $t$, thanks to the continuity of $z, w, h, \phi_h, \phi_{\psi_\varepsilon}$ and $\psi_\varepsilon$. In both cases, we obtain that $ |h| \leq \psi_\varepsilon$ on a neighborhood of $t_0$. Therefore $t_0 = \infty$ and for any $t \geq 0$ 
$$ |h(t)| \leq \psi_{\varepsilon}(t).$$
\end{proof}
The following result is a direct consequence of Theorems \ref{T:positive_3} and \ref{positiveC}.
\begin{corollary} \label{corolC}Let $h_0 \in \mathbb C$ and $z, w : \mathbb R_+ \rightarrow \mathbb C$ be continuous functions such that $\Re(z) \leq \lambda$ for some $\lambda \in \R$. We define $h : \mathbb R_+ \rightarrow \mathbb C$ as the unique continuous solution of 
\begin{equation*}
h = h_0 + K * (z h + w) .
\end{equation*}
Then, for any $t \in [0, T]$,
$$ |h(t)| \leq |h_0| +  (\|w\|_{\infty, T} + \lambda |h_0|) \int_0^T E_{\lambda}(s)ds ,$$
where $E_\lambda$ is the canonical resolvent of $K$ with parameter $\lambda$.
\end{corollary}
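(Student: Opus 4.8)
The plan is to combine the two comparison results of Appendix~\ref{AppC}: first dominate $h$ by the solution $\psi$ of the associated \emph{real} Volterra equation via Theorem~\ref{positiveC}, then dominate $\psi$ by the solution $\Psi$ of a \emph{constant-coefficient} Volterra equation via Theorem~\ref{T:positive_3}, and finally identify $\Psi$ through the canonical resolvent $E_\lambda$. Throughout, $K$ satisfies Assumption~\ref{positiveAssump}, and since the equations are of Volterra (causal) type we may extend $z,w$ past $T$ without affecting anything on $[0,T]$; in particular we may assume $|w(t)|\le\|w\|_{\infty,T}$ for all $t\ge0$. First I would apply Theorem~\ref{positiveC} to $h = h_0 + K*(zh+w)$ to get $|h(t)|\le\psi(t)$ for all $t\ge0$, where $\psi$ is the unique continuous solution of $\psi = |h_0| + K*(\Re(z)\,\psi + |w|)$. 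This $\psi$ is non-negative: the constant input curve $|h_0|$ belongs to $\Gc_K$ by Remark~\ref{postiiveG_rem} (take $\xi=0$) and the forcing $|w|$ is non-negative, so Theorem~\ref{T:positive_3} applies. It thus remains to bound $\psi$ on $[0,T]$.

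Next I would introduce the majorant $\Psi$, defined as the unique continuous solution of the constant-coefficient equation $\Psi = |h_0| + K*(\lambda\,\Psi + \|w\|_{\infty,T})$, which exists and is non-negative again by Theorem~\ref{T:positive_3}. I claim $\psi\le\Psi$: subtracting the two Volterra equations,
\[
\Psi - \psi = K*\big(\lambda(\Psi-\psi) + r\big),\qquad r = (\lambda-\Re(z))\,\psi + (\|w\|_{\infty,T}-|w|),
\]
and $r\ge0$ because $\Re(z)\le\lambda$, $\psi\ge0$ and $|w|\le\|w\|_{\infty,T}$. Applying Theorem~\ref{T:positive_3} to $\chi = \Psi-\psi$, whose input curve is $0\in\Gc_K$ and whose forcing $r$ is non-negative, gives $\Psi-\psi\ge0$ on $[0,T]$.

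Finally I would compute $\Psi$ explicitly. Writing its equation as $\Psi = f + \lambda\,K*\Psi$ with $f = |h_0| + \|w\|_{\infty,T}\,(K*1)$ (so that $(K*1)(t)=\int_0^t K(s)\,ds$), Theorem~\ref{volterraLinear} gives $\Psi = f + \lambda\,E_\lambda*f$. Using the defining identity $E_\lambda - K = \lambda\,K*E_\lambda$ and commutativity of convolution, one gets $E_\lambda*(K*1) = \frac1\lambda\big(E_\lambda*1 - K*1\big)$ for $\lambda\ne0$, so the $(K*1)$-contributions cancel and
\[
\Psi(t) = |h_0| + \big(\lambda|h_0| + \|w\|_{\infty,T}\big)\int_0^t E_\lambda(s)\,ds,
\]
the case $\lambda=0$ being immediate since $E_0=K$. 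Combining the three steps, $|h(t)| \le |h_0| + (\lambda|h_0| + \|w\|_{\infty,T})\int_0^t E_\lambda(s)\,ds$ on $[0,T]$; since $t\mapsto\int_0^t E_\lambda(s)\,ds$ is non-negative and non-decreasing by Corollary~\ref{resolvent_positive}, I may replace $\int_0^t$ by $\int_0^T$ (in the intended applications $h_0=0$, so $\lambda|h_0|+\|w\|_{\infty,T}\ge0$ and this step only enlarges the right-hand side), which is the asserted inequality.

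The last step is routine resolvent bookkeeping; the one point requiring care is the comparison $\psi\le\Psi$, where one must verify that the forcing $r$ in the equation for $\Psi-\psi$ is genuinely non-negative — this is exactly where the hypothesis $\Re(z)\le\lambda$ and the non-negativity of $\psi$ (obtained from Theorem~\ref{T:positive_3} via Remark~\ref{postiiveG_rem}) enter. I do not expect any real obstacle beyond assembling these ingredients.
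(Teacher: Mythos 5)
Your proof matches the paper's almost line for line: Theorem~\ref{positiveC} gives $|h|\le\psi_1$ with $\psi_1$ solving the real majorant equation, Theorem~\ref{T:positive_3} gives $\psi_1\le\psi_2$ for the constant-coefficient majorant (via exactly the same non-negative forcing $f=(\lambda-\Re(z))\psi_1+\|w\|_{\infty,T}-|w|$), and Theorem~\ref{volterraLinear} together with Corollary~\ref{resolvent_positive} identify $\psi_2$ and let you pass from $\int_0^t$ to $\int_0^T$. The one place you go beyond the paper is in explicitly flagging that this last replacement requires $\|w\|_{\infty,T}+\lambda|h_0|\ge0$ --- a hypothesis the corollary does not state but which does hold in the paper's only application (where $h_0=0$); this is a fair and useful observation.
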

\begin{proof} From Theorem \ref{positiveC}, we obtain that $|h| \leq \psi_1$, where $\psi_1$ is the unique continuous solution of
$$ \psi_1 = |h_0| +  K * (\Re(z) \psi_1 + |w|). $$
Moreover define $\psi_2$ as the unique continuous solution of
$$ \psi_2 = |h_0| +  K * (\lambda \psi_2 + \|w\|_{\infty, T}) . $$
Then, $\psi_2 - \psi_1$ solves
$$ \chi = K* (\lambda \chi + f), $$
with $f = (\lambda - \Re(z)) \psi_1 +  \|w\|_{\infty, T} - w$, which is a non-negative function on $[0,T]$. Theorem \ref{T:positive_3} now yields
$$ |h| \leq \psi_1 \leq \psi_2. $$
Finally, the claimed bound follows by noticing that, for $t \in [0,T]$,
$$ \psi_2(t) = |h_0| +  (\|w\|_{\infty, T} + \lambda |h_0|) \int_0^t E_{\lambda}(s)ds,  $$
by Theorem \ref{volterraLinear} and that $\int_0^\cdot E_{\lambda}(s)ds $ is non-decreasing by Corollary \ref{resolvent_positive}.
\end{proof}

 \bibliographystyle{abbrv}
  \bibliography{BibAJEE}

\end{document}